\documentclass[10pt]{amsart}
\usepackage[latin1]{inputenc}
\usepackage{amsmath, amsthm, amssymb}

\usepackage{hyperref}

\usepackage{fancyhdr}
\usepackage{cleveref}
\usepackage{graphicx}

\numberwithin{equation}{section}
\numberwithin{figure}{section}

\allowdisplaybreaks
\tolerance=2400

\theoremstyle{plain} \newtheorem{theorem}{Theorem}[section]
\theoremstyle{plain} \newtheorem{lemma}[theorem]{Lemma}
\theoremstyle{plain} \newtheorem{prop}[theorem]{Proposition}
\theoremstyle{plain}\newtheorem{cor}[theorem]{Corollary}
\theoremstyle{plain} 
\theoremstyle{plain} 
\theoremstyle{definition} \newtheorem{defn}[theorem]{Definition}
\theoremstyle{remark} \newtheorem{rem}[theorem]{Remark}
\theoremstyle{remark} \newtheorem{ex}[theorem]{Example}
\theoremstyle{plain} \newtheorem{question}[theorem]{Question}

\DeclareSymbolFont{bbold}{U}{bbold}{m}{n}
\DeclareSymbolFontAlphabet{\mathbbold}{bbold}
\newcommand{\ind}{\mathbbold{1}}

\DeclareMathOperator{\Prb}{\mathbb{P}}

\def\cJ{\mathcal{J}}
\def\cK{\mathcal{K}}
\def\cP{\mathcal{P}}
\def\cS{\mathcal{S}}
\def\cT{\mathcal{T}}
\def\cW{\mathcal{W}}

\def\DD{\mathbf{D}}
\def\EE{\mathbf{E}}

\def\LL{\mathbf{L}}

\def\TT{\mathbf{T}}
\def\SS{\mathbf{S}}

\def\VV{\mathbf{V}}
\def\WW{\mathbf{W}}

\def\bC{\mathbb{C}}
\def\bE{\mathbb{E}}
\def\bN{\mathbb{N}}
\def\bP{\mathbb{P}}
\def\bC{\mathbb{C}}
\def\bR{\mathbb{R}}
\def\bS{\mathbb{S}}
\def\bT{\mathbb{T}}
\def\bU{\mathbb{U}}
\def\bZ{\mathbb{Z}}

\begin{document}

\title[Recovering a tree from subtree lengths]
{Recovering a tree from the lengths of subtrees spanned by a
randomly chosen sequence of leaves}

\author{Steven N. Evans}
\address{Department of Statistics\\
         University  of California\\ 
         367 Evans Hall \#3860\\
         Berkeley, CA 94720-3860 \\
         U.S.A.}

\email{evans@stat.berkeley.edu}
\thanks{SNE supported in part by NSF grant DMS-0907630
and NIH grant 1R01GM109454-01.}

\author{Daniel Lanoue}
\address{Department of Mathematics\\
         University  of California\\
         970 Evans Hall \#3840\\
         Berkeley, CA 94720-3840\\
         U.S.A.}

\email{dlanoue@math.berkeley.edu}

\subjclass[2010]{05C05, 05C60, 05C80}

\keywords{tree reconstruction, 
graph isomorphism, 
phylogenetic diversity, 
random tree}

\date{\today}

\begin{abstract}
Given an edge-weighted tree $\TT$ with $n$ leaves, 
sample the leaves uniformly at random
without replacement and let $W_k$, $2 \le k \le n$, 
be the length of the subtree spanned by the first $k$ leaves.  We consider
the question, ``Can $\TT$ be identified (up to isomorphism) 
by the joint probability distribution of the random vector $(W_2, \ldots, W_n)$?''
We show that if $\TT$ is known {\em a priori} to belong to one of
various families of edge-weighted trees, then the answer is, ``Yes.''  
These families include the edge-weighted trees with edge-weights in general
position, the ultrametric edge-weighted trees, and certain families
with equal weights on all edges such as $(k+1)$-valent and rooted $k$-ary 
trees for $k \ge 2$ and caterpillars.
\end{abstract}

\maketitle

\tableofcontents

\section{Introduction}
\label{sec:Introduction}

\subsection{Background and motivation}
What features of an edge-weighted tree identify it
uniquely up to isomorphism, perhaps within
some class of such trees?  Here
an {\em edge-weighted tree}
is a connected, acyclic finite graph $\TT$ with vertex set 
$\VV(\TT)$ and edge set $\EE(\TT)$ which is equipped with
a function $\WW_\TT: \EE(\TT) \to \bR_{++} := (0,\infty)$.  
The value of $\WW_\TT(e)$
for an edge $e \in \EE(\TT)$ is called the {\em weight} or the
{\em length} of $e$.  Two such trees $\TT'$ and $\TT''$
are isomorphic if there is a bijection 
$\sigma: \VV(\TT') \to \VV(\TT'')$ such that:
\begin{itemize}
\item
$\{u,v\} \in \EE(\TT')$ 
if and only if $\{\sigma(u), \sigma(v)\} \in \EE(\TT'')$,
\item
$\WW_{\TT'}(\{u,v\}) = \WW_{\TT''}(\{\sigma(u), \sigma(v)\})$
for all $\{u,v\} \in \EE(\TT')$.
\end{itemize}
The question above is, more formally, one of asking
for a given class of edge-weighted trees $\bT$ about the possible
sets $\bU$ and functions $\Phi: \bT \to \bU$ such that
for all $\TT',\TT'' \in \bT$ we have
$\Phi(\TT') = \Phi(\TT'')$ if and only if $\TT'$
and $\TT''$ are isomorphic.  If the class $\bT$ consists of
edge-weighted trees for which all edges have length $1$
(we will call such objects {\em combinatorial trees} for the sake
of emphasis), then determining whether two trees in $\bT$
are isomorphic is just a particular case of the
standard graph isomorphism problem.  The general graph
isomorphism problem has been the subject of
a large amount of work in combinatorics and 
computer science -- \cite{MR0485586} already
speaks of the ``graph isomorphism disease'' --
and, in particular, there are many results
on reconstructing the isomorphism type of a graph
from the isomorphism types of subgraphs of various sorts
(see, for example, the review \cite{MR1161466}).
There is also a substantial volume of somewhat parallel research 
on graph isomorphism in computational chemistry (see, for example,
\cite{MR3052391} for a  review).  There seems to be
considerably less work on determining isomorphism
(in the obvious sense) of edge-weighted graphs; of course, in order
for two edge-weighted graphs to be isomorphic the underlying
combinatorial graphs must be isomorphic, but this does not imply
that the best way for checking that two edge-weighted graphs
are isomorphic proceeds by first determining whether the
underlying combinatorial graphs are isomorphic and then somehow
testing whether some isomorphism of the combinatorial graphs
is still an isomorphism when the edge-weights are considered.

We begin with a discussion of previous results that
address various aspects of the problem of determining
when two edge-weighted or combinatorial trees are isomorphic.

A result in \cite{MR0332540} gives the following
 criterion for a bijection
$\sigma: \VV(\TT') \to \VV(\TT'')$, where $\TT'$
and $\TT''$ are combinatorial trees, to be an isomorphism: \\
if $v_0, v_1, \ldots, v_m$ is any sequence from
$\VV(\TT') \sqcup \VV(\TT'')$ such that
$v_0 = v_m$
and
\[
\{v_i, v_j\} \in \EE(\TT') 
\sqcup \EE(\TT'') 
\sqcup \{\{u,\sigma(u)\} : u \in \VV(\TT')\}
\Longleftrightarrow i - j \equiv \pm 1 \mod m,
\]
then $m=4$.

The above result is elegant, but, of course, one does not need to apply
it to all possible bijections to determine whether two
combinatorial trees are isomorphic: there is a
much more explicit and efficient procedure, which we now
describe for the sake of completeness.  First of all,
suppose that $\TT'$ and $\TT''$ have distinguished vertices
$\rho'$ and $\rho''$ and, in addition to the requirements
in the above definition of
an isomorphism $\sigma$, we require that $\sigma$
maps $\rho'$ to $\rho''$; that is, we have rooted trees
and we require that an isomorphism maps the root of one
tree to the root of the other.  The presence of a root
allows us to think of a combinatorial tree as a directed
graph, where the head of an edge is the vertex that is closer
to the root and the tail is the vertex farther from the root.
The children of a vertex are the adjacent vertices that are
farther from the root and, more generally, the descendants
of a vertex $u$ are those vertices $v$ such that the path from
the root to $v$ passes through $u$.  The subtree spanned by
a vertex $u$ and its descendants contains no other vertices
and can be thought of as a combinatorial tree rooted at $u$,
and we call this subtree the subtree below $u$.
Then, two rooted, combinatorial trees $\TT'$ and $\TT''$ 
are isomorphic if the two roots
have the same number of children, say $m$, and there is
an ordering of these children for each tree such that
the subtree below the $i^{\mathrm{th}}$ child of
the root of $\TT'$ is isomorphic (as a rooted, combinatorial
tree) to the subtree below the $i^{\mathrm{th}}$ child of
the root of $\TT''$.  This observation can be turned into
an efficient  algorithm (see, for example, \cite[Example 3.2]{MR0413592}).
Now, two combinatorial trees are isomorphic if there
is some choice of roots such the resulting rooted, combinatorial
trees are isomorphic.  A {\em center} of a combinatorial tree
is a vertex $c$ such that 
\[
\max_{v \in \VV(\TT)} r_\TT(c,v) 
=
\min_{u \in \VV(\TT)} \max_{v \in \VV(\TT)} r_\TT(u,v),
\]
where $r_\TT(u,v)$ is the number of edges in he unique
path between $u$ and $v$ for $u,v \in \VV(\TT)$, and
a combinatorial tree has either a unique center or two centers that
are adjacent.  It is therefore possible to determine if
two combinatorial trees are isomorphic by rooting each of them at
their various centers and checking if any two such rooted, 
combinatorial trees are isomorphic.

We, however, are interested in whether there are ``statistics'' of
a more numerical character that can be used to decide tree isomorphism.
For combinatorial trees, one somewhat obvious possibility is the 
multiset of eigenvalues of some matrix associated with the tree such
as the adjacency matrix or the distance matrix.  Unfortunately, the
results of \cite{schwenk, MR1231010,
MR750401, MR1609509, matsen2011ubiquity, MR2956206} show that
not only is the isomorphism type of a tree not uniquely determined by
the spectrum of its adjacency matrix but for various
ensembles of combinatorial trees if one picks a tree uniformly at random
from those in the ensemble with $n$ vertices, then the probability
there is another tree in the ensemble with an adjacency matrix
that has the same spectrum converges to one as $n \to \infty$.
The results of \cite{matsen2011ubiquity} can be used to
show that an analogous phenomenon is present when one considers the
spectrum of the matrix of leaf-to-leaf distances.

Two trees have adjacency matrices with
the same spectrum if and only if the characteristic polynomials of
the adjacency matrices are equal.  Given some irreducible representation
of the symmetric group on the number of letters equal to the dimension
of a square matrix, the immanantal polynomial of the matrix is
constructed in the same manner as the characteristic polynomial
except that the determinant is replaced by a similarly defined
object for which the sign character is replaced by the character
of the representation.  One might hope that the immanantal polynomials 
are more successful at deciding isomorphism of combinatorial trees,
but a result of \cite{MR1231010} shows that if the adjacency matrices
of two combinatorial trees have the same characteristic polynomials,
then they have the same immanantal polynomials for every irreducible
representation.  We note that \cite{MR0228370} already contains
an example of two combinatorial trees with adjacency matrices
that are explicitly shown to have the same immanantal polynomial.

The greedoid Tutte polynomial of a combinatorial tree $\TT$
encodes for each $i$ and $\ell$ the number of subtrees
of $\TT$ that have $i$ internal vertices and $\ell$ leaves.
It was conjectured in \cite{MR1369283} that this information
identifies the isomorphism type of a combinatorial tree.
However, it was shown in \cite{MR2234989} that there are infinitely many
pairs of nonisomorphic caterpillars that share the
same greedoid Tutte polynomial: a caterpillar is 
a combinatorial tree that consists of some number
of internal vertices along a single path and leaves
that are each adjacent to one of the internal vertices.
This contrasts with the situation for rooted, combinatorial
trees; it is shown in \cite{MR967486} that there is
a two-variable polynomial defined for all rooted, directed
graphs (and hence, in particular, for rooted, combinatorial
trees) such that two rooted, combinatorial trees
have the same polynomial if and only if they are isomorphic.
The polynomial in \cite{MR967486} is defined recursively,
but it is not hard to see that it encodes in a compact manner the total
number of vertices in the tree, the number of children
of the root, the number of vertices in each of the subtrees
below the children of the root, and so on.

The chromatic symmetric function of a graph was introduced
in \cite{MR1317387}.  A proper coloring of a finite graph
is a function $\kappa$ from the vertices of the graph to $\bN$ such
that adjacent vertices are assigned different values.
We can introduce an equivalence relation on the proper colorings
by declaring that two colorings $\kappa'$ and $\kappa''$
are equivalent if there is a bijection $\pi : \bN \to \bN$
such that $\kappa'' = \pi \circ \kappa'$.  For a graph
with $m$ vertices, each equivalence
class gives rise to a partition 
$\lambda_1 \ge \lambda_2 \ge \ldots \ge \lambda_k > 0$ of $m$ by taking, 
for any $\kappa$ in the equivalence class,
$\lambda_i$ to be the $i^{\mathrm{th}}$ largest of the cardinalities
$\#\{v : \kappa(v) = j\}$ as $j$ ranges over $\bN$.  The
chromatic symmetric function encodes for each partition of $m$
the number of equivalence classes of colorings that give rise
to that partition.  It was conjectured in \cite{MR1317387}
that nonisomorphic combinatorial 
trees have distinct chromatic symmetric functions.
It was shown in \cite{MR2382514} that this conjecture is true
for caterpillars and that paper also reports on computational
results verifying that the conjecture holds for the class
of trees with at most $23$ vertices.  Further work
related to the conjecture for the special
case of trees with a single centroid is contained in \cite{MR3147202}.

Our point of departure in this paper is the well-known fact 
\cite{Zaretskii_65, MR0237362, buneman1971recovery, MR0363963} that
an edge-weighted tree can be reconstructed from its matrix
of leaf-to-leaf distances (see \cite{felsenstein} for
an indication of the importance of this observation
in the statistical reconstruction of phylogenetic trees).
In fact, an edge-weighted tree with $n$ leaves can be reconstructed from the
collection of total lengths of subtrees spanned by all
subsets of $m$ leaves provided $n \ge 2m-1$ \cite{MR2064171}.
We remark that the total length of the subtree spanned by a set of leaves
is an important quantity in phylogenetics where it is called the phylogenetic
diversity of the corresponding set of taxa \cite{MR2359353}.

Given these results, one might imagine that the multiset of 
leaf-to-leaf distances suffices to identify 
the isomorphism type of an edge-weighted tree.  This is
certainly not the case.  For example, consider the
two combinatorial caterpillars $\TT'$ and $\TT''$ with $28$ leaves each, where
$\TT'$ has $3$ internal vertices $a',b',c'$ in order along a path that are adjacent respectively to $2,11,12$ leaves, and
$\TT''$ has $3$ internal vertices  $a'',b'',c''$ in order along a path that are
adjacent respectively to $3,14,8$ leaves.  Taking the $\binom{25}{2}$ pairs of distinct leaves in $\TT'$, 
we see that the distance $2$ appears 
$\binom{2}{2} + \binom{11}{2} + \binom{12}{2} = 122$ times,
the distance $3$ appears $2 \times 11 + 11 \times 12 = 154$ times,
and the distance $4$ appears $2 \times 12 = 24$ times.  Similarly,
taking the $\binom{25}{2}$ pairs of distinct leaves in $\TT''$, 
we see that the distance $2$ appears 
$\binom{3}{2} + \binom{14}{2} + \binom{8}{2} = 3 + 91 + 28 = 122$ times,
the distance $3$ appears $3 \times 14 + 14 \times 8 = 154$ times,
and the distance $4$ appears $3 \times 18 = 24$ times.
Probabilistically, we have just shown that if we pick two leaves
uniformly at random without replacement
from an edge-weighted tree, then the isomorphism
type of the tree is not uniquely identified by the probability
distribution of the distance between the two leaves.

Note in this last example that if we looked at the multisets of
lengths of subtrees spanned by three leaves, then we would see the length $3$
appearing $\binom{11}{3}+\binom{12}{3} = 335$ times for $\TT'$
and $\binom{3}{3} + \binom{14}{3}+\binom{8}{3} = 421$ times for $\TT''$,
and hence the probability distribution of the length of the subtree
spanned by three leaves chosen uniformly at random is not the same
for the two trees.

In order to proceed further, we need to introduce some
more notation.
Write $\LL(\TT)$ for the set of leaves
of an edge-weighted tree $\TT$.  Given
a subset $K$ of $\LL(\TT)$, let $\WW_\TT(K)$ be the length of the
subtree spanned by $K$; that is, $\WW_\TT(K)$ is the sum of the lengths
of the edges in the smallest connected subgraph of $\TT$ with a vertex set that
contains $K$.

It is possible to calculate the total length of $\TT$, 
that is, $\WW_\TT(\LL(\TT))$, using the following result from
\cite{MR2053839} that extends one for the
special case of $3$-valent trees in \cite{pauplin2000direct}.
Write $d_\TT(v)$ for the degree of an interior vertex $v$ of $\TT$
(that is, $v \in \VV(\TT) \setminus \LL(\TT)$).  For distinct leaves
$x,y \in \LL(\TT)$ denote by $I_\TT(x,y)$ the set of interior
vertices on the unique path in $\TT$ between $x$ and $y$ and
put 
\[
h_\TT(x,y) := \prod_{v \in I_\TT(x,y)} ((d_\TT(v) - 1)!)^{-1}.
\]
Let $r_\TT(x,y)$ be the sum of the lengths of the edges in
the path between $x$ and $y$.  Then,
\[
\WW_\TT(\LL(\TT)) 
= 
\sum_{\{x,y\} \subseteq \LL(\TT), x \ne y} h_\TT(x,y) r_\TT(x,y).
\]
Of course, a similar formula gives  $\WW_\TT(K)$ for any 
$K \subseteq \LL(\TT)$; the path
between a pair of leaves of the subtree is the same
as the path between them in $\TT$, the length of this
path is the same in the subtree as it is in $\TT$,
but the degree of an interior
vertex of the subtree can be less
than its degree as an interior vertex of $\TT$.

Suppose that $\# \LL(\TT) = n$
and $Y_1, \ldots, Y_n$ is a uniformly
distributed random listing of $\LL(\TT)$;
that is, $Y_1, \ldots, Y_n$ is the result
of sampling the leaves of $\TT$ uniformly
at random without replacement.  Set 
$W_k := \WW_\TT(\{Y_1, \ldots, Y_k\})$
for $2 \le k \le n$; that is, the random
variable $W_k$ is the length of the
subtree spanned by the first $k$ of the
randomly chosen leaves.  We write
$\cW_\TT$ for the $(n-1)$-dimensional random vector
$(W_2, \ldots, W_n)$ and call this random vector 
the {\em random length sequence} of $\TT$. 

In this paper we address the following question.

\begin{question}{Can we reconstruct the edge-weighted tree
$\TT$ up to isomorphism from the
joint probability distribution of the random length sequence $\cW_{\TT}$?}
\label{q:main}
\end{question}

Another way of framing this question is the following.
Write $y_1, \ldots, y_n$ for the leaves of $\TT$
and let $\cJ_\TT$ be the multiset with cardinality $n!$ that results
from listing the $(n-1)$-dimensional vectors 
\[
(\WW_\TT(\{y_{\pi(1)},y_{\pi(2)}\}), 
\WW_\TT(\{y_{\pi(1)},y_{\pi(2)},y_{\pi(3)}\}), 
\ldots, \WW_\TT(\{y_{\pi(1)}, \ldots ,y_{\pi(n)}\}))
\]
as $\pi$ ranges of the permutations of $[n]:=\{1,\ldots,n\}$.
We stress that $\cJ_\TT$ is a multiset; that is, we do not know which
increasing sequences of lengths go with which ordered listings of
the leaves.

\begin{question}{ Can we reconstruct the edge-weighted tree $\TT$ 
up to isomorphism from the multiset of length sequences $\cJ_\TT$?}
\end{question}

We end this section with some remarks about the problem of reconstructing trees
from various so-called {\em decks}, as this subject has some similarities
to the questions we consider.  In \cite{MR0120127}, Ulam asked whether it
is possible to reconstruct the isomorphism type of a graph 
with at least $3$ vertices from the isomorphism types of the subgraphs
obtained by deleting each of the vertices.  This question was resolved
in the affirmative for combinatorial trees in \cite{MR0087949}.  Moreover,
later results established that it is not necessary to know the forests obtained
by deleting every vertex.  For example, it was shown in \cite{MR0200190}
that it suffices to know the subtrees obtained by deleting leaves.
This latter result was strengthened in \cite{MR0256926}, where it
was found that it is only necessary to know which nonisomorphic forests
are obtained and not what the multiplicity of each isomorphism type is,
and in \cite{MR0260614}, where it was shown
that it suffices to take only those leaves $p$ that are {\em peripheral}
in the sense that
\[
\max_{v \in \VV(\TT)} r_\TT(p,v) 
= 
\max_{v \in \VV(\TT)} \max_{v \in \VV(\TT)} r_\TT(u,v).
\]
Along the same lines, it was established in \cite{MR680306} 
that it is enough to take only 
the nonleaf vertices, provided that there are at least three of them.
The line of inquiry in \cite{MR786484} is the most similar to ours:
an example was presented of two trees for which the respective sets of
vertices may be paired up
in such a way that for each pair
the sizes of the trees in the forests produced by removing
each element of the pair from its tree are the same, 
and a necessary and sufficient condition was given
for a tree to be uniquely reconstructible from this sort of data, which the
authors of \cite{MR786484} call the {\em number deck} of the tree.


\subsection{Overview of the main results}

We will answer Question~\ref{q:main} in the affirmative for 
a few different classes of trees.  
Some classes will have general edge-weights and some classes
will be combinatorial trees.
It is clear that in the case of general edge-weights we must
restrict to trees that have no vertices
with degree $2$ because otherwise we can subdivide
any edge into arbitrarily many edges with the same total
length and the joint probability distribution of the random length sequence
will be unchanged -- see \Cref{fig:NonSimple cex}.   We call such trees 
{\em simple}.  The terms irreducible or homomorphically irreducible are also
used in the literature. 


\begin{figure}[ht]
    \centering
    \includegraphics[width=0.4\textwidth]{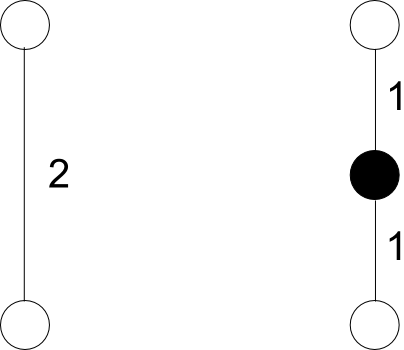}
    \caption{Two non-isomorphic edge-weighted trees that 
    cannot be distinguished by the joint probability distribution
    of their random length sequences.}
    \label{fig:NonSimple cex}
\end{figure}

 
Our first result is for the the class of {\em stars}; that is,
edge-weighted trees with $n \ge 3$ leaves that have
a single interior vertex.  Note that such trees are simple.
For any edge-weighted tree with $n$ leaves, $W_n$ is a constant
(the total length of the tree) and $W_n - W_{n-1}$ is a uniformly
distributed random pick from the lengths of the $n$ edges that are
adjacent to one of the leaves.  The following simple result is immediate
from this observation.

\begin{theorem}{For $n \ge 3$ the isomorphism type of
a star is uniquely determined 
by the joint probability distribution of its random length sequence.}
\label{thrm:main star}
\end{theorem}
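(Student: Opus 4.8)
The plan is to read off the isomorphism type from the single coordinate difference $W_n - W_{n-1}$, exactly as suggested by the observation immediately preceding the statement. First, note that the random vector $\cW_\TT = (W_2,\dots,W_n)$ takes values in $\bR^{n-1}$, so the number of leaves $n$ is already determined by the joint law; we may therefore treat $n$ as known.

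Second, I would use the special structure of a star. Writing $c$ for its unique interior vertex, every edge has the form $\{c,y\}$ for some leaf $y$, so every edge is pendant; moreover, for any $K\subseteq\LL(\TT)$ the subtree spanned by $K$ is just the union of the edges $\{c,y\}$ with $y\in K$, whence $\WW_\TT(K) = \sum_{y\in K}\WW_\TT(\{c,y\})$. In particular $W_n$ is the deterministic constant $\WW_\TT(\LL(\TT))$, and $W_n - W_{n-1} = \WW_\TT(\{c, Y_n\})$ is the length of the pendant edge at the last sampled leaf. Since $Y_n$ is a uniformly chosen leaf, the law of the real random variable $W_n - W_{n-1}$ is the uniform distribution on the multiset $M$ consisting of the $n$ edge lengths of $\TT$.

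Third, the marginal law of $W_n - W_{n-1}$ is a functional of the joint law of $\cW_\TT$, and a uniform distribution on a multiset of known cardinality $n$ determines that multiset, i.e.\ its distinct values together with their multiplicities. Finally, two stars are isomorphic if and only if they have the same multiset of edge lengths: any isomorphism must send interior vertex to interior vertex and hence restricts to a weight-preserving bijection of the pendant edges, and conversely any weight-matching bijection of the leaves extends to such an isomorphism. Thus $M$, together with $n = \#M$, pins down the star up to isomorphism, which completes the argument.

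The proof has no real obstacle; the only points that deserve a line of justification are that for a star every edge is pendant --- so that $W_n - W_{n-1}$ samples the full edge-length multiset and not merely a proper sub-multiset --- and that the multiset of edge lengths is a complete isomorphism invariant within the class of stars. Both follow at once from the definitions.
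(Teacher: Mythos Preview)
Your proposal is correct and follows exactly the approach the paper itself indicates: the theorem is declared immediate from the observation that $W_n - W_{n-1}$ is a uniform pick from the multiset of pendant edge lengths, and you have simply spelled out that observation and the trivial fact that this multiset is a complete isomorphism invariant for stars.
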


The simple trees with two leaves all consist of a single edge and have
a random length sequence $(W_2)$, where $W_2$ is the length of that edge,
and so the isomorphism type of such a tree is uniquely determined
by the joint probability distribution of
its random length sequence.  The simple trees with three leaves
are stars, and it follows from \Cref{thrm:main star} that the isomorphism
type of such a tree is uniquely determined
by the the joint probability distribution of its random length sequence.

We next consider simple, edge-weighted trees with four leaves.

\begin{theorem}{For $2 \le n \le 4$, the isomorphism type
of a simple, edge-weighted tree 
$\TT$ with $n$ leaves is uniquely determined 
by the joint probability distribution of its random length sequence.}
\label{thrm:main n4}
\end{theorem}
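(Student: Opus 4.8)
Since the cases $n = 2$ and $n = 3$ were already settled in the remarks following \Cref{thrm:main star}, the content is the case $n = 4$, and the plan is as follows. First I would note that a simple tree with four leaves has one of exactly two topologies: the $4$-leaf star, or the binary ``quartet'' tree consisting of two interior vertices of degree $3$ joined by a single edge, each bearing two pendant leaves. Indeed, if the tree has $k$ interior vertices it has $3 + k$ edges, so the interior degrees sum to $2(3+k) - 4 = 2k + 2$; since every interior vertex of a simple tree has degree at least $3$, this forces $k \le 2$, with $k = 1$ giving the star and $k = 2$ the quartet tree. So the task is to recover, from the joint law of $\cW_\TT = (W_2, W_3, W_4)$, the topology together with the edge weights up to the relevant symmetry.

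Next I would extract the coarse parameters. As recalled just before \Cref{thrm:main star}, $W_4$ is almost surely the constant $L := \WW_\TT(\LL(\TT))$ (the subtree spanned by all leaves is all of $\TT$), while $W_4 - W_3$ is a uniform draw from the four pendant-edge lengths; hence $L$ and the pendant-length multiset $P = \{p_1, p_2, p_3, p_4\}$ are read off from the law of $\cW_\TT$. Put $m := L - (p_1 + p_2 + p_3 + p_4)$. Because the star has no interior edge whereas the quartet tree has exactly one, of positive length $m$, we have $m = 0$ precisely in the star case and $m > 0$ precisely in the quartet case, so the topology is determined; and in the star case $\TT$ is determined up to isomorphism by $P$, which finishes that case.

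For the remaining case $m > 0$, the isomorphism type of the quartet tree $\TT$ is determined by $m$, the multiset $P$, and the unordered partition of the pendant leaves into the two cherries, and I would use the marginal law of $W_2$ to recover the last ingredient. Computing pairwise leaf distances, the distance between leaves $i$ and $j$ equals $p_i + p_j$ when $\{i,j\}$ is one of the two cherries and $p_i + p_j + m$ for the other four pairs, and $\{Y_1,Y_2\}$ is uniform over the six pairs. Writing $\Sigma$ for the known multiset of the six sums $p_i + p_j$ and $\{s, s'\}$ for the two cherry sums (so $s + s' = p_1 + \cdots + p_4$ is already known), the law of $W_2$ is obtained from $\Sigma$ by adding $m$ to the four non-cherry entries. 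A short computation of the first three moments of $W_2$, using $s + s' = p_1 + \cdots + p_4$, shows that the first two moments are the same for all three candidate partitions while the third moment determines $s^2 + s'^2$; combined with the known value of $s + s'$ this recovers the multiset $\{s, s'\}$. Finally one verifies that two partitions of $P$ into pairs have the same multiset of cherry sums if and only if they yield isomorphic quartet trees: the only way $\{p_i+p_j, p_k+p_l\}$ and $\{p_i+p_k, p_j+p_l\}$ can agree as multisets is $p_j = p_k$ or $p_i = p_l$, and in either case the two partitions of the multiset $P$ coincide. Hence $\{s, s'\}$ pins down $\TT$ up to isomorphism.

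The step I expect to be the crux is this last one --- ruling out that two non-isomorphic quartet trees with the same pendant multiset and interior-edge length share the same law of $W_2$ --- and it is exactly here that the third-moment identity, together with the elementary analysis of when two pairwise sums of the $p_i$ can coincide, does the work. Everything else is routine bookkeeping with spanned-subtree lengths.
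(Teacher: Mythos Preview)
Your argument is correct and agrees with the paper through the recovery of the topology, the pendant multiset $P$, and the interior edge length $m$ from $(W_3,W_4)$. Where you diverge is in the quartet case: the paper proceeds by a case split on the shape of the multiset $P$ (namely $\{a,a,a,a\}$, $\{a,a,a,b\}$, $\{a,a,b,b\}$, $\{a,a,b,c\}$, $\{a,b,c,d\}$) and in each case inspects specific point masses of $W_2$ to tell the candidate pairings apart. You instead give a single, case-free argument via moments of $W_2$: the first two moments depend only on $P$, $m$, and the known quantity $s+s'=\sum_i p_i$, while the third moment isolates $s^2+s'^2$ (through the cross term $-3m(s^2+s'^2)$ after expanding $\sum (t_j+m)^3$), and then $\{s,s'\}$ determines the partition of $P$ up to isomorphism by the elementary observation that $\{p_i+p_j,p_k+p_l\}=\{p_i+p_k,p_j+p_l\}$ forces $p_j=p_k$ or $p_i=p_l$. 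Your route is cleaner and uniform, at the cost of a small moment computation; the paper's route is more hands-on and makes the distinguishing events explicit, at the cost of several subcases. Both are valid proofs of the same statement.
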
 

The proof of this result is via consideration of possible cases.  
Similar proofs could be attempted for larger numbers of leaves,
but the main reason we include the result is to show how such a proof
for even a small number of leaves leads to quite a few cases and
because we will need the case of four leaves later.

It is well-known that any simple, combinatorial tree with labeled
leaves can be reconstructed
from the simple, combinatorial trees spanned by each subset of 
four leaves (the so-called quartets) \cite[Theorem~6.3.7]{MR2060009}.
With this and \Cref{thrm:main n4} in mind, one might imagine that the
isomorphism type of simple, edge-weighted tree can be determined
from the joint probability distribution of $(W_2, W_3, W_4)$. 
However, putting such a strategy into practice
would seem to be rather complicated because there can be two sets
of leaves $\{y_1', y_2', y_3', y_4'\}$
and
$\{y_1'', y_2'', y_3'', y_4''\}$
such that
$\{y_1', y_2', y_3', y_4'\} \ne  \{y_1'', y_2'', y_3'', y_4''\}$ 
but
$\WW_\TT(\{y_1', y_2'\}) = \WW_\TT(\{y_1'', y_2''\})$, 
$\WW_\TT(\{y_1', y_2', y_3'\}) = \WW_\TT(\{y_1'', y_2'', y_3''\})$, 
and
$\WW_\TT(\{y_1', y_2', y_3',y_4'\}) = \WW_\TT(\{y_1'', y_2'', y_3'',y_4''\})$.
One way of ruling out such annoying algebraic coincidences is to assume that
the edge-weighted tree $\TT$ has {\em edge-weights in general position}, by which we
mean that the sums of the lengths of any two different 
(not necessarily disjoint) subsets of edges of $\TT$ are not equal. 

\begin{theorem}{The isomorphism type
of a simple, edge-weighted tree $\TT$ with edge-weights 
in general position is uniquely determined by the 
joint probability distribution of its random length sequence.}
\label{thrm:main generalposition}
\end{theorem}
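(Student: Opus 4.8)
The plan is to argue by induction on the number of leaves $n=\#\LL(\TT)$, with the base cases $n\le 4$ already supplied by \Cref{thrm:main n4}. The target of the induction is to recover the full matrix of leaf-to-leaf distances of $\TT$; once that matrix is known, the classical reconstruction theorem recalled in \Cref{sec:Introduction} \cite{Zaretskii_65, MR0237362, buneman1971recovery, MR0363963} identifies $\TT$ up to isomorphism.

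For the inductive step I would take $n\ge 5$ and first extract coarse data from the law of $\cW_\TT=(W_2,\dots,W_n)$. The coordinate $W_n$ is almost surely the total length of $\TT$, and, as observed before \Cref{thrm:main star}, $W_n-W_{n-1}$ is a uniform pick from the $n$ pendant-edge lengths; general position makes these lengths $\ell_1<\dots<\ell_n$ distinct, so from the marginal law of $W_n-W_{n-1}$ we recover $n$ and the set $\{\ell_1,\dots,\ell_n\}$. Write $v_i$ for the leaf whose pendant edge has length $\ell_i$. The crucial point is that the event $\{W_n-W_{n-1}=\ell_i\}$ coincides with $\{Y_n=v_i\}$ and has positive probability $1/n$, and that conditioning on it turns $(W_2,\dots,W_{n-1})$ into the random length sequence of the tree $\TT^{(i)}$ obtained from $\TT$ by deleting $v_i$ together with its pendant edge and then suppressing the (at most one) degree-$2$ vertex thereby created. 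Indeed, removing the leaf $v_i$ leaves the spanning subtree of any set of remaining leaves unchanged, except possibly that a two-edge path through the suppressed vertex is collapsed to a single edge of the same length, so $\WW_{\TT^{(i)}}$ agrees with $\WW_\TT$ on subsets of $\LL(\TT^{(i)})$. One also checks that $\TT^{(i)}$ is again simple and has edge-weights in general position, since every subset of $\EE(\TT^{(i)})$ pulls back injectively and weight-preservingly to a subset of $\EE(\TT)$. Hence the inductive hypothesis applies to $\TT^{(i)}$ and recovers it up to isomorphism, so we learn its pendant lengths and its leaf-to-leaf distances.

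The step I expect to be the main obstacle is then reassembling $\TT^{(1)},\dots,\TT^{(n)}$ into the distance matrix of $\TT$: each $\TT^{(i)}$ is only known up to isomorphism, so one must match their leaf sets consistently across different $i$. The device is to use pendant lengths as canonical leaf labels. Writing $\ell^{(i)}_j$ for the pendant length of $v_j$ inside $\TT^{(i)}$ ($j\ne i$), one has $\ell^{(i)}_j=\ell_j$ unless $v_j$ and $v_i$ share a neighbour $w$ with $d_\TT(w)=3$, in which case $\ell^{(i)}_j=\ell_j+\WW_\TT(e)$ where $e$ is the third edge at $w$. General position forces any such exceptional value to lie outside $\{\ell_1,\dots,\ell_n\}$, and since $n\ge 5$ at most one leaf of $\TT^{(i)}$ can be exceptional (two leaf-neighbours at a degree-$3$ vertex would make $\TT$ a three-leaf star). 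Therefore the leaves of $\TT^{(i)}$ carrying pendant lengths in $\{\ell_1,\dots,\ell_n\}$ are identified outright with the matching $v_j$, and the possible remaining leaf is $v_{j_0}$ for the unique index $j_0\ne i$ with $\ell_{j_0}\notin\{\ell^{(i)}_j:j\ne i\}$. This yields $r_\TT(v_j,v_k)$, correctly indexed, for every pair $\{j,k\}$ with $i\notin\{j,k\}$; letting $i$ vary over indices outside a given pair recovers every entry of the distance matrix and closes the induction.

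The genuine work is confined to this last paragraph: tracking how the suppression step perturbs pendant lengths, and repeatedly invoking general position to preclude collisions between perturbed and unperturbed lengths. The degenerate small cases — most notably $n=3$, where deleting a leaf produces a single edge whose two leaves are not separated by pendant length — are exactly the reason the induction is anchored at $n\le 4$ via \Cref{thrm:main n4} rather than run all the way down.
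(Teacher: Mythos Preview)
Your argument is correct and takes a genuinely different route from the paper's proof.

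The paper does not induct. Instead it locates a pair of sibling leaves $x_1,x_2$ (two leaves whose pendant edges meet at a common vertex) by searching for $\ell',\ell''$ in the set of pendant lengths with $\bP\{W_2=\ell'+\ell''\}>0$; general position guarantees that the event $\{W_2=\ell'+\ell''\}$ is exactly $\{\{Y_1,Y_2\}=\{x_1,x_2\}\}$. Conditioning on that event, the distribution of $W_3-W_2$ gives the distances $\ell_3,\dots,\ell_n$ from the common neighbour of $x_1,x_2$ to each remaining leaf (these are distinct by general position, so they serve as labels), and then the joint law of $(W_3,W_4)$ on the same event reveals, for each unordered pair $\{x_i,x_j\}$, the length of the internal edge in the quartet spanned by $x_1,x_2,x_i,x_j$, hence $r_\TT(x_i,x_j)$. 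This reconstructs the full distance matrix directly, without induction and without invoking \Cref{thrm:main n4}.

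Your approach instead conditions on the \emph{last} leaf, reducing to the deleted tree $\TT^{(i)}$, and recovers the distance matrix from the ``deck'' $\{\TT^{(i)}\}_i$ by using pendant lengths as canonical leaf labels. Each approach leans on general position at a different point: the paper uses it to make subtree weights inject on leaf sets of sizes $2,3,4$; you use it to make pendant lengths inject both in $\TT$ and after the single edge-merger in each $\TT^{(i)}$. Your proof is conceptually clean and highlights the leaf-deletion structure, at the cost of needing \Cref{thrm:main n4} as a base case; the paper's proof is self-contained and shows more precisely that only the joint law of $(W_2,W_3,W_4,W_{n-1},W_n)$ is needed.
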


The last family of edge-weighted trees with general edge-weights
whose elements we can identify up to isomorphism
from the joint probability distributions of their random length sequences
is the class of {\em ultrametric} trees. For the
sake of completeness, we now define this class.
Recall that for leaves $i, j \in \LL(\TT)$ we denote by $r_\TT(i, j)$ the
distance between them; that is, $r_\TT(i,j)$ is the sum of the lengths
of the edges on the unique path between $i$ and $j$. 
The edge-weighted tree $\TT$ is ultrametric if for any leaves $i, j, k \in \LL(\TT)$ we have
$$
r_\TT(i, k) \leq r_\TT(i, j) \vee r_\TT(j, k),
$$
from which it follows that for any leaves $i, j, k \in \LL(\TT)$
at least two of the distances 
$r_\TT(i, j)$, $r_\TT(i, k)$, and $r_\TT(j, k)$ are equal
while the third is no greater than that common value.
Equivalently, an edge-weighted tree $\TT$
is ultrametric if, when it is thought of as a real tree 
(that is, a metric space where the edges are treated as real intervals of varying lengths given by their edge-weights 
-- see, for example, 
\cite{MR2351587}), then there is a (unique) point $\rho$ 
 called the root (which may be in the interior of an edge)
 such that the distance from $\rho$ to a leaf is the same for all leaves.
We will make use of both definitions.  It is immediate from 
the former definition that the subtree of an ultrametric tree 
spanned of a subset of leaves is itself ultrametric. 

\begin{theorem}{The isomorphism type of 
an ultrametric, simple, edge-weighted tree $\TT$ 
is uniquely determined by the joint probability distribution of
its random length sequence.}
\label{thrm:main ultrametric}
\end{theorem}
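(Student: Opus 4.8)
The plan is to reduce the question, via a formula special to ultrametric trees, to a statement about the sequence of nearest-neighbour distances obtained by inserting the leaves one at a time in uniform random order, and then to reconstruct $\TT$ by induction on the number of leaves, peeling off the top-level split of $\TT$ at its ultrametric root $\rho$. So first I would establish the following identity. Regard $\TT$ as a real tree rooted at $\rho$, set $W_1 := 0$, and for $2 \le k \le n$ write $\delta_k := \min_{1 \le i < k} r_\TT(Y_i, Y_k)$ for the distance from $Y_k$ to the nearest previously sampled leaf and $R_{k-1} := \tfrac12 \max_{1 \le i < j \le k-1} r_\TT(Y_i, Y_j)$ for half the diameter of $\{Y_1, \dots, Y_{k-1}\}$ (with $R_1 := 0$). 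A short computation with the Steiner tree of $\{Y_1, \dots, Y_k\}$ — the branch joining $Y_k$ to the subtree spanned by $Y_1, \dots, Y_{k-1}$ meets that subtree at height $\tfrac12 \delta_k$ above the leaves when $\delta_k \le 2R_{k-1}$, and otherwise at its root, after which one must also add the new stretch of edge up to height $\tfrac12\delta_k$ — gives
\[
W_k - W_{k-1}
=
\begin{cases}
\tfrac12 \delta_k, & \delta_k \le 2 R_{k-1}, \\
\delta_k - R_{k-1}, & \delta_k > 2 R_{k-1},
\end{cases}
\qquad
R_k = R_{k-1} \vee \tfrac12\delta_k .
\]
Since $R_{k-1}$ is a deterministic function of $W_2, \dots, W_{k-1}$ via this recursion, the vector $(\delta_2, \dots, \delta_n)$ is a deterministic function of $\cW_\TT$, so it suffices to show that the joint law of $(\delta_2, \dots, \delta_n)$ determines $\TT$ up to isomorphism.

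Now let $2R := \operatorname{diam}(\TT)$, which is the maximum of the support of $\delta_2 = r_\TT(Y_1,Y_2)$; let $L_1, \dots, L_m$ be the leaf sets of the connected components of $\TT \setminus \{\rho\}$; and for each $a$ let $\TT_a$ be the subtree of $\TT$ spanned by $L_a$, with $n_a := |L_a|$ and $R_a := \tfrac12\operatorname{diam}(\TT_a)$, so that $R_a < R$. Since distances between leaves in distinct $L_a$ all equal $2R$ while distances within a single $L_a$ are $< 2R$, we have $\delta_k = 2R$ exactly when $Y_k$ is the first sampled leaf of its block, and otherwise $\delta_k$ is the nearest-neighbour distance of $Y_k$ within the sub-process restricted to its block. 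In particular exactly $m-1$ of the entries $\delta_2, \dots, \delta_n$ equal $2R$ regardless of the sampling order (one for each block except the block of $Y_1$), so $m$ is determined; and conditionally on the block-membership sequence $(B_1, \dots, B_n)$, where $B_i$ is the block containing $Y_i$ and which is a uniformly random arrangement of the multiset in which each $a$ appears $n_a$ times, the within-block orderings are independent and uniform, so the law of $(\delta_2,\dots,\delta_n)$ is obtained by uniformly interleaving the per-block nearest-neighbour sequences $\Delta^{(a)}$ (each prefixed by a $2R$). Conditioning on the event $\{\delta_2 < 2R, \dots, \delta_j < 2R\}$ that the first $j$ sampled leaves lie in a single block, and reading off its probability, recovers $\sum_a n_a^{\underline{j}}$ for every $j$, hence — $m$ being known — the multiset $\{n_1, \dots, n_m\}$ by a standard symmetric-function (Newton's identities) argument.

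It remains to recover the subtrees. By conditioning on events of the form ``the first $n_{a_1}$ sampled leaves form one block, the next $n_{a_2}$ form another, and so on'' for prescribed block sizes, one extracts from the law of $(\delta_2, \dots, \delta_n)$, for each value $s$ of block size, the uniform symmetrized tensor product $\tfrac1{r!}\sum_{\pi \in S_r} \bigotimes_i \operatorname{Law}(\Delta^{(a_{\pi(i)})})$ over the blocks $a_1, \dots, a_r$ of size $s$. An auxiliary lemma — that a uniform symmetrized tensor product of probability measures determines the multiset of those measures, provable by extracting the symmetrized joint moments $\prod_i \int f_i\,d\mu_{a_i}$ and reassembling the assignment consistently across test functions, with a perturbation argument for ties — then recovers the multiset $\{\operatorname{Law}(\Delta^{(a)}): 1 \le a \le m\}$. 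By the induction hypothesis applied to each $\TT_a$ (which has fewer than $n$ leaves, is ultrametric, and is simple once the at most one degree-two vertex that can occur at its root is suppressed, a change that leaves the random length sequence unchanged), each $\TT_a$ is identified up to isomorphism, and with it $R_a$. Since $R$ is also known, $\TT$ is reconstructed up to isomorphism by attaching the $\TT_a$ to a common root $\rho$ by edges of lengths $R - R_a$ when $m \ge 3$, and by joining the roots of $\TT_1$ and $\TT_2$ by a single edge of length $2R - R_1 - R_2$ when $m = 2$; the cases $n \le 2$ are immediate.

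The step I expect to be the main obstacle is the de-interleaving in the last two paragraphs: passing to the nearest-neighbour sequence and reading off $2R$, $m$, and the block sizes is essentially mechanical, but recovering the individual subtrees when several top-level blocks share the same number of leaves requires showing that the uniform random shuffle of the per-block sequences retains the information needed to tell those blocks apart, which is exactly what the lemma on symmetrized tensor products of measures, together with a careful choice of conditioning events, is meant to supply.
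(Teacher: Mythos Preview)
Your route is genuinely different from the paper's, so let me compare and then flag the real gap.

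\textbf{The paper's argument.} The paper never uses probabilities at all: it shows that the \emph{lexicographically minimal} element $(\ell_2,\ldots,\ell_n)$ of $\cJ_\TT$ already determines $\TT$. The induction is on $k$: knowing the ultrametric real subtree $\TT_k$ spanned by the first $k$ leaves (and a planar embedding with those leaves left-to-right), one argues that the next attachment point must lie on the rightmost arc $[\rho_k,x_k]$, because otherwise one could swap $x_{k+1}$ into an earlier slot and produce a lexicographically smaller sequence. This yields a clean greedy reconstruction and, as a bonus, the stronger conclusion in \Cref{rem:ultrametric min enough}.

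\textbf{Your argument.} Your reduction to the nearest-neighbour increments $(\delta_2,\ldots,\delta_n)$ is correct and pleasant: the bijection between $\cW_\TT$ and $\delta$ via the recursion $R_k=R_{k-1}\vee\tfrac12\delta_k$ is exactly right for ultrametric trees, and recovering $2R$, $m$, and the multiset $\{n_1,\ldots,n_m\}$ from $\sum_a n_a^{\underline j}$ works as you describe. The new content you would contribute is a ``de-mixing'' proof that uses the full law rather than just a single extremal sequence.

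\textbf{The gap.} The step that actually fails as written is the conditioning: the event ``the first $n_{a_1}$ sampled leaves form one block, the next $n_{a_2}$ form another, \ldots'' is \emph{not} measurable with respect to $(\delta_2,\ldots,\delta_n)$ in general. Concretely, with blocks of sizes $3$ and $2$, the $\delta$-pattern $(<2R,\,=2R,\,<2R,\,<2R)$ is produced both by $(Y_1,\ldots,Y_5)$ filling the size-$2$ block first and by orderings that start with two leaves of the size-$3$ block, jump to the size-$2$ block, and then return. So you cannot simply ``condition on that event'' using the law of $\delta$. The fix is to prescribe the block sizes in \emph{nonincreasing} order: if $s_1\ge s_2\ge\cdots\ge s_m$, then the $\delta$-event ``$\delta_k=2R$ exactly at $k=s_1+1,\,s_1+s_2+1,\ldots$'' \emph{does} coincide with the block event, because after each run the just-filled block is necessarily of maximal remaining size and hence exhausted before the next $2R$. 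With that ordering your symmetrized-tensor extraction goes through. The auxiliary lemma you invoke---that $\tfrac{1}{r!}\sum_{\pi}\bigotimes_i\mu_{\pi(i)}$ determines the multiset $\{\mu_1,\ldots,\mu_r\}$---is true (for $r=2$ it follows from knowing, for all measurable $A,B$, both $\{\mu_1(A),\mu_2(A)\}$ and $\mu_1(A)\mu_2(B)+\mu_2(A)\mu_1(B)$, which pins down the pairing; the general case is a short induction), but it is not entirely trivial and you should state and prove it rather than gesture at it.

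In short: your plan can be completed, but only after repairing the observability of the conditioning event and supplying the symmetrized-product lemma; the paper's lexicographic-minimum argument avoids both issues and in fact proves more, needing only the support of $\cW_\TT$.
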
 

\begin{rem}
{The proof of \Cref{thrm:main ultrametric} establishes an even stronger result.  Namely, the isomorphism type
of an ultrametric, simple, edge-weighted tree $\TT$ 
is uniquely determined by the minimal element of $\cJ_\TT$ in the lexicographic order.}
\label{rem:ultrametric min enough}
\end{rem}

\begin{rem}
{We call attention to a subtle point in the statements of
\Cref{thrm:main generalposition} and
 \Cref{thrm:main ultrametric}.  Both results say that if we
are given the joint probability distribution of the random length
 sequence of an edge-weighted tree $\TT$ -- information
 that certainly includes the number of leaves of $\TT$ --
 and we know, a priori, that
 $\TT$ has a certain extra property (edge-weights in general position
 or ultrametricity), then we can determine the isomorphism type of $\TT$.
 The theorems do not, however, say whether it is possible to determine from
 the joint probability distribution of its random length
 sequence whether a simple, edge-weighted tree $\TT$ has its edge-weights
 in general position or is ultrametric.  
 We do not have results that settle this question, but we say
some more about it in \Cref{rem:more ultrametric determination} and believe it
 is an interesting area for future research.}
\label{rem:ultrametric determination}
\end{rem}

Observe that if $\TT$ is an edge-weighted tree, $a$ is any vertex of $\TT$, and $c$ is
a constant such that $c \ge \max\{r_\TT(a,i) : i \in \LL(\TT)\}$, then
$\tilde r_\TT: \LL(\TT) \times \LL(\TT) \to \bR_+$ defined by
\[
\tilde r_\TT(i,j) := c + \frac{1}{2}(r_\TT(i,j) - r_\TT(a,i) - r_\TT(a,j)), \quad i \ne j,
\]
and
\[
\tilde r_\TT(i,i) := 0,
\]
is an ultrametric on $\LL(\TT)$ that arises from suitable edge-weights on $\TT$.  The metric
$\tilde r_\TT$ is often called the {\em Farris transform} of $r_\TT$ -- see 
\cite{MR2311928} for a review of the many appearances of this object 
in various areas from phylogenetics to metric geometry.
It might be hoped that an affirmative answer to \Cref{q:main} for general
edge-weighted trees will follow from \Cref{thrm:main ultrametric}.  However,
we have been unable to find an argument which shows that the joint probability distribution of
the random length sequence of the tree $\TT$ equipped with the new edge-weights is determined by the joint probability distribution of the random length sequence for the original edge-weights.

Suppose that $\TT$ is a rooted, simple combinatorial tree with root $\rho$. We can define a partial order on $\VV(\TT)$
by declaring that that $x \le y$ if $x$ is on the unique path from $\rho$ to $y$.  Two vertices $x,y \in \VV(\TT)$ have a unique
greatest lower bound in this partial order that we write as $x \wedge y$ and call the {\em most recent common ancestor} of $x$ and $y$.
The map $\hat r_\TT: \LL(\TT) \times \LL(\TT) \to \bR_+$ defined by
\[
\hat r_\TT(i,j) := \#\{k \in \LL(\TT) : i \wedge j < k\}
\]
is an ultrametric on $\LL(\TT)$ and hence it arises from a collection of edge-weights $\hat \WW_\TT$ on $\TT$.
A directed edge $(x,y)$ in $\TT$ with $x \le y$ is necessarily of the form $x = i \wedge j = i \wedge k$ and
$y = j \wedge k$ for some $i,j,k \in \LL(\TT)$.  If $e = \{x,y\}$ is the corresponding undirected edge, then
\[
\begin{split}
\hat \WW_\TT(e) 
& = \frac{1}{2}(\hat r_\TT(i,j) - \hat r_\TT(j,k)) \\
& = \frac{1}{2}(\#\{\ell \in \LL(\TT) : x < \ell\} - \#\{\ell \in \LL(\TT) : y < \ell\}). \\
\end{split}
\]
Therefore, if $\TT'$ is a subtree of $\TT$ spanned by some set of leaves $K \subseteq \LL(\TT)$
and $\DD(\TT')$ is the set of directed edges of $\TT'$, then we have that the length of $\TT'$ is
\[
\begin{split}
\hat \WW_\TT(K) 
& = \frac{1}{2}\sum_{(x,y) \in \DD(\TT')}\left(\sum_{\ell \in \LL(\TT)} \ind\{x < \ell\} - \ind\{y < \ell\}\right) \\
& = \frac{1}{2} \#\{((x,y),\ell) \in \DD(\TT') \times \LL(\TT) : x < \ell, \, y \not < \ell\}. \\
\end{split}
\]
The following result is immediate from \Cref{thrm:main ultrametric} and \Cref{rem:ultrametric min enough}.

\begin{cor}
{The isomorphism type of 
a simple, combinatorial tree $\TT$ 
is uniquely determined by the minimal element of the set $\cJ_\TT$ of length sequences
obtained after designating a root for $\TT$ and equipping
$\TT$ with the edge-weights $\hat \WW_\TT$.}
\end{cor}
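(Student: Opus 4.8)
The plan is to treat the Corollary as an essentially immediate consequence of \Cref{rem:ultrametric min enough} (which strengthens \Cref{thrm:main ultrametric}), once we have verified the one point that is not literally contained in the discussion preceding the statement: that the pair $(\TT, \hat\WW_\TT)$ constructed there is a genuine ultrametric, simple, edge-weighted tree. The passage from $\TT$ to $(\TT, \hat\WW_\TT)$ leaves the underlying combinatorial graph untouched, so $(\TT, \hat\WW_\TT)$ is simple because $\TT$ is; and $\hat r_\TT$ has already been noted to be an ultrametric on $\LL(\TT)$, realized by the edge-weights $\hat\WW_\TT$ given by the displayed formula. So the only thing left to check is that $\hat\WW_\TT(e) > 0$ for every edge $e$ of $\TT$, which is precisely what places $(\TT, \hat\WW_\TT)$ in the class covered by \Cref{thrm:main ultrametric}.

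For this I would use the structural remark recalled in the text that a directed edge $(x,y)$ with $x < y$ always has the form $x = i \wedge j$, $y = j \wedge k$ for suitable leaves $i, j, k$. Since $x = i \wedge j$ is a strict ancestor of the leaf $i$ while $y$ lies on the path from $x$ down to $j$ (and $i$, diverging from $j$ already at $x$, does not), we get a leaf $i$ with $x < i$ but $y \not< i$; as $\{\ell \in \LL(\TT) : y < \ell\} \subseteq \{\ell \in \LL(\TT) : x < \ell\}$, this yields $\#\{\ell \in \LL(\TT) : x < \ell\} \geq \#\{\ell \in \LL(\TT) : y < \ell\} + 1$ and hence $\hat\WW_\TT(e) \geq \frac{1}{2}$. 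One caveat worth flagging explicitly is that the designated root must be an interior vertex: for a simple tree with at least three leaves this is automatic unless one roots at a leaf, in which case the edge incident to the root would receive weight $0$ and the construction would break; the tree with two leaves is a single edge and is handled trivially.

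With $(\TT, \hat\WW_\TT)$ recognized as an ultrametric, simple, edge-weighted tree, \Cref{rem:ultrametric min enough} applies directly: its isomorphism type \emph{as an edge-weighted tree} is determined by the minimal element of $\cJ_\TT$ in the lexicographic order. Forgetting edge-weights turns an isomorphism of edge-weighted trees into an isomorphism of the underlying combinatorial trees, so the isomorphism type of the combinatorial tree $\TT$ is a function of the isomorphism type of $(\TT, \hat\WW_\TT)$, and hence is itself determined by that minimal element — concretely, two rooted, simple, combinatorial trees carrying their respective weights $\hat\WW$ that produce the same minimal length sequence are isomorphic as edge-weighted trees and therefore isomorphic as combinatorial trees. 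I do not anticipate a genuine obstacle here; beyond the positivity check and the choice-of-root caveat above, the only point worth noting is that the minimal element of $\cJ_\TT$ already records the number of leaves of $\TT$ (it is the ambient dimension of the vectors out of which $\cJ_\TT$ is built), so trees with different leaf counts automatically have different minimal length sequences and the comparison is never vacuous.
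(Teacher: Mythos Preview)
Your proposal is correct and takes essentially the same approach as the paper, which simply declares the corollary ``immediate from \Cref{thrm:main ultrametric} and \Cref{rem:ultrametric min enough}'' without further argument. Your verification that $\hat\WW_\TT(e) > 0$ for every edge and your caveat about rooting at an interior vertex fill in genuine details the paper glosses over; in particular, your observation that rooting at a leaf would assign weight $0$ to the incident edge (for $n \ge 3$) is a real point that the paper's setup tacitly assumes away.
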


We now turn our focus to combinatorial trees 
and drop the assumption of simplicity. That is,
all edge-weights are equal to one and there
may be vertices with degree two.
We answer Question~\ref{q:main} in the affirmative
for two families of combinatorial trees.

\begin{figure}[ht]
    \centering
    \includegraphics[width=0.4\textwidth]{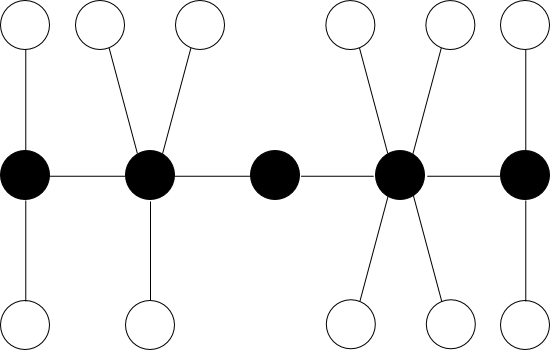}
    \caption{A caterpillar tree.  Removing the leaves (white vertices) results in a path of length $5$ (black vertices).}
    \label{fig:Cat ex}
\end{figure}

First, a combinatorial tree $\TT$ is a {\em caterpillar} if the deletion of the 
leaves along with the edges adjacent to them results in a path with $\ell+1$
vertices (and hence $\ell$ edges) -- see, for example,
\Cref{fig:Cat ex}.  Choose some direction for the
path and number from $0$ to $\ell$
the vertices on the path encountered successively 
in that direction  and write $n_i$ for the number
of leaves adjacent to the vertex numbered $i$. Note that
$n_0 \ge 1$ and $n_\ell \ge 1$.  Two sequences $n_0', \ldots, n_{\ell'}'$
and $n_0'', \ldots, n_{\ell''}''$ correspond to isomorphic trees if and only
if $\ell' = \ell'' = \ell$, say, and either $n_i' = n_i''$, $0 \le i \le \ell$
or $n_i' = n_{\ell-i}''$, $0 \le i \le \ell$.

\begin{theorem}{The isomorphism type of a caterpillar is uniquely 
determined by the joint probability distribution of its random length
 sequence. Furthermore, it is possible to determine from the joint probability
distribution of the random length sequence of a combinatorial tree whether
the tree is a caterpillar.}
\label{thrm:main caterpillar}
\end{theorem}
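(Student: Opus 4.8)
\emph{Reduction to a one--dimensional range process.} Let $\TT$ be a caterpillar with spine vertices numbered $0,1,\dots ,\ell$ as in the statement, with $n_i\ge 0$ leaves attached to vertex $i$ (so $n_0,n_\ell\ge 1$), and put $n=\sum_i n_i$. If $K$ is a set of $k$ leaves that occupy the set of spine positions $S\subseteq\{0,\dots ,\ell\}$, then the subtree spanned by $K$ consists of the $k$ pendant edges together with the spine segment from $\min S$ to $\max S$, so that $\WW_\TT(K)=k+(\max S-\min S)$. Consequently, writing $c_1,\dots ,c_n$ for the spine positions of $Y_1,\dots ,Y_n$, we have $W_k=k+R_k$ with $R_k:=\max(c_1,\dots ,c_k)-\min(c_1,\dots ,c_k)$. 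Thus the joint law of $\cW_\TT$ is precisely the law of the range process $(R_2,\dots ,R_n)$ of a uniformly random shuffle of a word with letter multiplicities $(n_0,\dots ,n_\ell)$, and the task is to recover the composition $(n_0,\dots ,n_\ell)$ up to reversal from this law.

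\emph{Recovering the ambient constants and the block multiset.} From the dimension of $\cW_\TT$ one reads off $n$, and from the a.s.\ value $W_n=n+\ell$ one reads off $\ell$. Since a subtree spanned by $k$ leaves has exactly $k$ edges if and only if all $k$ leaves are attached to one common spine vertex, $\Prb(W_k=k)=\sum_i\binom{n_i}{k}\big/\binom{n}{k}$, so the law determines $\sum_i\binom{n_i}{k}$ for every $k$ and hence (together with the now--known value $\ell+1$) all power sums of the finite multiset $\{n_0,\dots ,n_\ell\}$, i.e.\ that multiset itself. More generally $W_k=k+j$ holds exactly when the $k$ leaves occupy some window $\{i,i+1,\dots ,i+j\}$ with both ends used, so inclusion--exclusion expresses $\binom{n}{k}\Prb(W_k=k+j)$ as a signed sum of binomial coefficients $\binom{\cdot}{k}$ evaluated at partial sums $n_i+n_{i+1}+\dots +n_{i'}$; letting $k$ range, the law therefore determines, for each $j$, an explicit signed combination of such partial sums.

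\emph{From this data to the composition up to reversal.} This is the heart of the argument, and I would organize it as an induction that peels the two ends of the spine simultaneously. Taking $j=\ell$ forces the window to be all of $\{0,\dots ,\ell\}$, and one extracts the end--block multiset $\{n_0,n_\ell\}$ (one cannot tell which is which -- that is exactly the global reversal ambiguity). Decreasing $j$ by one at each step, the corresponding signed combination of partial sums, once the already--identified contributions are subtracted, yields the multiset of the next layer of nested partial sums taken from the two ends (for $j=\ell-1$ one obtains $\{n_0+n_1,\ n_{\ell-1}+n_\ell\}$, and so on); combined with the known block multiset and end multiset, this pins down $n_0$ together with $n_1$, then $n_2$, and so on inward, consistently with a single global reversal. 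I expect this step to be the main obstacle: one must check that the various degeneracies -- ties among the $n_i$, vanishing interior blocks $n_i=0$, and windows that can occupy only one location -- do not destroy information, and one must set the induction up so that the reversal symmetry is respected at every stage; essentially all of the bookkeeping lives here.

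\emph{Detecting caterpillars.} For the final assertion I would argue separately and more cheaply. From the law of $\cW_\TT$ one reads off $n$, the constant $W_n$, and the largest value $D$ in the support of $W_2$; here $D=\operatorname{diam}(\TT)$ (the graph diameter, which is attained between two leaves) and $I:=W_n-n+1$ is the number of interior vertices. I would then prove the elementary lemma that every tree with at least three leaves satisfies $\operatorname{diam}(\TT)\le I+1$, with equality if and only if $\TT$ is a caterpillar: a diameter path has exactly $\operatorname{diam}(\TT)-1$ interior vertices, so equality forces every interior vertex onto that single path, whereupon every other vertex, being a leaf whose neighbour is interior, is attached directly to the path. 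Hence $\TT$ is a caterpillar if and only if $D=W_n-n+2$, a condition visible in the law of $\cW_\TT$; when it holds, the first part of the theorem identifies which caterpillar it is.
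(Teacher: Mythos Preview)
Your reduction to the range process of a random shuffle of a word, your recovery of $n$, $\ell$, and the multiset $\{n_0,\dots,n_\ell\}$, and your caterpillar--detection argument via $\operatorname{diam}(\TT)=W_n-n+2$ are all correct and essentially equivalent to what the paper does (the paper's \Cref{prop:det if caterpillar} is the same numerical criterion).

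The gap is exactly where you flag it: the ``peeling'' induction is asserted but not carried out, and it is not clear it can be. From the marginal $\Prb(W_k=k+j)$ you obtain, for each $j$, a \emph{signed} sum $\sum_i\bigl[\binom{S_{i,i+j}}{k}-\binom{S_{i+1,i+j}}{k}-\binom{S_{i,i+j-1}}{k}+\binom{S_{i+1,i+j-1}}{k}\bigr]$, and extracting from this the multiset $\{n_0+n_1,\,n_{\ell-1}+n_\ell\}$ (and its successors) runs into genuine cancellations: for example, if $n_{\ell-1}=n_0+n_1$ then two terms cancel and the remaining data no longer separates $n_\ell$ from $n_0$ at that stage. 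On top of this, the left/right matching you need at each step must be made consistent with a \emph{single} global reversal, and ties among the $n_i$ make this delicate. None of this is addressed, so as it stands the heart of the argument is missing.

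The paper avoids the peeling entirely and uses a different, cleaner route. From the marginal of $W_2$ it reads off the autocorrelation $\sum_r n_r n_{r+k}$ for every $k$; crucially, it then uses the \emph{joint} law of $(W_2,W_3)$ --- specifically $\Prb\{W_2=2+r,\,W_3=3+\ell\}$ --- to obtain the symmetrized sequence $n_r+n_{\ell-r}$ for every $r$ (your plan, by contrast, uses only the individual marginals $\Prb(W_k=\cdot)$). The finish is then algebraic: writing $g(z)=\sum_k n_k e^{izk}$, the autocorrelation gives $g(z)g(-z)$ and the symmetrized sequence gives $g(z)+e^{iz\ell}g(-z)$; by the Rosenblatt--Seymour factorization theorem for homometric sequences, equality of $g(z)g(-z)$ forces $g'(z)=\phi(z)\psi(-z)$ for some factorization $g=\phi\psi$, and the second identity then collapses to $(\phi(z)-e^{iz\ell}\phi(-z))(\psi(z)-\psi(-z))=0$, yielding $g'=g$ or $g'(z)=e^{iz\ell}g(-z)$. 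So the paper trades your induction for a single nontrivial lemma about homometric sets; if you want to salvage the peeling approach, you will need either to use joint information (not just marginals) or to prove that the cancellations above can always be undone.
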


Our final results are for the classes of (unrooted) 
{\em $(k+1)$-valent} and {\em rooted  $k$-ary} combinatorial trees. For $k \geq 2$, a 
$(k+1)$-valent combinatorial tree is a combinatorial tree for which 
all vertices have degree either $k+1$ (the internal vertices) or $1$
(the leaves). For $k \geq 2$,  a rooted $k$-ary combinatorial tree 
is a combinatorial tree for which one internal vertex (the root) 
has degree $k$ and the remaining internal vertices have degree $k+1$;
the leaves, of course, have degree $1$. 
When $k = 2$ we refer to a rooted $2$-ary combinatorial tree 
as a rooted {\em binary} combinatorial tree. Attaching an extra vertex
via and edge to the root of a rooted $k$-ary tree produces
a $(k+1)$-valent combinatorial tree.

\begin{theorem}{The isomorphism type of a 
$(k+1)$-valent combinatorial tree (respectively, a $k$-ary tree)
 is uniquely determined by the joint probability distribution of its random length sequence.}
\label{thrm:main kary1}
\end{theorem}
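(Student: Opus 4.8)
The plan is to show that the joint probability distribution of $\cW_\TT$ for a $(k+1)$-valent (respectively, rooted $k$-ary) combinatorial tree determines, first, the number of leaves $n$ and hence the entire shape parameters of the tree that are forced by $(k+1)$-valency, and then to run an inductive reconstruction of the ``profile'' of the tree by decoding successive coordinates of the random length sequence. Since all edge-weights equal one, $W_k$ is just the number of edges in the subtree spanned by $k$ uniformly chosen leaves, so each $W_k$ takes integer values and the distribution of $\cW_\TT$ is a probability measure on a finite subset of $\bZ^{n-1}$. The first step is bookkeeping: from the support of $W_n$ (a constant) we read off the total number of edges, and combined with the known degree sequence of a $(k+1)$-valent tree on $n$ leaves this pins down the number of internal vertices; so two $(k+1)$-valent trees with the same distribution have the same number of leaves and internal vertices, and we may induct on $n$.

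The core idea is to extract, from the distribution of the \emph{increments} $D_k := W_{k+1} - W_k$, enough combinatorial information to identify the tree. Note $D_k$ is the length of the path from the $(k+1)$st sampled leaf to the subtree spanned by the first $k$ leaves, i.e. the number of ``new'' edges attached when that leaf is added. The key structural observation I would exploit is that, because every internal vertex has degree exactly $k+1$, the subtree spanned by a set $K$ of leaves is itself a tree in which internal vertices have degree between $3$ and $k+1$, and the ``pendant'' structure is rigid: the conditional law of $D_k$ given $W_2, \dots, W_k$ encodes how many internal vertices of the spanned subtree currently have ``spare capacity.'' Concretely, I would first handle the two-leaf and three-leaf cases directly (the distribution of $W_2$ gives the diameter distribution, etc.), and then argue that the joint law of $(W_2, W_3)$ already separates the first branching structure. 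For the inductive step, I would condition on the event that the first $k$ leaves span a particular isomorphism type of subtree $S$ (with its leaves labelled by sampling order) — the probability of each such event is determined by the distribution of $\cW_\TT$ together with the inductive hypothesis applied to subtrees — and then show that the conditional distribution of the remaining increments $(D_k, \dots, D_{n-1})$, which is again determined by the data, pins down how $S$ sits inside $\TT$, ultimately forcing the isomorphism type of $\TT$. The reduction between the $(k+1)$-valent and rooted $k$-ary cases is the easy remark already noted in the excerpt: attaching a pendant edge at the root of a $k$-ary tree gives a $(k+1)$-valent tree, and this operation shifts the random length sequence in a controlled (deterministic) way, so one case follows from the other.

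The main obstacle I anticipate is the same ``algebraic coincidence'' problem flagged before \Cref{thrm:main generalposition}: two distinct sets of leaves can span subtrees with identical length sequences, so the event ``the first $k$ leaves span a subtree of type $S$'' is not directly readable from the distribution of $\cW_\TT$ — only the aggregate length sequence is observed, not which leaf-orderings produce which sequences. The way around this is to use the rigidity of $(k+1)$-valency much more forcefully than in the general-position case: for these highly symmetric trees, the number of spanning subtrees of each isomorphism type with each length can be computed from global counts (number of leaves below each internal vertex, etc.), so I would aim to show that the vector of coefficients $\bP(W_2 = w_2, \dots, W_n = w_n)$ determines, for each $k$, the \emph{multiset} of isomorphism types of subtrees spanned by $k$-element leaf sets weighted by the number of leaf-orderings, and that this multiset in turn determines $\TT$. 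Establishing that last determination — essentially a reconstruction-from-subtree-profiles statement specialized to regular trees — is where the real work lies, and I expect it to require a careful induction on $k$ together with an extremal argument identifying, say, the minimal length sequence in $\cJ_\TT$ (as in \Cref{rem:ultrametric min enough}) with a canonical ``comb-like'' ordering of the leaves that exposes the branching structure one vertex at a time.
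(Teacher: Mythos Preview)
Your proposal identifies the central obstacle correctly but does not overcome it. The inductive scheme you describe in the second paragraph --- condition on the event that the first $k$ sampled leaves span a subtree of a given isomorphism type $S$, then use the conditional law of the remaining increments --- is exactly the step that fails, for the reason you yourself name in the third paragraph: that event is not measurable with respect to $\cW_\TT$, because distinct leaf-sets can produce identical length sequences. Your proposed workaround, that ``the multiset of isomorphism types of subtrees spanned by $k$-element leaf sets'' is determined by the distribution and in turn determines $\TT$, is a bare assertion of a reconstruction-from-subtree-profiles theorem that is at least as hard as the result you are trying to prove; you give no argument for it. The reduction between the $(k+1)$-valent and rooted $k$-ary cases is also not the simple shift you claim: attaching a pendant leaf at the root changes the number of leaves being sampled, so the random length sequence of the enlarged tree is not a deterministic translate of the original one, and indeed the paper treats the two cases by parallel but separate arguments (down-split versus up-split sequences).

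The throwaway remark at the end of your proposal --- look at a minimal element of $\cJ_\TT$ and identify it with a canonical ordering of the leaves --- is in fact the actual proof, and what you are missing is the mechanism that makes it work. The key structural fact (\Cref{lem:k subtree size}) is that a subtree $\SS$ of a $(k+1)$-valent tree is itself rooted $k$-ary if and only if $\#\EE(\SS) = \frac{k}{k-1}(\#\LL(\SS)-1)$; in the binary case this reads $W_m = 2m-2$. Hence, for the \emph{smallest} $m$ with $\bP\{W_m = 2m-2\}>0$, the event $\{W_m = 2m-2\}$ forces the leaves $Y_2,\dots,Y_m$ to exactly fill one of the two rooted binary subtrees hanging off the vertex adjacent to the marked leaf (\Cref{lem:split subtrees}). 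This rigidity lets one define recursively a class of ``down-split'' sequences in the support of $\cW_{(\TT,v)}$, equip them with a recursive total order $\prec$ keyed on the splitting index, and prove by induction that the $\prec$-minimal down-split sequence determines $(\TT,v)$ (\Cref{prop:det by downsplit}). The argument never needs to disentangle which leaf-sets produce which sequences; it only needs that the minimal sequence, read off directly from the support of $\cW_\TT$, forces a unique tree. That is the idea your outline gestures at but does not supply.
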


In fact, our proof of \Cref{thrm:main kary1} leads us to a stronger conclusion.

\begin{theorem}{Fix $n > 1$. Let $\cT$ be a random 
$(k+1)$-valent combinatorial tree
(respectively, a random $k$-ary combinatorial tree) with $n$ leaves. 
Then, the probability distribution of the isomorphism
type of $\cT$ is uniquely determined by 
the joint probability distribution of its random length sequence. }
\label{thrm:main kary2}
\end{theorem}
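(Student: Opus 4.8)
The plan is to derive \Cref{thrm:main kary2} from \Cref{thrm:main kary1} by upgrading the statement ``the law of $\cW_{\TT}$ determines $\TT$'' to the statement ``the laws $\{\cW_{\TT}\}$ are linearly independent.'' Fix $n>1$ and let $\mathcal I$ be the finite set of isomorphism types of $(k+1)$-valent combinatorial trees with $n$ leaves (the $k$-ary case is handled identically). For $T\in\mathcal I$ write $\mu_T$ for the law of $\cW_T$ on $\bR^{n-1}$. Note that $\mathcal I$, the list $\{\mu_T\}$, and the integer $k$ are all recoverable from the data: $n-1$ is the ambient dimension and the last coordinate $W_n$ is almost surely the deterministic number of edges, which pins down $k$. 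If $\cT$ is a random tree with $\Prb(\cT\cong T)=p_T$, then the law of $\cW_{\cT}$ is $\nu_p=\sum_{T\in\mathcal I}p_T\,\mu_T$, so it suffices to show that the measures $\{\mu_T : T\in\mathcal I\}$ are linearly independent: then $\nu_p=\nu_q$ forces $\sum_T(p_T-q_T)\mu_T=0$ and hence $p=q$.

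To prove linear independence I would use a lexicographic ``peeling'' argument. For $T\in\mathcal I$ let $\ell(T)\in\bN^{n-1}$ be the lexicographically smallest element of $\cJ_T$; equivalently, $\ell(T)$ is the $<_{\mathrm{lex}}$-minimal point of $\mathrm{supp}(\mu_T)$, and it is an atom of mass at least $1/n!$. The key structural input, addressed next, is that $T\mapsto\ell(T)$ is injective on $\mathcal I$. Granting this, suppose $\sum_{T}c_T\mu_T=0$ with not all $c_T$ zero, and choose $T^\star$ minimizing $\ell(\cdot)$ lexicographically among $\{T:c_T\neq 0\}$. Evaluating the identity on the singleton $\{\ell(T^\star)\}$ gives $\sum_T c_T\,\mu_T(\{\ell(T^\star)\})=0$. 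A term is nonzero only if $c_T\neq 0$, which forces $\ell(T)\ge_{\mathrm{lex}}\ell(T^\star)$, and $\ell(T^\star)\in\mathrm{supp}(\mu_T)$, which forces $\ell(T)\le_{\mathrm{lex}}\ell(T^\star)$; hence $\ell(T)=\ell(T^\star)$ and, by injectivity, $T=T^\star$. So $c_{T^\star}\mu_{T^\star}(\{\ell(T^\star)\})=0$, a contradiction. Equivalently and constructively, one recovers the $p_T$ one at a time in increasing $\ell$-order, via $\nu_p(\{\ell(T)\})=p_T\,\mu_T(\{\ell(T)\})$ after subtracting off all types with strictly smaller $\ell$.

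It remains to establish the sharpening that $\ell(T)$ alone determines $T$ within these families — the analogue of \Cref{rem:ultrametric min enough}, and presumably essentially contained in the proof of \Cref{thrm:main kary1}. The plan is to exhibit a leaf ordering realizing $\ell(T)$ as the output of a greedy/depth-first rule: start at a cherry (for a $(k+1)$-valent tree) or with the leaves nearest the root along the root edge (for a $k$-ary tree), then repeatedly append a leaf whose addition increases the spanned length by the least possible amount, breaking ties by recursing into the lexicographically smaller pendant subtree. One then checks (i) that this ordering does realize $\ell(T)$, and (ii) that the increment sequence $\ell(T)_2,\,\ell(T)_3-\ell(T)_2,\,\ldots$ can be decoded recursively into the recursively-ordered multiset of pendant-subtree shapes hanging off the explored skeleton — the same bookkeeping as in the rooted-tree-identifying polynomial of \cite{MR967486} — so $\ell(T)$ reconstructs $T$. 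The rigidity making the increments decodable is exactly that every internal vertex has degree $k+1$ (resp.\ $k$ at the root): this fixes how many new edges each appended leaf can contribute.

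I expect step (ii) to be the main obstacle. \Cref{thrm:main kary1} only asserts that the full distribution $\mu_T$ determines $T$, and its proof may use features of $\mu_T$ beyond the single lex-minimal atom; if so one must rework it to extract \emph{some} injective, totally-ordered-valued statistic $s(T)$ realized at an extreme point of $\mathrm{supp}(\mu_T)$ compatibly with the order on the values $s(T)$ — any such $s$ replaces $\ell$ verbatim in the peeling argument. The greedy description of the lex-minimal ordering looks like the cleanest candidate, but verifying (i) and (ii) uniformly over the finitely many types in $\mathcal I$ is where the real work lies; the reductions in the first two paragraphs, and the recoverability of $n$ and $k$ from $\nu_p$, are routine.
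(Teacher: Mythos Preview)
Your reduction and peeling argument are essentially the paper's proof: there too one fixes for each isomorphism type $\TT$ a distinguished atom $s^{\TT}\in\mathrm{supp}(\mu_{\TT})$, shows $\TT\mapsto s^{\TT}$ is injective, and observes that $s^{\TT}\in\mathrm{supp}(\mu_{\TT'})$ forces $s^{\TT'}\preceq s^{\TT}$, yielding an upper-triangular linear system with nonzero diagonal. The only point of divergence is the choice of statistic. The paper does \emph{not} use the lexicographic minimum of $\cJ_{\TT}$; instead its proof of \Cref{thrm:main kary1} introduces the class of \emph{down-split sequences} (for marked $(k+1)$-valent trees) and \emph{up-split sequences} (for rooted $k$-ary trees), equips each with a bespoke recursive total order $\prec$, and shows that the $\prec$-minimal split sequence in the support determines the tree (\Cref{prop:det by downsplit}, \Cref{prop:det by upsplit}). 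That is exactly the ``\emph{some} injective, totally-ordered-valued statistic realized at an extreme point'' of your fallback paragraph, so you have correctly anticipated both the shape of the argument and where the work lies; you just guessed the wrong order.
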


Note that in \Cref{thrm:main kary2} there are two sources of randomness
in the construction of the random length sequence: 
we first choose a realization of the random $\cT$ and then take an independent
uniform random listing of the leaves to build the increasing sequence of
subtrees and their lengths.  

 
The rest of the paper consists primarily of proofs of 
the above results in the order we have presented them.
In \Cref{sec:open probs} we briefly discuss 
further open questions related to Question~\ref{q:main}.

\section{Trees with up to $n = 4$ leaves: Proof of \Cref{thrm:main n4}}
\label{sec:n4 trees}

We begin by looking at Question~\ref{q:main} for
edge-weighted trees with a small number of leaves and give a 
proof of \Cref{thrm:main n4} that answers  Question~\ref{q:main}
in the affirmative for general, simple edge-weighted trees with  
$n = 2, 3$ or $4$ leaves.

The case of \Cref{thrm:main n4} for simple trees with $n = 2$ 
leaves is trivial, as  all such trees have two leaves and one edge, 
$\cW_\TT = (W_2)$ in this case, and $W_2$ is the length
of the edge.

The case of $n = 3$ leaves is only slightly more complicated, as all such
trees are star-shaped. Thus, determining $\TT$ from $\cW_{\TT}$ consists of determining its three edge weights. These can be inferred easily from $\cW_{\TT}$ by looking at the distribution of $W_3 - W_2$, which, since $W_3$ is constant (equal to the total length of $\TT$), is distributed as a uniform random choice from the three edge weights.

Finally, we give a proof of \Cref{thrm:main n4} in the case when $n = 4$.

\begin{proof}
For $n = 4$ leaves, there are two possible simple combinatorial trees,
and hence two possibilities for the shape of $\TT$. 
The first is the star-shaped tree with four edges and one interior vertex. The second is the $3$-valent tree with two interior vertices and one interior edge.  See \Cref{fig:n4 trees}.

\begin{figure}[ht]
    \centering
    \includegraphics[width=0.4\textwidth]{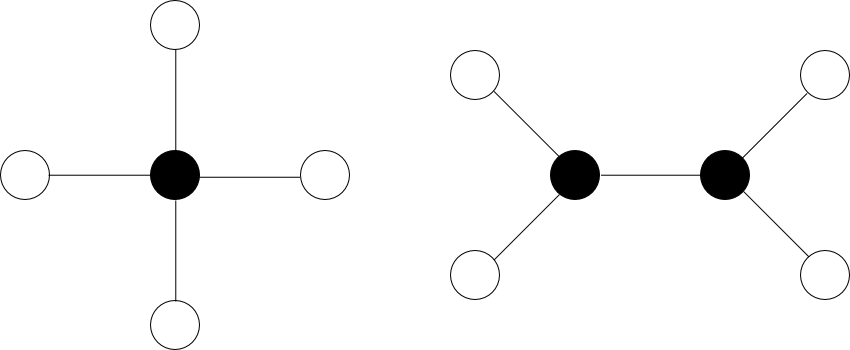}
    \caption{The two possible simple combinatorial trees with $n = 4$ leaves.}
    \label{fig:n4 trees}
\end{figure}

To determine which possibility $\TT$ is, we first look at the distribution of $W_4 - W_3$ to find the lengths of the four edges connecting directly to the four leaves. Call these edges {\em pendent}. If the sum of the four pendent edge lengths equals $W_4$, then $\TT$ is star shaped and we have determined $\TT$ up to isomorphism. If not, then $\TT$ is $3$-valent and the difference between $W_4$ and the sum of the pendent edge lengths is the length $e$ of the interior edge. All that is left to determine $\TT$ up to isomorphism in this second case is determining how the pendent edges pair on each side of the interior edge.

First, if the multiset of the lengths of pendent edges is of the form 
$\{a, a, a, a\}$ or $\{a, a, a, b \}$, 
then $\TT$ is already uniquely determined.

Next, if the multiset is of the form $\{a, a, b, b \}$, 
then we need to distinguish between the case where the 
leaves with pendent edges of length $a$ are siblings 
(and thus so are the leaves with pendent edge length $b$) 
and the case where leaves with pendent edge lengths $a$ and $b$ are paired. 
In the former case the possible values of $W_2$ are
$a+a, b+b, a+b+e$
with respective probabilities
$\frac{4}{24}, \frac{4}{24}, \frac{16}{24}$,
 whereas in the latter case the 
 possible values of $W_2$ are
$a+b, a+b+e, a+a+e, b+b+e$ 
with respective probabilities
$\frac{8}{24}, \frac{8}{24}, \frac{4}{24}, \frac{4}{24}$,
and we can certainly distinguish between the two cases.

If the multiset of pendent edge lengths is of the form $\{a, a, b, c\}$, 
then there are the following two possibilities:
\begin{itemize}
\item[(P1)]
the two leaves with pendent edge length 
$a$ are siblings and the two leaves with pendant edge lengths $b$ and $c$
are siblings, 
in which case the possible values of $W_2$ are
$a+a, a+b+e, a+c+c, b+c$ with respective probabilities 
$\frac{4}{24}, \frac{8}{24}, \frac{8}{24}, \frac{4}{24}$;
\item[(P2)]
a leaf with pendent edge length $a$ is the sibling of the one 
with pendent edge length $b$ and the other leaf with pendent edge length $a$
is the sibling of the one with pendent edge length $c$,
in which case the possible values of $W_2$ are
$a+b, a+e, a+a+e, a+b+e, a+c+c, b+c+e$ with respective probabilities 
$\frac{4}{24}, \frac{4}{24}, \frac{4}{24}, \frac{4}{24}, \frac{4}{24}, \frac{4}{24}$. 
\end{itemize} 

Suppose without loss of generality that $b<c$.  
If $a<b<c$, then $\bP\{W_2 = a+a\}$ is 
$\frac{4}{24}$ for (P1) and $0$ for (P2). 
If $b<a<c$ or $b<c<a$, then $\bP\{W_2 = a+c+e\}$ is 
$\frac{8}{24}$ for (P1) and $\frac{4}{24}$ for (P2).
In all cases we can distinguish between (P1) and (P2).  

Finally, if the multiset of pendent edge lengths is of the form
$\{a, b, c, d \}$,
then there are the following two possibilities:
\begin{itemize}
\item[(P3)]
the leaf with pendent edge length $a$ 
is paired with the one with pendent edge length $b$ 
and the leaf with pendent edge length $c$ 
is paired with the one with pendent edge length $d$,
in which case the possible values of $W_2$ are
$a+b, c+d, a+c+e, a+d+e, b+c+e, b+d+e$
with common probability $\frac{4}{24}$; 
\item[(P4)]
the leaf with pendent edge length $a$ 
is paired with the one with pendent edge length $c$ 
and the leaf with pendent edge length $b$ 
is paired with the one with pendent edge length $d$,
in which case the possible values of $W_2$ are
$a+c, b+d, a+b+e, a+d+e, b+c+e, c+d+e$
with common probability $\frac{4}{24}$;
\item[(P5)]
the leaf with pendent edge length $a$ 
is paired with the one with pendent edge length $d$ 
and the leaf with pendent edge length $b$ 
is paired with the one with pendent edge length $c$,
in which case the possible values of $W_2$ are
$a+d, b+c, a+c+e, a+b+e, c+d+e, b+d+e$
with common probability $\frac{4}{24}$;
\end{itemize}

Suppose without loss of generality that $a < b < c < d$. Then
possibility (P3) holds if and only if $\bP\{W_2=a+b\} > 0$ and
possibility (P5) holds if and only if $\bP\{W_2=a+b\} = 0$ and
$\bP\{W_2 = b+d+e\}>0$, so we can distinguish between (P3), (P4)
and (P5).
\end{proof}

The  argument in the proof of
\Cref{thrm:main n4} seems rather {\em ad hoc} and it does not
suggest a systematic approach to obtaining the analogous result
for trees with an arbitrary numbers of leaves.  
The number of simple combinatorial trees
with $n$ leaves grows so rapidly with $n$ (see, for example, 
\cite{felsenstein}) that even for trees with a relatively small
fixed number of leaves a case-by-case argument seems rather forbidding.  
Nonetheless, we do conjecture that an affirmative answer to Question~\ref{q:main} 
holds more generally.

\section{Trees in general position: Proof of \Cref{thrm:main generalposition}}
\label{sec:general pos}

Recall that the edge-weights of a simple, edge-weighted tree $\TT$ are
in general position if the sum of the lengths of any two distinct 
subset of edges of $\TT$ are not equal. 

\begin{proof}
By assumption, if 
$\{y_1', \ldots, y_k'\}$ and $\{y_1'', \ldots, y_k''\}$
are two subsets of $\LL(\TT)$ such that
$\WW_\TT(\{y_1', \ldots, y_k'\}) = \WW_\TT(\{y_1'', \ldots, y_k''\})$,
then $\{y_1', \ldots, y_k'\} = \{y_1'', \ldots, y_k''\}$.  
Consequently, if 
$\{y_1', \ldots, y_k'\}$ and $\{y_1'', \ldots, y_k''\}$
are two subsets of $\LL(\TT)$ such that
$\WW_\TT(\{y_1', \ldots, y_j'\}) = \WW_\TT(\{y_1'', \ldots, y_j''\})$
for $2 \le j \le k$, then 
$\{y_1', y_2'\} = \{y_1'', y_2''\}$
and 
$y_j' = y_j''$ for $3 \le j \le k$.

Recall that $Y_1, \ldots, Y_n$ are the successive randomly chosen leaves
used in the construction of $\cW_\TT = (W_2, \ldots, W_n)$.

Because $W_n - W_{n-1}$ is the length of the pendent edge
attaching $Y_n$ to the rest of $\TT$, it follows that the set
$C := \{\ell > 0 : \bP\{W_n - W_{n-1} = \ell\} > 0\}$
has $n$ elements and  $\bP\{W_n - W_{n-1} = \ell\}=\frac{1}{n}$
for each $\ell \in P$.  There are at least two leaves of $\TT$
that are siblings, and so there exist $\ell', \ell'' \in C$
such that $\bP\{W_2 = \ell' + \ell''\} > 0$.  Fix such a pair
of lengths and write
$x_1$ and $x_2$ for the (unique) leaves of $\TT$
with pendent edges having respective lengths $\ell'$ and $\ell''$.
We have  $\bP\{W_2 = \ell' + \ell''\} = \frac{1}{\binom{n}{2}}$,
and the event $\{W_2 = \ell' + \ell''\}$ coincides with
the event $\{\{Y_1,Y_2\} = \{x_1, x_2\}\}$.

By assumption, the set
$D := 
\{\ell > 0 : \bP\{W_3 - W_2 = \ell \, | \, W_2 = \ell' + \ell''\} > 0\}$
has $n-2$ elements and  
$\bP\{W_3 - W_2 = \ell \, | \, W_2 = \ell' + \ell''\} = \frac{1}{n-2}$
for each $\ell \in D$.  Index the values of $D$ as $\ell_3, \ldots, \ell_n$
and write $x_k$, $3 \le k \le n$, for the unique leaf of $\TT$
that is distance $\ell_k$ from the unique vertex of $\TT$
that is adjacent to both
of the sibling leaves $x_1$ and $x_2$.  We will show that
it is possible to determine the leaf-to-leaf distances 
$r_\TT(x_i, x_j)$, $1 \le i,j \le n$.
As we recalled in the Introduction, this information
uniquely identifies the isomorphism type of $\TT$.

Again by assumption, the set 
$E := \{\ell > 0 : \bP\{W_4 = \ell \, | \, W_2 = \ell' + \ell''\} > 0\}$
has $\binom{n-2}{2}$ elements and  
$\bP\{W_4 = \ell \, | \, W_2 = \ell' + \ell''\} = \frac{1}{\binom{n-2}{2}}$
for each $\ell \in E$.  For a given $\ell \in E$ there is a unique
ordered pair $(x_i, x_j)$, $3 \le i \ne j \le n$, and a unique $e \ge 0$ such that
\[
\bP\{W_3 - W_2 = \ell_i, \, W_4 - W_3 = \ell_j - e 
\, | \, W_2 = \ell' + \ell'', \, W_4 = \ell\} > 0
\]
and
\[
\bP\{W_3 - W_2 = \ell_j, \, W_4 - W_3 = \ell_i - e 
\, | \, W_2 = \ell' + \ell'', \, W_4 = \ell\} > 0,
\]
in which case the two
conditional probabilities in question are both $\frac{1}{2}$.
Moreover, every ordered pair 
$(x_i, x_j)$, $3 \le i \ne j \le n$, corresponds to some unique $\ell \in E$ and $e \ge 0$ in this way.  
The event 
$\{W_2 = \ell' + \ell'', \, W_3 - W_2 = \ell_i, W_4 - W_3 = \ell_j - e, \, W_4 = \ell\}$
coincides with the event 
$\{\{Y_1,Y_2\} = \{x_1, x_2\}, \, Y_3 = x_i, \, Y_4 = x_j\}$
and the event 
$\{W_2 = \ell' + \ell'', \, W_3 - W_2 = \ell_j, W_4 - W_3 = \ell_i - e, \, W_4 = \ell\}$
coincides with the event $\{\{Y_1,Y_2\} = \{x_1, x_2\}, \, Y_3 = x_j, \, Y_4 = x_i\}$.
Considering the subtree of $\TT$ spanned by $\{x_1, x_2, x_3, x_4\}$
and ignoring the vertices with degree two to produce a simple tree, the leaves
$x_i$ and $x_j$ are siblings in this simple tree (as are $x_1$ and $x_2$),
and the quantity $e$ is the distance between the vertex in the subtree 
to which $x_i$ and $x_j$ are adjacent and the vertex to which 
$x_1$ and $x_2$ are adjacent; the lengths of the pendent edges
connecting $x_i$ and $x_j$ to the rest of the subtree are $\ell_i - e$
and $\ell_j - e$.
Thus, if the ordered pair
$(x_i, x_j)$ corresponds to $\ell \in E$
and $e \ge 0$, then, recalling the notation $r_\TT$ for the 
path length distance in $\TT$,
$r_\TT(x_1,x_2) = \ell'+\ell''$, 
$r_\TT(x_1, x_i) = \ell' + \ell_i$,
$r_\TT(x_1, x_j) = \ell' + \ell_j$,
$r_\TT(x_2, x_i) = \ell'' + \ell_i$,
$r_\TT(x_2, x_j) = \ell'' + \ell_j$,
and
$r_\TT(x_i, x_j) = \ell_i + \ell_j - e$.
 
Therefore, the joint probability distribution the random length
sequence $\cW_\TT$ uniquely determines the matrix of leaf-to-leaf distances 
in $\TT$ and hence the isomorphism type of $\TT$.

\end{proof}

\section{Ultrametric trees: Proof of \Cref{thrm:main ultrametric}}
\label{sec:ultrametric}

Recall that $\cJ_\TT$ is the set of sequences $(\ell_2, \ldots, \ell_n)$ 
such that $\bP\{W_k = \ell_k, \, 2 \le k \le n\} > 0$.
Write $\prec$ for the usual {\em lexicographic}
total order on $\cJ_\TT$ (that is $\ell' \prec \ell''$
if in the first coordinate where the two sequences differ the entry of
the $\ell'$ is smaller than the entry of $\ell''$).
Equivalently, $\ell' \prec \ell''$ if
either $\ell_2' < \ell_2''$ or $\ell_2' = \ell_2''$
and for the smallest $k \ge 2$ such that
$\ell_{k+1}' - \ell_k' \ne \ell_{k+1}'' - \ell_k''$ we have 
$\ell_{k+1}' - \ell_k' < \ell_{k+1}'' - \ell_k''$.
In this section we prove \Cref{thrm:main ultrametric} by showing that
that the tree $\TT$ is determined up to isomorphism by the
minimal element of $\cJ_\TT$.

We use a similar technique (but with a different total order)
to establish \Cref{thrm:main kary1} for 
 $k+1$-valent and rooted $k$-ary 
 combinatorial trees in \Cref{sec:kary trees}.

\begin{proof}
Let $(\ell_2, \ell_3, \ldots, \ell_n)$ be the minimal element of $\cJ_\TT$. 
Write $x_1, x_2, \ldots, x_n$ for an ordering of $\LL(\TT)$
such that $\ell_k = \WW_\TT(\{x_1, x_2, \ldots, x_k\})$  for
$k=2, \ldots, n$. 

We will establish by induction that for $2 \leq k \leq n$ the ultrametric
real tree spanned by the leaves 
$\{x_1, x_2, \ldots, x_k\}$ can be reconstructed from
$(\ell_2, \ell_3, \ldots, \ell_k)$
and, moreover, if we adopt the convention that
we draw ultrametric real trees in the plane with the root
at the top and leaves along the bottom, then
this particular real tree can be embedded in the plane with the leaves
$x_1, x_2, \ldots, x_k$ in order from left to right.

The claim is certainly true when $k=2$.  
Suppose the claim is true for $2,3,\ldots,k$. 
 
Write $\TT_k$ for the ultrametric real tree
spanned by $\{x_1, x_2, \ldots, x_k\}$ and denote the height of $\TT_k$
by $h_k$; 
that is, $h_k$ is the common distance from each of the leaves 
of $\TT_k$ to the root $\rho_k$ of $\TT_k$.  We can, of course,
suppose that $\TT_2 \subset \TT_3 \subset \ldots \subset \TT_n$.

If $\ell_{k+1}-\ell_k \ge h_k$,
then the ultrametric real tree 
$\TT_{k+1}$ spanned by $\{x_1, x_2, \ldots, x_k, x_{k+1}\}$
must consist of an arc of length 
$$
h_{k+1} = \frac{1}{2}(\ell_{k+1} - \ell_k + h_k)
$$
from the root $\rho_{k+1}$ of $\TT_{k+1}$ to the leaf
$x_{k+1}$ and an arc of length
$\frac{1}{2}(\ell_{k+1} - \ell_k - h_k)$ from 
``new root'' $\rho_{k+1}$
to the ``old root'' $\rho_k$.  
In this case we can, by the inductive hypothesis, 
certainly embed $\TT_{k+1}$ in
the plane with the leaf $x_{k+1}$ to the right of the leaves
$x_1, x_2, \ldots, x_k$.  

Assume, therefore, that
$\ell_{k+1}-\ell_k < h_k$.  Then the ultrametric real tree
$\TT_{k+1}$ must consist of $\TT_k$ and an arc 
of length $\ell_{k+1} - \ell_k$ joining
$x_{k+1}$ to a point $y \in \TT_k$.  
It will suffice to show that $y$ must be on the arc $[\rho_k, x_k]$ that
connects  $\rho_k$ to $x_k$ because
there is a unique
ultrametric real tree consisting 
of $\TT_k$ and an arc of length
$\ell_{k+1} - \ell_k$ joining a new leaf
to a point on the arc $[\rho_k, x_k]$ (this tree must have
root $\rho_k$ and the point where the arc of length $\ell_{k+1} - \ell_k$
attaches to $[\rho_k, x_k]$ must be at distance $h_k - (\ell_{k+1}-\ell_k)$
from $\rho_k$)
and, moreover, such a tree can be embedded in the plane with
the new leaf to the right of the leaves $\{x_1, x_2, \ldots, x_k\}$.

Suppose, then,  that $y$ is not on the arc $[\rho_k, x_k]$.  Let $j$ be the
maximum of the indices $i < k$ 
such that $y$ is on the arc connecting $x_i$ to $\rho_k$.
Write $u$ for the point that is closest
to $x_{j+1}$ in the subtree spanned by
$\{x_1, x_2, \ldots, x_j\}$ and $\rho_k$.  
Write $v$ for the point
that is closest to $x_{j+1}$
in the subtree spanned by $\{x_1, x_2, \ldots, x_j\}$. 
Equivalently, $v$ is the point
in the subtree spanned by $\{x_1, x_2, \ldots, x_j\}$
that is closest to $u$.  We may, of course, have $u=v$
(which occurs if and only if $h_{j+1} = h_j$).
By the inductive hypothesis, $u$ and $v$
are on the arc connecting $x_j$ to $\rho_k$
and 
$$
r_\TT(x_{j+1}, u) + r_\TT(u,v) = \ell_{j+1} - \ell_j.
$$

By construction, 
$y$ is the point closest
to $x_{k+1}$ in the subtree spanned by
$\{x_1, x_2, \ldots, x_j\}$ and $\rho_k$.  
Write $w$ for the point
closest to $x_{k+1}$ 
in the subtree spanned by $\{x_1, x_2, \ldots, x_j\}$.
Equivalently, $w$ is the point
in the subtree spanned by $\{x_1, x_2, \ldots, x_j\}$
that is closest to $y$.  We have
$$
\WW_\TT(\{x_1, \ldots, x_j, x_{k+1}\}) - \ell_j
= r_\TT(x_{k+1}, y) + r_\TT(y,w).
$$

By the definition of $j$, the points $y$ and $u$
are on the arc connecting $x_j$
to $\rho_k$ and
$r_\TT(u,x_j) > r_\TT(y, x_j)$.  This implies that
$r_\TT(u,v) \ge r_\TT(y,w)$.  It also implies,
by ultrametricity, that 
$$
r_\TT(x_{k+1},y) = r_\TT(x_j,y) <  r_\TT(x_j,u).
$$
Consequently, 
$$
\WW_\TT(\{x_1, \ldots, x_j, x_{k+1}\}) - \ell_j
< \ell_{j+1} - \ell_j.
$$
This, however,
contradicts the minimality of $(\ell_2, \ldots, \ell_n)$.
\end{proof}

\begin{rem} 
{As we noted in \Cref{rem:ultrametric determination}, it is interesting
to know whether it is possible to determine from the joint
probability distribution of the random length sequence whether
an edge-weighted tree is ultrametric.
The preceding proof of \Cref{thrm:main ultrametric} contains
a procedure for reconstructing $\TT$ from the minimal element of
$\cJ_\TT$ in the lexicographic order when $\TT$ is an ultrametric tree.
If $\TT$ is an arbitrary edge-weighted tree and this procedure is applied
to the minimal element of $\cJ_\TT$ in the lexicographic order, then
it will still produce an ultrametric tree and so a necessary
condition for $\TT$ to be ultrametric is that the joint
probability distribution of the random length sequence of this
ultrametric tree coincides with the joint
probability distribution of $\cW_\TT$.  

Along the same lines,
suppose that $\TT$ is an arbitrary edge-weighted tree and, thinking
of $\TT$ as a real tree, we root it at the unique point $\rho$ such that 
\[
\max_{v \in \LL(\TT)} r_\TT(\rho,v) 
=
r^*
:=
\frac{1}{2}\max_{u \in \LL(\TT)} \max_{v \in \LL(\TT)} r_\TT(u,v).
\]
Then $\rho$ will have $k$ children for some $k$.  Let
$m_i$, $1 \le i \le k$, be the number of leaves $v$ in the
subtree below the $i^{\mathrm{th}}$ child of $\rho$
such that $r_\TT(\rho,v) = r^*$.  It is clear that
$\TT$ is ultrametric if and only if 
$m_1 + \cdots + m_k = n$.  Let $n_1, \ldots, n_\ell$
be a listing of the nonzero terms in the list
$m_1, \ldots, m_k$.  Note for $2 \le j \le \ell$ that
\[
\bP\{W_2 = 2 r^*, \ldots, W_j = j r^*\} 
= 
j ! \frac{1}{n(n-1) \cdots (n-j+1)}
\sum_{1 \le h_1 < \ldots < h_j \le \ell}
n_{h_1} \cdots n_{h_j}
\]
and
\[
\max\{j \ge 2 : \bP\{W_2 = 2 r^*, \ldots, W_j = j r^*\} > 0\}
=
\ell.
\]
Thus, the joint probability distribution of $\cW_\TT$
determines $\ell$ and the values of the elementary symmetric polynomials
of degrees $2 \le j \le \ell$ evaluated at $n_1, \ldots, n_\ell$, and
we want to know whether $n_1 + \cdots + n_\ell$, the value of the
elementary symmetric polynomial of degree $1$ evaluated at $n_1, \ldots, n_\ell$,
is $n$.  The elementary symmetric polynomials of degrees $1,2,\ldots,\ell$
in $\ell$ real variables are algebraically independent over the reals,
and so we cannot expect to recover $n_1 + \cdots + n_\ell$ from
the values of the other elementary symmetric polynomials.  However,
there are inequalities connecting the values of the various elementary
symmetric polynomials that can be used to establish necessary conditions
and sufficient conditions for $\TT$ to be ultrametric.  For example, set
\[
p_1 := \frac{1}{\ell}(n_1 + \cdots + n_\ell)
\]
and
\[
\begin{split}
p_j 
& := 
\frac{1}{\binom{\ell}{j}} 
\sum_{1 \le h_1 < \ldots < h_j \le \ell}
n_{h_1} \cdots n_{h_j} \\
& =
\frac{1}{\binom{\ell}{j}} \frac{n(n-1) \cdots (n-j+1)}{j!}
\bP\{W_2 = 2 r^*, \ldots, W_j = j r^*\}, \quad 2 \le j \le \ell. \\
\end{split}
\]
If $\alpha_1, \ldots, \alpha_\ell$ and $\beta_1, \ldots, \beta_\ell$
are positive constants such that
\[
\alpha_1 + 2 \alpha_2 + \cdots + \ell \alpha_\ell
=
\beta_1 + 2 \beta_2 + \cdots + \ell \beta_\ell
\]
and
\begin{equation}
\label{eq:alpha beta}
\alpha_j + 2 \alpha_{j+1} + \cdots + (\ell-j+1) \alpha_\ell
\ge
\beta_j + 2 \beta_{j+1} + \cdots + (\ell-j+1) \beta_\ell, \quad
2 \le j \le \ell,
\end{equation} 
then, by \cite[Theorem 77, Chapter II]{MR944909}
\[
\prod_{j=1}^\ell p_j^{\alpha_j} \le \prod_{j=1}^\ell p_j^{\beta_j}.
\]
Thus, if $\alpha_2, \ldots, \alpha_\ell$
and $\beta_2, \ldots, \beta_\ell$ satisfy the inequalities
\eqref{eq:alpha beta} and 
\[
\gamma := \sum_{j=2}^\ell j (\beta_j - \alpha_j),
\] 
then
\[
p_1 
\le 
\left(
\prod_{j=2}^\ell p_j^{\beta_j - \alpha_j}
\right)^{\frac{1}{\gamma}}
\]
when $\gamma > 0$,
and the opposite inequality hold
when $\gamma < 0$.  This observation leads to necessary
conditions and sufficient conditions for $\TT$ to be ultrametric.}
\label{rem:more ultrametric determination}
\end{rem}

\begin{rem}
As we will see in \Cref{sec:kary} for 
$k+1$-valent and rooted $k$-ary combinatorial trees, 
a somewhat similar proof argument based on the consideration of length sequences that are minimal with respect
to a suitable order leads to a stronger result in that case. There we can not only determine $\TT$ from the joint probability
distribution of its random length sequence, but if we have a random tree $\cT$ with a fixed number of leaves, then it is possible to determine the distribution of $\cT$ from the joint probability distribution of the 
random length sequence obtained by first picking
a realization of $\cT$ and then independently picking a random ordering of the leaves to build a random length sequence.
 
Formally, we have some space $\bT$ of isomorphism types of trees, 
a corresponding space $\bS$ of possible length sequences, and a probability kernel
$\mu$ from $\bT$ to $\bS$, where, for $\TT \in \bT$,  $\nu(\TT, \cdot)$ is the element of $\cP(\bS)$, 
the space of probability measures on
$\bS$, that is the joint probability distribution of the random length sequence built from $\TT$.  
An affirmative answer to \Cref{q:main}  for a particular $\bT$ means that the map $\TT \mapsto \nu(\TT, \cdot)$ 
from $\bT$ to $\cP(\bS)$ is injective.
Given an element $\mu$ of $\cP(\bT)$, the space of probability measures on $\bT$, let $\mu \nu \in \cP(\bS)$
be defined as usual by $\mu \nu(B) = \int_\bT \nu(\TT,B) \, \mu(d\TT)$ for $B \subseteq \cS$.  The stronger results obtained in
\Cref{sec:kary} say that, in the situations considered there, the map $\mu \mapsto \mu \nu$ 
from $\cP(\bT)$ to $\cP(\bS)$ is injective.

One can ask if an analogous strengthening is also true for ultrametric trees.
A proof along the lines of that given for \Cref{thrm:main kary2} 
doesn't appear to apply immediately in this situation where the relevant space $\bT$ is uncountable
rather than finite.  We leave this as one of many open questions.
\end{rem}

\section{Caterpillar trees: Proof of \Cref{thrm:main caterpillar}}
\label{sec:caterpillar}

Recall that a caterpillar is a (not necessarily simple)
combinatorial tree such that deleting the leaves of the
tree results in a path consisting of $\ell+1$ vertices 
(and hence $\ell$ edges of length $1$).  

\begin{rem}
Choosing one end of the path,
we can label the vertices on path consecutively with $0,1,\ldots,\ell$ 
and denote by $n_r$ the leaves that are attached to vertex $r$ on the
path. Both $n_0$ and $n_\ell$ are non-zero, but the remaining $n_i$ may
be zero.

The isomorphism types of caterpillars with $n$ leaves 
are thus seen to be in a bijective correspondence 
with equivalence classes of nonnegative integer sequences 
$(n_0, n_1, \ldots, n_{\ell-1}, n_\ell)$, where
$n = n_0 + \cdots + n_\ell$
and
$n_0, n_{\ell} \ne 0$,
and we declare that 
$(n_0, n_1, \ldots, n_{\ell-1}, n_\ell)$
and
$(n_\ell, n_{\ell-1}, \ldots, n_1, n_0)$
are equivalent.
\end{rem}

The proof of the following, which establishes the
first claim in \Cref{thrm:main caterpillar}, is straightforward and we omit it.

\begin{prop}{A combinatorial tree $\TT$ with $n$ leaves is a caterpillar with an associated path of length $\ell$ if and only if
$$
\max \{ k : \Prb \{ W_2 = k + 2 \} > 0\} = \ell
$$
and $W_n = \ell + n$ almost surely.
}
\label{prop:det if caterpillar}
\end{prop}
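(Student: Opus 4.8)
The plan is to establish the two implications separately, after translating everything into elementary graph theory using that all edges of a combinatorial tree have length $1$: $W_2 = r_\TT(Y_1,Y_2)$ is then the number of edges on the path between two distinct, uniformly chosen leaves, $W_n = \WW_\TT(\LL(\TT))$ is the total number of edges of $\TT$, and $\{k \ge 0 : \Prb\{W_2 = k+2\} > 0\}$ is exactly the set of values $d-2$ as $d$ ranges over the leaf-to-leaf distances realized in $\TT$. I will assume $\TT$ has at least one non-leaf vertex, since the only other case --- a single edge --- is neither a caterpillar nor satisfies the stated conditions for any $\ell \ge 0$.

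For ``caterpillar $\Rightarrow$ conditions'' I would take a caterpillar with spine $q_0 q_1 \cdots q_\ell$, with $n_r \ge 0$ leaves at $q_r$, $n_0, n_\ell \ge 1$, and $n = n_0 + \cdots + n_\ell$. Counting the $\ell$ spine edges and the $n$ pendent edges gives $W_n = \ell + n$ almost surely. Since $\TT$ is a tree, the path between a leaf at $q_i$ and a leaf at $q_j$ (say $i \le j$) runs through $q_i, q_{i+1}, \ldots, q_j$ and has $(j-i)+2$ edges (read as $2$ when $i=j$, which needs $n_i \ge 2$); hence every realized leaf-to-leaf distance lies in $\{2,\ldots,\ell+2\}$, with $\ell+2$ realized --- by a leaf at $q_0$ and one at $q_\ell$ when $\ell \ge 1$, and by two leaves at $q_0$ when $\ell = 0$. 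This gives $\max\{k : \Prb\{W_2 = k+2\} > 0\} = \ell$.

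For the converse I would start from $W_n = \ell + n$ almost surely, which says $\TT$ has $\ell+n$ edges; since a tree with $n$ leaves and $m$ non-leaf vertices has $n+m-1$ edges, $\TT$ has exactly $m = \ell+1$ non-leaf vertices. Then $\max\{k : \Prb\{W_2 = k+2\} > 0\} = \ell$ supplies leaves $u, v$ with $r_\TT(u,v) = \ell+2$ and none farther apart; writing $u = p_0, p_1, \ldots, p_{\ell+2} = v$ for the (distinct) vertices of the $u$--$v$ path, the interior vertices $p_1, \ldots, p_{\ell+1}$ each have degree $\ge 2$, hence are non-leaves, and being $\ell+1$ in number they must be all of the non-leaf vertices of $\TT$. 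Consequently every leaf is adjacent to one of $p_1, \ldots, p_{\ell+1}$, so deleting the leaves and their incident edges leaves the subgraph induced on $\{p_1, \ldots, p_{\ell+1}\}$; its only edges are the $\{p_i, p_{i+1}\}$, $1 \le i \le \ell$ (a further edge would close a cycle with the path), so it is a path on $\ell+1$ vertices, and $\TT$ is a caterpillar with associated path of length $\ell$.

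I do not expect a genuine obstacle here --- the proof is edge-counting plus the observation that a longest leaf-to-leaf path has $\ell+1$ interior vertices, all non-leaves, which together with the exact count $m = \ell+1$ forces the entire non-leaf set onto that path. The only things to watch are the degenerate cases: the single-edge tree, which should be excluded, and the star ($\ell = 0$), which must be checked by hand in the forward direction.
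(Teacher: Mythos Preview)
Your proof is correct. The paper omits the proof of this proposition as ``straightforward'', so there is no approach to compare against; your argument --- edge-counting from $W_n$ to get exactly $\ell+1$ non-leaf vertices, then using a longest leaf-to-leaf path to exhibit $\ell+1$ interior vertices of degree $\ge 2$ that must therefore exhaust the non-leaves --- is a clean and complete way to fill in the omitted details, and the degenerate cases you flag (the single edge and the star $\ell=0$) are handled correctly.
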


We now turn to the proof of the main claim in \Cref{thrm:main caterpillar}.

\begin{proof}
Consider a box with $n$ tickets.  Each ticket has a label belonging to $\{0,1,\ldots,\ell\}$
and there are $n_i$ tickets with label $i$ for $0 \le i \le \ell$.  Let $X_1, X_2, \ldots, X_n$
be the result of drawing tickets uniformly at random from the box without replacement
and noting their labels.
Set 
$$
K_r := \max_{1 \le j \le r} X_j - \min_{1 \le j \le r} X_j.
$$ 
It is clear that $(W_2, W_3, \ldots, W_n)$ 
has the same joint probability distribution as $(K_2 + 3, K_3 + 3, \ldots, K_n+n)$, 
and so it suffices to show that it is possible to determine
$\{(n_0, n_1, \ldots, n_{\ell-1}, n_\ell), (n_\ell, n_{\ell-1}, \ldots, n_1, n_0)\}$
from a knowledge of the joint probability distribution of $\cK := (K_2, \ldots, K_n)$
(that is, it is possible to determine up to a reflection
the vector that gives the number of tickets with each label).

To begin with, note that, as in \Cref{prop:det if caterpillar},
$$
\max\{k : \bP\{K_2 = k\} > 0\} = \ell,
$$
and so we can determine $\ell$ from the joint probability distribution of $\cK$.  

Observe next that
\[
\begin{split}
\bP\{K_2 = \ell\} 
& = \bP\{(X_1, X_2) \in \{(0,\ell), (\ell,0)\}\} \\
& = 2\frac{n_0 n_\ell}{n(n-1)}, \\
\end{split}
\]
and 
\[
\max\{k : \bP\{K_2 = 0, \ldots, K_k = 0, \, K_{k+1} = \ell\} > 0\}
= n_0 \vee n_\ell.
\]
We can thus determine the multiset $\{n_0, n_\ell\}$
and, in particular, $n_0 + n_\ell$.

For $1 \le r < \frac{\ell}{2}$ we have
\[
\begin{split}
& \bP\{K_2 = r, \, K_3 = \ell \} \\
& \quad =
\bP\{(X_1, X_2, X_3) \in 
\{(0,r,\ell), (r,0,\ell), (\ell,\ell-r,0), (\ell-r,\ell,0)\}
\} \\
& \quad = 
\frac{2 n_0(n_r+n_{\ell-r})n_\ell}{n(n-1)(n-2)}, \\
\end{split}
\]
and so we can determine $n_r+n_{\ell-r}$. If $\ell$ is even,
then
\[
\begin{split}
& \bP\{K_2 = \frac{\ell}{2}, \, K_3 = \ell\} \\
& \quad =
\bP\left\{(X_1, X_2, X_3) \in 
\left\{\left(0,\frac{\ell}{2},\ell\right), \left(\frac{\ell}{2},0,\ell\right), 
\left(\ell,\frac{\ell}{2},0\right), \left(\frac{\ell}{2},\ell,0\right)\right\}
\right\} \\
& \quad = 
\frac{4 n_0 n_{\frac{\ell}{2}} n_\ell}{n(n-1)(n-2)}, \\
\end{split}
\]
and so we can determine $n_{\frac{\ell}{2}}$.

Also,  
\[
\begin{split}
\bP\{K_2 = 0\} 
& = \sum_{i=0}^\ell \bP\{X_1 = r, X_2 = r\} \\ 
& = \frac{\sum_{r=0}^\ell n_r(n_r-1)}{n(n-1)} \\
& = \frac{\sum_{r=0}^\ell n_r^2 - n} {n(n-1)} \\
\end{split}
\]
and, for $1 \le k \le \ell$,
\[
\begin{split}
\bP\{K_2 = k\}
& = \sum_{r=0}^{\ell-k} \bP\{(X_1, X_2) \in \{(r,r+k), (r+k,r)\}\} \\
& = \frac{2 \sum_{r=0}^{\ell-k} n_r n_{r+k}}{n(n-1)}. \\
\end{split}
\]
We can therefore determine $\sum_{r=0}^{\ell-k} n_r n_{r+k}$ for 
$0 \le k \le \ell$. 

We claim that we the information we have just derived suffices to determine  
$\{(n_0, n_1, \ldots, n_{\ell-1}, n_\ell), (n_\ell, n_{\ell-1}, \ldots, n_1, n_0)\}$.
That is, if $n_0', \ldots, n_\ell'$ is a sequence with 
\[
n_0 + \cdots + n_\ell = n_0' + \cdots + n_\ell' = n,
\]
\[
n_r+ n_{\ell-r} = n_r'+ n_{\ell-r}'
\]
for $0 \le r \le \ell$,
and
\[
\sum_{r=0}^{\ell-k} n_r n_{r+k} = \sum_{r=0}^{\ell-k} n_r' n_{r+k}'
\]
for $0 \le k \le \ell$,  then either 
$n_r = n_r'$ for $0 \le r \le \ell$ 
or $n_r = n_{\ell-r}'$ for $0 \le r \le \ell$.

To see that this is so, introduce the Fourier transforms
\[
g(z) := \sum_{k=0}^\ell n_k e^{i z k}
\]
and
\[
g'(z) := \sum_{k=0}^\ell n_k' e^{i z k}
\]
for $z \in \bC$.  These are entire functions that uniquely determine
$n_0, \ldots, n_\ell$ and $n_0', \ldots, n_\ell'$.  Note that
\[
\sum_{k=0}^\ell n_{\ell-k} e^{i z k}
= e^{i z \ell} g(-z),
\]
and a similar formula holds for $g'$.
It will thus suffice to show that either $g(z) = g'(z)$ or
$g(z) = e^{i z \ell} g'(-z)$ (equivalently, 
$g'(z) = e^{i z \ell} g(-z)$).  

It follows from the assumption that
\[
\sum_{r=0}^{\ell-k} n_r n_{r+k} = \sum_{r=0}^{\ell-k} n_r' n_{r+k}'
\]
for $0 \le k \le \ell$
that if we define $N: \bZ \to \bZ$ by
\[
N(j) = 
\begin{cases} 
n_j,& \quad 0 \le j \le \ell, \\
0,& \quad \text{otherwise,}
\end{cases}
\]
and define $N'$ similarly,
then
\[
\sum_{\{r,j \in \bZ: r-j = k\}} N(r) N(j)
=
\sum_{\{r,j \in \bZ: r-j = k\}} N'(r) N'(j)
\]
for all $k \in \bZ$ and hence
$$
g(z) g(-z) = g'(z) g'(-z)
$$
for all $z \in \bC$.  By Theorem 2.2 in \cite{rosenblatt1982structure}, 
 there exist finitely supported functions $C: \bZ \to \bZ$ and 
 $D: \bZ \to \bZ$ such that if we set
\[
\phi(z) := \sum_{k \in \bZ} C(k) e^{i z k}
\]
and
\[
\psi(z) := \sum_{k \in \bZ} D(k) e^{i z k},
\]
then
\[
g(z) = \phi(z) \psi(z)
\]
and
\[
g'(z) = \phi(z) \psi(-z).
\]

It follows from the assumption that
\[
n_r+ n_{\ell-r} = n_r'+ n_{\ell-r}'
\]
for $0 \le r \le \ell$
that
\[
g(z) + e^{i z \ell} g(-z)
=
g'(z) + e^{i z \ell} g'(-z)
\]
for all $z \in \bC$.
Therefore,
\[
\phi(z) \psi(z) + e^{i z \ell} \phi(-z) \psi(-z)
=
\phi(z) \psi(-z) + e^{i z \ell} \phi(-z) \psi(z)
\]
and hence
\[
(\phi(z) - e^{i z \ell} \phi(-z))(\psi(z) - \psi(-z)) = 0
\]
for all $z \in \bC$.  Because the functions 
$z \mapsto \phi(z) - e^{i z \ell} \phi(-z)$ and
$z \mapsto \psi(z) - \psi(-z)$ are both entire, we must have either
that $\phi(z) = e^{i z \ell} \phi(-z)$ for all $z \in \bC$
or $\psi(z) = \psi(-z)$ for all $z \in \bC$.
If $\phi(z) = e^{i z \ell} \phi(-z)$ for all $z \in \bC$, then
\[
g'(z) = \phi(z) \psi(-z) 
= e^{i z \ell} \phi(-z) \psi(-z) = e^{i z \ell} g(-z)
\]
and
$n_i = n_{\ell-i}'$ for $0 \le i \le \ell$.
If $\psi(z) = \psi(-z)$ for all $z \in \bC$, then
\[
g'(z) = \phi(z) \psi(-z) 
= \phi(z) \psi(z) = g(z)
\]
and 
$n_r = n_r'$ for $0 \le r \le \ell$.
\end{proof}


\section{$k+1$-valent and rooted $k$-ary trees}
\label{sec:kary trees}

We now turn our focus to the cases of $(k+1)$-valent and $k$-ary trees. 
Recall that
a {\em $(k+1)$-valent tree} is a tree with all vertices of degree either $k+1$ or $1$. 
For $k \geq 2$ a {\em rooted $k$-ary tree} is a tree with one vertex of degree $k$ 
and the rest of degrees either $k+ 1$ or $1$. We refer to the rooted $2$-ary tree 
as a {\em rooted binary tree}. Note that any $k$-ary tree is obtained by removing
one leaf of a suitable $(k+1)$-valent trees.

Our general proof methodology for these families of trees is similar to that used in 
\Cref{sec:ultrametric} for ultrametric trees. We first define a particular 
class of sequences that can appear as elements of $\cJ_\TT$ 
(the down-split sequences) and a total order on such sequences. 
We then show that the minimal down-split sequence in  $\cJ_{\TT}$ 
uniquely identifies $\TT$.

The idea of the proof is the same for all $k$
and depends on the following fact.

\begin{lemma}{Let $\TT$ be a $(k+1)$-valent tree or a rooted $k$-ary tree and let
$\SS$ be a subtree of $\TT$. Then $\SS$ is a rooted $k$-ary tree if and only if
$$
\# \EE(\SS) = \frac{k}{k - 1}( \# \LL(\SS) - 1).
$$}
\label{lem:k subtree size}
\end{lemma}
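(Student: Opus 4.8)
The plan is to count edges and leaves in subtrees of regular trees via a degree-sum (handshaking) argument, and then to observe that the ratio $\#\EE(\SS)/(\#\LL(\SS)-1)$ is rigid enough to pin down which of the two shapes $\SS$ has.

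First I would set up notation: let $\SS$ be a subtree of $\TT$, let $L = \#\LL(\SS)$ be the number of its leaves, let $V = \#\VV(\SS)$, and let $E = \#\EE(\SS)$, so that $E = V - 1$ since $\SS$ is a tree. The key point is that $\SS$ is obtained by spanning a set of leaves of $\TT$, so every interior vertex of $\SS$ is an interior vertex of $\TT$, hence has degree at least $3$ in $\TT$; but its degree \emph{within} $\SS$ could be smaller. The crucial structural fact (which I would state and use, and which holds because $\SS$ is spanned by leaves) is that every interior vertex of $\SS$ has degree at least $3$ in $\SS$: a degree-$2$ or degree-$1$ interior vertex could be pruned without losing any spanned leaf, contradicting minimality of the spanning subtree. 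Actually, for a clean count I would instead work with the \emph{simplification} of $\SS$, or alternatively allow degree-$2$ vertices and note they don't affect the edge/leaf ratio in the way I want — I would pick whichever bookkeeping is cleanest, most likely suppressing degree-$2$ vertices so that in $\SS$ every interior vertex has degree exactly $3, 4, \ldots, k+1$ (it cannot exceed $k+1$ since that bounds the degree in $\TT$), and the leaves of $\SS$ are a subset of $\LL(\TT)$ together with possibly the designated extra vertex coming from $\TT$'s root in the $k$-ary case.

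The heart of the argument is the handshaking identity $\sum_{v \in \VV(\SS)} d_\SS(v) = 2E$. Writing $I = V - L$ for the number of interior vertices and splitting the sum into leaves (contributing $L$) and interior vertices, I get $2E = L + \sum_{v \text{ interior}} d_\SS(v)$. Now $E = V - 1 = L + I - 1$, so $\sum_{v \text{ interior}} d_\SS(v) = 2(L+I-1) - L = L + 2I - 2$. If $\SS$ is itself a rooted $k$-ary tree, then by definition one interior vertex has degree $k$ and the other $I-1$ have degree $k+1$, giving $\sum d_\SS(v) = k + (I-1)(k+1) = I(k+1) - 1$; combining, $I(k+1) - 1 = L + 2I - 2$, i.e. $I(k-1) = L - 1$, i.e. $I = (L-1)/(k-1)$, whence $E = L + I - 1 = (L-1) + I = (L-1) + (L-1)/(k-1) = \frac{k}{k-1}(L-1)$. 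That proves the ``only if'' direction. For the converse I would argue that among the possible degree sequences of a subtree $\SS$ of a $(k+1)$-valent or $k$-ary tree — namely all interior vertices having degree in $\{3,\ldots,k+1\}$, with at most one allowed to be as low as... — the value $E = \frac{k}{k-1}(L-1)$ forces every interior vertex to have the maximal contribution except possibly one. Concretely, if $a_j$ denotes the number of interior vertices of degree $j$ for $3 \le j \le k+1$, then $\sum_j a_j = I$ and $\sum_j j\,a_j = L + 2I - 2$, and the hypothesis $E = \frac{k}{k-1}(L-1)$ translates (via $E = L+I-1$) into $L - 1 = (k-1)I$, i.e. $\sum_j (j - 2) a_j = L - 2 = (k-1)I - 1 = \sum_j (k-1) a_j - 1$, so $\sum_j (k+1-j) a_j = 1$. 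Since each coefficient $k+1-j \ge 0$ and each $a_j \ge 0$, this forces exactly one interior vertex to have degree $k$ (contributing the single $1$) and all others to have degree $k+1$ — which is precisely the definition of a rooted $k$-ary tree.

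The main obstacle I anticipate is not the algebra but getting the conventions exactly right: in the $k$-ary case $\TT$ has a distinguished degree-$k$ root, and a spanned subtree $\SS$ may or may not contain that root, and the ``leaves'' of $\SS$ in the sense relevant here should be the elements of $\LL(\TT)$ it spans (the root of $\TT$, if it ends up as a degree-$1$ vertex of $\SS$, needs to be handled — recall the remark that attaching an extra vertex to the root of a $k$-ary tree gives a $(k+1)$-valent tree, which is the bookkeeping device to keep in mind). I would therefore be careful to state at the outset exactly what ``subtree'' means here (spanned by a subset of $\LL(\TT)$, with degree-$2$ vertices suppressed, and with the convention on how the root is treated), so that the degree bound ``every interior vertex of $\SS$ has degree between $3$ and $k+1$, with the at-most-one-degree-$k$ phenomenon'' is literally correct; once that is pinned down, the handshaking computation above closes the proof in both directions.
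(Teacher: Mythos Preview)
Your approach is essentially the paper's: both proofs are a handshaking/degree-sum count, and your key identity $\sum_j (k+1-j)\,a_j = 1$ is exactly the equivalence the paper leans on (the paper states it tersely as a chain of ``equivalently'' reductions, whereas you spell out the converse more explicitly, which is a plus).

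The one place to tighten is your treatment of degree-$2$ interior vertices. Your claim that a degree-$2$ interior vertex ``could be pruned without losing any spanned leaf'' is not right: such a vertex lies on the path between two spanned leaves and cannot be removed from the spanning subtree; you can only \emph{suppress} it by merging its two incident edges, but that changes $\#\EE(\SS)$ and hence the very hypothesis you are testing. The clean fix --- and what the paper does --- is simply to allow $j=2$ in your count (so $a_j$ is defined for $2 \le j \le k+1$). Your identity $\sum_{j}(k+1-j)\,a_j = 1$ then does the work automatically: for $k \ge 3$ the coefficient $k-1 \ge 2$ on $a_2$ forces $a_2 = 0$, and for $k=2$ it forces $a_2 = 1$ (the root) --- in either case exactly the rooted $k$-ary structure. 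With that adjustment the argument is complete and matches the paper's.
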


\begin{proof}
Because $\SS$ is a subtree of $\TT$, 
every interior vertex of $\SS$ has degree at most $k+1$. 
Write $d_1 := \# \LL(\SS), d_2, \ldots, d_{k+1}$ for the number of vertices of $\SS$ 
of degrees $1, 2, \ldots, k+1$. We need to show that $d_j=0$ for $1 < j \le k-1$ and
$d_k=1$, or, equivalently, that $d_k=1$ and 
$d_{k+1} = \sum_{j=2}^{k+1} d_j - 1
= \# \VV(\SS) - d_1 - 1 
= \# \EE(\SS) - d_1$.
This is in turn equivalent to showing that
\[
\sum_{j=2}^{k+1} j d_j = k + (k+1) (\# \EE(\SS) - d_1),
\]
which, by the ``handshaking identity''
\[
2 \# \EE(\SS) = \sum_{j=1}^{k+1} j d_j,
\]
becomes
\[
2 \# \EE(\SS) - d_1 = k + (k+1) (\# \EE(\SS) - d_1)
\]
or, upon rearranging,
\[
\# \EE(\SS) = \frac{k}{k-1} (d_1 - 1) = \frac{k}{k-1} (\# \LL(\SS) - 1).
\]
\end{proof} 

For simplicity of notation we present the details
of the proof for the case of (unrooted) $3$-valent trees
and rooted binary trees
(that is, $k=2$). 
We end in \Cref{sec:kary} with a discussion of the extension 
to general $k$.

\subsection{$3$-valent and rooted binary trees}
\label{sec:binarytrees}
%
%
Our proof of \Cref{thrm:main kary1} begins with an analysis of random length sequences 
for marked (also known as planted) $3$-valent trees. A {\em marked $3$-valent tree} 
$(\TT, v)$ is an $3$-valent tree $\TT$ and a distinguished leaf $v$ of $\TT$. 
We define the {\em modified random length sequence} $\cW_{(\TT, v)}$ of $(\TT, v)$ 
to be the random length sequence $\cW_{\TT}$ of $\TT$ conditioned on $Y_1 = v$.

\subsection{Down-split sequences}

We need to distinguish some particular sequences that
appear in the support of $\cW_{(\TT,v)}$.

\begin{rem}
{As usual, we can define a partial order on $\VV(\TT)$ by declaring
that $x$ precedes $y$ if $x \ne y$ is on the path between $v$ and $y$, and we can extend this
partial order to a total order $<$ such that if $w,x,y,z$ are such that
$w$ and $x$ are not comparable in the partial order  but $w < x$, $w$ precedes $y$
in the partial order,
and $x$ precedes $z$ in the partial order, 
then $y < z$.  Such a total order corresponds to embedding $\TT$
in the plane and listing the elements of $\VV(\TT)$ in the order
they are encountered as one walks around  $\TT$ starting from $v$.

Suppose that $v=y_1 < y_2 < \ldots < y_n$ is the ordered
listing of $\LL(\TT)$.  Set
$s_k = \WW_\TT(\{y_1, \ldots, y_k\})$, $2 \le k \le n$.
If $s_k = 2k-2$, then the subtree spanned by the $k$ leaves
$\{y_1, \ldots, y_k\}$ has $2k-2$ edges and hence, by \Cref{lem:k subtree size},
this subtree is a binary tree.  If we write $o$ for the vertex adjacent to the marked
leaf $v$,  denote by $v',v''$ the other two vertices adjacent to $o$, and suppose that
$v' < v''$, then it must be the case that 
$\{y_2, \ldots, y_{k_s}\} = \{y \in \LL(\TT) : v' \le y\}$.  Write $\TT'$ 
(respectively, $\TT''$) for
the subtree of $\TT$ consisting of $w$ and the vertices $u$ such that $v'$ 
(respectively, $v''$) is on the path from $o$ to $u$.  The sequence
$(s_2', \ldots, s_{n'}'):= (s_2-1, \ldots, s_{k_s}-1)$ satisfies
$s_k' = \WW_{\TT'}(o, y_2, \ldots, y_k)$ for $2 \le k \le n' = k_s$.
The sequence
$(s_2'', \ldots, s_{n''}'' := (s_{k_s + 1} - (2k_s - 2), \ldots, s_{n} - (2k_s - 2) )$
satisfies 
$s_k'' = \WW_{\TT''}(o, y_{k_s + 1}, \ldots, y_{k_s+ k - 1})$ for 
$2 \le k \le n'' = n - k_s + 1$.}
\label{rem:down-split construction}
\end{rem}

\begin{defn}
\label{def:downsplit}
A {\em down-split sequence} is an element of the class of increasing sequences of positive
integers defined recursively as follows.
The sequence 
$$
s = (1)
$$ 
is a down-split sequence.

A sequence $s = (s_2, \ldots, s_n)$, $n > 2$, is down-split if 
\[
\{ 2 \leq k < n \colon s_k = 2k - 2 \} \ne \emptyset
\]
and, setting
$$
k_s = \inf \{ 2 \leq k < n \colon s_k = 2k - 2 \},
$$
\begin{itemize}
\item $(s_2 - 1, \ldots, s_{k_s} - 1)$ is down-split,
\item $(s_{k_s + 1} - (2k_s - 2), \ldots, s_{n} - (2k_s - 2) )$ is down-split.
\end{itemize}
The index $k_s$ is the {\em splitting index} of $s$. 
\end{defn}

\begin{ex}
{For $n=3$, the sequence $s = (s_2,s_3) = (2,3)$ is a down-split sequence.  Here $k_s = 2$,
$(s_2 - 1, \ldots, s_{k_s} - 1) = (1)$ and 
$(s_{k_s + 1} - (2k_s - 2), \ldots, s_{n} - (2k_s - 2) ) = (1)$.}
\end{ex}

The following result is immediate from \Cref{rem:down-split construction}.

\begin{lemma}{For every marked $3$-valent tree $(\TT,v)$ there is at least one down-split sequence $s$ with
$$
\Prb \{ \cW_{(\TT,v)} = s \} > 0.
$$}
\label{lem:exist of split}
\end{lemma}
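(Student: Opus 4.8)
The plan is to exhibit an explicit down-split sequence in the support of $\cW_{(\TT,v)}$, namely the length sequence associated with the planar ordering of the leaves of $\TT$ relative to the marked leaf $v$ described in \Cref{rem:down-split construction}. Concretely, I would fix the total order $<$ on $\VV(\TT)$ from that remark, let $v = y_1 < y_2 < \cdots < y_n$ be the induced ordering of $\LL(\TT)$, and put $s_k := \WW_\TT(\{y_1,\ldots,y_k\})$ for $2 \le k \le n$ and $s := (s_2,\ldots,s_n)$. Two things then need checking: that $\Prb\{\cW_{(\TT,v)} = s\} > 0$, and that $s$ is a down-split sequence. The first is immediate: conditionally on $Y_1 = v$ the list $(Y_2,\ldots,Y_n)$ is uniform over the $(n-1)!$ orderings of $\LL(\TT) \setminus \{v\}$, so the event $\{(Y_2,\ldots,Y_n) = (y_2,\ldots,y_n)\}$ has probability $1/(n-1)! > 0$, and on that event $\cW_{(\TT,v)} = s$.

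The substantive point — that $s$ is down-split — I would prove by induction on $n = \#\LL(\TT)$, following the recursion in \Cref{def:downsplit}. For $n = 2$ the tree is a single edge, $s = (1)$, and we are done. For $n > 2$, let $o$ be the neighbor of $v$, let $v' < v''$ be the other two neighbors of $o$, and split $\TT$ at $o$ into the marked $3$-valent trees $(\TT', o)$ and $(\TT'', o)$ of \Cref{rem:down-split construction}, where $\TT'$ consists of $o$ together with the part of $\TT$ on the $v'$-side. That remark tells us that $\{y_2,\ldots,y_{k_s}\} = \{y \in \LL(\TT) : v' \le y\}$, where $k_s$ is the splitting index of $s$; consequently $\TT'$ has $k_s < n$ leaves, $\TT''$ has $n - k_s + 1 < n$ leaves, and the set $\{2 \le k < n : s_k = 2k - 2\}$ is nonempty with infimum exactly $k_s$ (so the definition applies). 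The same remark identifies $(s_2 - 1,\ldots, s_{k_s} - 1)$ with the length sequence obtained by applying the planar-ordering construction to $(\TT', o)$, and $(s_{k_s+1} - (2k_s - 2),\ldots, s_n - (2k_s-2))$, reindexed to start at $2$, with the one obtained from $(\TT'', o)$. By the inductive hypothesis both are down-split, so $s$ is down-split by \Cref{def:downsplit}, and the induction closes.

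The only place where care is needed — and the one genuine obstacle — is the identification of the splitting index: one must know that the subtree $\SS_k$ spanned by $\{y_1,\ldots,y_k\}$ first acquires $2k-2$ edges (equivalently, by \Cref{lem:k subtree size}, first becomes a rooted binary tree) precisely when $k = 1 + \#\{y : v' \le y\}$, and not before. This is exactly the content of \Cref{rem:down-split construction}: for smaller $k$ the vertex of $\SS_k$ at which the first not-yet-collected leaf of $\TT'$ is attached has degree $2$ in $\SS_k$, so $\SS_k$ then has a degree-$2$ vertex other than $o$ and cannot be a rooted binary tree. Granting that remark, the rest of the argument is just the recursive unwinding of the length sequence across the split at $o$, which is routine.
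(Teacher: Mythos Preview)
Your proposal is correct and takes essentially the same approach as the paper: the paper's entire proof is the sentence ``The following result is immediate from \Cref{rem:down-split construction},'' and your argument is precisely the inductive unwinding of the construction in that remark, with the additional care you note about identifying the splitting index. The one point worth tightening is presentational rather than mathematical --- you use $k_s$ both for the splitting index of $s$ and for $1 + \#\{y : v' \le y\}$ before you have shown they coincide; it would be cleaner to define the latter first and then verify it equals $\inf\{2 \le k < n : s_k = 2k-2\}$ via the degree-$2$-vertex argument you give.
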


We record the following fact for later use.

\begin{lemma}{If $s = (s_2,\ldots, s_n)$ is a down-split sequence then 
$s_n = 2n - 3$.}
\label{lem:size of downsplit}
\begin{proof}
This follows easily by induction. If $s$ splits at $k_s$, then, as 
$$
(s_2', \ldots, s_{n'}')
=
(s_{k_s + 1} - (2k_s - 2), \ldots, s_n - (2k_s - 2))
$$
is a down-split sequence with $n' = n - k_2 + 1$, we have by the inductive hypothesis that
$$
s_{n} - (2k_s - 2) = 2( n - k_s + 1) - 3
$$ 
and the claim follows.
\end{proof}
\end{lemma}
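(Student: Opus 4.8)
The plan is to argue by strong induction on the integer $n\ge 2$ indexing a down-split sequence $s=(s_2,\dots,s_n)$. The key point is that the \emph{second} recursive block in \Cref{def:downsplit} is the one to exploit: it is a genuine down-split sequence of strictly smaller length, and its final coordinate equals $s_n$ up to an explicit shift, so it directly controls the quantity we care about. (Feeding the first block $(s_2-1,\dots,s_{k_s}-1)$ into the induction would only re-derive the defining relation $s_{k_s}-1 = 2k_s-3$, which is less useful.)

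For the base of the recursion, $s=(1)$, so $n=2$ and $s_n = s_2 = 1 = 2\cdot 2 - 3$, as required.

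For the inductive step, let $s=(s_2,\dots,s_n)$ with $n>2$ be down-split with splitting index $k_s$, and assume the claim for every down-split sequence of smaller length. By the definition of the splitting index we have $2\le k_s<n$, so
\[
(s''_2,\dots,s''_{n''}) := \bigl(s_{k_s+1}-(2k_s-2),\ \dots,\ s_n-(2k_s-2)\bigr)
\]
is a down-split sequence with $n'' = n-k_s+1$ and $2\le n''<n$. The inductive hypothesis therefore applies and gives $s''_{n''} = 2n''-3$, i.e. $s_n-(2k_s-2) = 2(n-k_s+1)-3$; solving for $s_n$ yields $s_n = 2n-3$, which closes the induction.

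I do not expect a genuine obstacle here: the substantive content is a one-line index computation. The only thing that warrants a moment's care is the bookkeeping that the second block really is a legitimate down-split sequence of strictly smaller length — that is, $2\le n-k_s+1<n$ — and this is immediate from $2\le k_s<n$.
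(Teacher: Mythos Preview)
Your proof is correct and follows exactly the same route as the paper: both argue by induction, apply the inductive hypothesis to the second recursive block $(s_{k_s+1}-(2k_s-2),\dots,s_n-(2k_s-2))$ of length $n-k_s+1$, and read off $s_n=2n-3$ from the resulting equation. You supply a bit more detail (the base case and the check that $2\le n-k_s+1<n$), but the argument is the same.
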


\begin{ex}
\label{ex:nonunique split seq}
Given any down-split sequence $s$, it is possible to reverse the argument in \Cref{rem:down-split construction}
 and construct a marked $3$-valent tree with
a suitable total ordering on its vertices such that $s$ is the corresponding down-split sequence.
However, a marked $3$-valent tree $(\TT,v)$ is not uniquely identified by an arbitrary 
down-split sequence in the support of $\cW_{(\TT,v)}$, as the example in 
\Cref{fig:DownSplitCex} shows.

\begin{figure}[ht]
    \centering
    \includegraphics[width=0.4\textwidth]{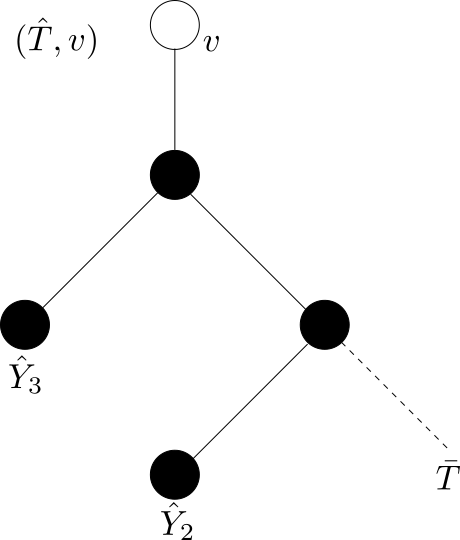}
    \hspace{5 pc}
    \includegraphics[width=0.4\textwidth]{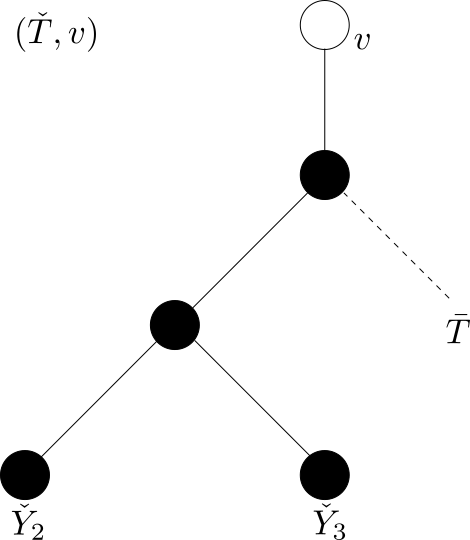}
    \caption{Two marked binary trees $(\hat \TT,v)$ and $(\check \TT,v)$ with particular realizations of the random selection of leaves.}
    \label{fig:DownSplitCex}
\end{figure}

Write $(\hat Y_1, \ldots, \hat Y_n)$ and $(\check Y_1, \ldots, \check Y_n)$ 
for the random selections of the leaves of $\hat \TT$ and $\check \TT$.
Suppose that the realizations are such that $\hat Y_k = \check Y_k \in \bar \TT$ for $4 \le k \le n$  
and that these leaves of the subtree $\bar \TT$ appear 
in an order of the type discussed in \Cref{rem:down-split construction}. The corresponding
realizations for the modified random length sequences are equal.  The common value $(3,4,\ldots)$ is a down-split sequence with
splitting index $3$.  Thus, two non-isomorphic marked $3$-valent trees can have a common down-split sequence in the
supports of their modified random length sequences.  Note that the common down-split sequence results from taking the leaves
of $\check \TT$ according to an order of the type described in
\Cref{rem:down-split construction}, but this is not the case for $\hat \TT$.
\end{ex}

With \Cref{ex:nonunique split seq} in mind we see that it would be useful to have a way of recognizing down-split
sequences in the support of $\cW_{(\TT,v)}$ that result from realizations where the leaves are selected in an order that arises
from a suitable total order on the vertices of $\TT$.  The key is the following total order on down-split sequences.
We re-use the notation $\prec$ that was used in \Cref{sec:ultrametric} for the lexicographic order.

\begin{defn}
Define a total order $\prec$ on the set of down-split sequences of a given length
recursively as follows. Firstly, $(1) \prec (1)$ does not hold.
Next, let $s, r$ be down-split sequences indexed by $\{2, \ldots, n\}$ 
with respective splitting indices $k_s$ and $k_r$.  Set
$$
s' = (s_2 - 1, \ldots, s_{k_s} - 1 ), 
\quad
r' = ( r_2 - 1, \ldots, r_{k_r} - 1),
$$
and
$$
s'' = (s_{k_s + 1} - (2k_s - 2), \ldots, s_n - (2k_{s} - 2) ), 
\quad
r'' = (r_{k_r + 1} - (2k_r - 2), \ldots, r_n - (2k_{r} - 2) ).
$$
Declare that
$$
s \prec r
$$
if 
\[
k_s < k_r
\] 
or 
\[
k_s = k_r \quad \text{and} \quad s' \prec r'
\]
or
\[
k_s = k_r \quad \text{and} \quad s' = r' \quad \text{and}  \quad s'' \prec r''.
\]
\end{defn}

The next result follows easily by induction.

\begin{lemma}{The binary relation $\prec$ is a total order on the set of 
down-split sequences of a given length.}
\end{lemma}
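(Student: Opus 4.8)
The plan is to proceed by induction on the length parameter $n$, where a down-split sequence ``of length $n$'' means one indexed by $\{2,\ldots,n\}$. At each stage I would check the three defining properties of a strict total order: irreflexivity (which is built into the base case via the stipulation that $(1)\prec(1)$ does not hold), transitivity, and trichotomy. The base case $n=2$ is immediate, since $(1)$ is the unique down-split sequence of length $2$ and the empty relation on a one-element set is vacuously a strict total order.

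For the inductive step, fix $n>2$ and assume the claim for all lengths $m<n$. The structural fact I would use is that the assignment $s\mapsto(k_s,s',s'')$ of \Cref{def:downsplit} is a bijection between down-split sequences of length $n$ and triples $(k,\sigma',\sigma'')$ with $2\le k<n$, $\sigma'$ down-split of length $k$, and $\sigma''$ down-split of length $n-k+1$; surjectivity is the reverse construction mentioned in \Cref{ex:nonunique split seq}, and injectivity is clear because $s$ is recovered from its triple by $s_j=s_j'+1$ for $2\le j\le k_s$ and $s_j=s_{j-k_s+1}''+(2k_s-2)$ for $k_s<j\le n$. Since $2\le k_s\le n-1$, both $s'$ and $s''$ have length strictly less than $n$, so the inductive hypothesis applies to them; and when $k_s=k_r$ the sequences $s'$ and $r'$ have equal length, as do $s''$ and $r''$, so the comparisons appearing in the definition of $\prec$ are always between sequences the inductive hypothesis can handle.

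Granting all this, the order $\prec$ on length-$n$ sequences is precisely the lexicographic order induced on the encoding triples by the usual order on the integer $k_s$ and by the inductively available orders $\prec$ on the shorter down-split sequences. It is then routine that such a lexicographic combination of strict total orders along an injective encoding is a strict total order: irreflexivity of $\prec$ at length $n$ follows from $k_s\not< k_s$ and the irreflexivity supplied by the inductive hypothesis at lengths $k_s$ and $n-k_s+1$; transitivity follows from the standard three-case lexicographic argument together with transitivity of the inductive orders; and trichotomy follows by comparing $k_s$ with $k_r$, then (on a tie) $s'$ with $r'$ via the inductive hypothesis, then (on a further tie) $s''$ with $r''$, the final ``all equal'' case forcing $s=r$ by injectivity of the encoding.

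I do not expect a genuine obstacle: the content of the lemma is just that a recursively defined lexicographic order is well behaved, and the induction is the natural bookkeeping device. The only point deserving a moment's care is verifying that the two recursive components of the encoding have lengths in the range covered by the inductive hypothesis and are only ever compared with components of the same length --- precisely what the bounds $2\le k_s\le n-1$ ensure.
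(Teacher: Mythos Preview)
Your proposal is correct and follows exactly the approach the paper has in mind: the paper simply asserts that the result ``follows easily by induction'' without supplying details, and your induction on $n$ via the encoding $s\mapsto(k_s,s',s'')$ is the natural way to fill that in. One small remark: surjectivity of the encoding is true but not actually needed for the lemma --- injectivity alone suffices for trichotomy --- so you could trim that step if you wished.
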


\begin{defn}
The {\em minimal down-split sequence} for a marked $3$-valent tree $(\TT,v)$ is the minimal element 
(with respect to the total order $\prec$) of the set
$$
\{ s \text{ down-split }\colon \Prb \{ \cW_{(\TT,v)} = s \} > 0 \}.
$$
\end{defn}


We now proceed to establish some results that culminate
in showing that $(\TT,v)$ is determined by its minimal down-split sequence.

\begin{lemma}{Let $(\TT,v)$ be a marked $3$-valent tree, with modified random length sequence
$\cW_{(\TT, v)} = (W_2, \ldots, W_n)$  constructed from the random sequence of leaves
$(Y_1, \ldots, Y_n)$ with $Y_1 = v$.  
Denote by $o$ the vertex adjacent to the marked
leaf $v$ and denote by $v',v''$ the other two vertices adjacent to $o$.
Write $\TT'$ 
(respectively, $\TT''$) for
the subtree of $\TT$ consisting of $o$ and the vertices $u$ such that $v'$ 
(respectively, $v''$) is on the path from $o$ to $u$.
Set
$$
m := \inf\{ k  \colon \Prb \{ W_k = 2k - 2 \} > 0 \}.
$$
Then $W_m = 2m - 2$ if and only if
$$
Y_2, \ldots, Y_m \in \LL(\TT') \text{ and } Y_{m + 1}, \ldots, Y_n \in \LL(\TT''),
$$
or {\em vice versa}.}
\label{lem:split subtrees}
\end{lemma}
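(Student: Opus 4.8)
The plan is to recast the identity $W_m = 2m-2$ as a structural statement about spanned subtrees via \Cref{lem:k subtree size}, and then play that statement off against the minimality of $m$. Write $\SS_k$ for the subtree of $\TT$ spanned by $\{v, Y_2, \ldots, Y_k\}$; since every edge has length one, $W_k = \#\EE(\SS_k)$ while $\#\LL(\SS_k) = k$, so \Cref{lem:k subtree size} (taken with $k=2$) says that $W_k = 2k-2 = 2(\#\LL(\SS_k)-1)$ if and only if $\SS_k$ is a rooted binary tree, i.e.\ has exactly one degree-two vertex (its root), all other interior vertices having degree three. Since $o$ separates $v$ from the remaining leaves, $o \in \SS_k$ for all $k \ge 2$, and there $o$ is adjacent to $v$ and to whichever of $v', v''$ lies below a chosen selected leaf; thus $\deg_{\SS_k}(o) \in \{2,3\}$, and $\deg_{\SS_k}(o) = 3$ precisely when at least one of $Y_2, \ldots, Y_k$ lies in $\TT'$ and at least one lies in $\TT''$ (no $Y_i$ being $o$).

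For the direction asserting that the split implies $W_m = 2m-2$, suppose $Y_2, \ldots, Y_m \in \LL(\TT')$ and $Y_{m+1}, \ldots, Y_n \in \LL(\TT'')$, the reversed case being identical. Since the leaves of $\TT$ inside $\TT'$ and those inside $\TT''$ together exhaust $\LL(\TT)\setminus\{v\}$, a count of leaves forces $\{Y_2,\ldots,Y_m\}$ to be exactly the set of leaves of $\TT$ lying in $\TT'$. Then $\SS_m$ is the union of the pendant edge $\{v,o\}$ with the whole of $\TT'$, which is a rooted binary tree with root $o$ on $m$ leaves, so $W_m = \#\EE(\SS_m) = 2m-2$ by \Cref{lem:k subtree size}.

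For the converse, suppose $W_m = 2m-2$, so $\SS_m$ is a rooted binary tree. I first claim $\deg_{\SS_m}(o) = 2$. If not, then $\deg_{\SS_m}(o) = 3$, so $\SS_m$ meets both $\TT'$ and $\TT''$; let $A$, respectively $B$, be the set of $Y_i$ with $2 \le i \le m$ lying in $\TT'$, respectively $\TT''$, with $|A| = p \ge 1$, $|B| = q \ge 1$, and $p+q = m-1$. The unique degree-two vertex of $\SS_m$ lies on one side, say inside $\TT'$ (the other case is symmetric, with the roles of $A$ and $B$ interchanged). Then the part of $\SS_m$ inside $\TT''$ has no degree-two vertex, so the subtree of $\TT$ spanned by $\{v\}\cup B$ — which is that part together with the edge $\{v,o\}$ — is a rooted binary tree with $q+1$ leaves and hence, by \Cref{lem:k subtree size}, exactly $2q = 2(q+1)-2$ edges. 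Consequently $\Prb\{W_{q+1} = 2(q+1)-2\} > 0$, and since $q+1 \le m-1 < m$ this contradicts the definition of $m$. Hence $\deg_{\SS_m}(o)=2$, so all of $Y_2,\ldots,Y_m$ lie in a single one of $\TT',\TT''$, say $\TT'$. Writing $\SS'$ for the portion of $\SS_m$ inside $\TT'$, the only degree-two vertex of $\SS_m$ is $o$ and $o$ has degree one in $\SS'$, so $\SS'$ has no degree-two vertex; this forces $\SS' = \TT'$, for otherwise, picking a leaf $\ell$ of $\TT'$ not in $\SS'$ and letting $w$ be the last vertex of the path from $o$ to $\ell$ that lies in $\SS'$, the vertex $w$ would be an interior vertex of $\TT'$ of degree three having degree two in $\SS'$. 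Therefore $\{Y_2,\ldots,Y_m\}$ exhausts the leaves of $\TT$ in $\TT'$ and $\{Y_{m+1},\ldots,Y_n\}$ exhausts those in $\TT''$; the ``vice versa'' alternative is the case where $o$ is instead joined to $v''$ in $\SS_m$.

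The main obstacle is the elimination of the case $\deg_{\SS_m}(o) = 3$ in the converse: this is where the minimality of $m$ is used in an essential way, by extracting from a hypothetical spanning subtree that branches at $o$ a strictly smaller spanning subtree that is still of rooted-binary type. The remaining steps are a direct application of \Cref{lem:k subtree size} together with the elementary observation that a subtree of a $3$-valent tree in which a would-be branch point drops to degree two cannot be a rooted binary tree.
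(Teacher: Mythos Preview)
Your proof is correct and rests on the same structural tool as the paper's, namely \Cref{lem:k subtree size}, but the two arguments diverge in how they eliminate the case where the spanned subtree $\SS_m$ meets both $\TT'$ and $\TT''$. The paper first invokes the bound $m \le \#\LL(\TT') \wedge \#\LL(\TT'')$, which it imports from the down-split construction preceding \Cref{lem:exist of split}; this bound forces both intersections to be proper, which in turn produces a degree-two vertex on each side and hence two degree-two vertices in $\SS_m$, contradicting the rooted-binary structure. You instead exploit the minimality of $m$ directly: since $\SS_m$ has a single degree-two vertex and it is not $o$, one side of $\SS_m$ is free of degree-two vertices, and adjoining the edge $\{v,o\}$ to that side yields a strictly smaller rooted binary subtree witnessing $\Prb\{W_{q+1}=2(q+1)-2\}>0$ with $q+1<m$. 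Your route is a little more self-contained, as it does not need the earlier construction, while the paper's route is a little more symmetric, dispatching both the ``hits both sides'' and ``proper subset of one side'' cases by the same two-degree-two-vertices contradiction. The remaining step, that once $\deg_{\SS_m}(o)=2$ the subtree $\SS'$ must exhaust $\TT'$, is argued identically in both proofs.
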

\begin{proof}
If $Y_2, \ldots, Y_m \in \LL(\TT')$ and $Y_{m + 1}, \ldots, Y_n \in \LL(\TT'')$, then the subtree spanned by $\{v,Y_2, \ldots, Y_m\}$ 
consists of the leaf $v$ adjoined to $\TT'$ via an edge to the vertex $o$. This subtree is a rooted binary tree with root $o$. 
It follows from \Cref{lem:k subtree size} that $W_m = 2m - 2$.

For the other direction, assume that $W_m = 2m -2$. By \Cref{lem:k subtree size}, the subtree $\SS$ spanned by $\{v, Y_2, \ldots, Y_m\}$ is a rooted binary tree with $m$ leaves. We have $\LL(\SS) \subseteq \LL(\TT)$, $v \in \LL(\SS)$, and 
$\LL(\SS) \setminus \{v\} \subseteq (\LL(\TT') \setminus \{o\}) \cup (\LL(\TT'') \setminus \{o\})$.
We need to show that $\SS$ consists of the leaf $v$ adjoined to either $\TT'$ or $\TT''$ via an edge to the vertex $o$
that is common to both $\TT'$ and $\TT''$. 

By the construction prior to the statement of \Cref{lem:exist of split} we know that 
\[
m \leq \# \LL(\TT') \wedge \# \LL(\TT'')
\]
and so if $\LL(\TT') \setminus \{o\} \subseteq \LL(\SS) \setminus \{v\}$, then 
$\LL(\TT'') \cap \LL(\SS) = \LL(\TT'') \setminus \{o\} \cap \LL(\SS) \setminus \{v\} = \emptyset$ and similarly with the roles of $\TT'$
and $\TT''$ reversed. 

We can rule out the possibility that $\LL(\SS)$ intersects both 
$\LL(\TT')$ and $\LL(\TT'')$ as follows. If $\LL(\TT') \cap \LL(\SS) \ne \emptyset$
and $\LL(\TT'') \cap \LL(\SS) \ne \emptyset$, then $\LL(\TT') \cap \LL(\SS)$ must be a proper subset of $\LL(\TT') \setminus \{o\}$
and $\LL(\TT'') \cap \LL(\SS)$ must be a proper subset of $\LL(\TT'') \setminus \{o\}$.
 If $\LL(\TT') \cap \LL(\SS)$ is a proper, nonempty subset of $\LL(\TT') \setminus \{o\}$, then $\SS$ must have a degree $2$ vertex that belongs to $\VV(\TT') \setminus \{o\}$, and similarly for $\TT''$. However, $\SS$ is a rooted binary tree and cannot have
two or more vertices of degree $2$.

Finally, we need to rule out the possibility of $\LL(\SS) \setminus \{v\}$ is a proper subset of $\LL(\TT') \setminus \{o\}$ or 
$\LL(\TT'') \setminus \{o\}$. However, if $\LL(\SS) \setminus \{v\}$ is a proper subset of $\LL(\TT') \setminus \{o\}$, then $\SS$ would have at least one degree $2$ vertex in that belongs to $\VV(\TT') \setminus \{o\}$ as well as the degree $2$ vertex $o$, which contradicts
$s$ being a rooted binary tree. The same argument holds with $\TT''$ in place of $\TT'$.
\end{proof}

\begin{cor}{Let $(\TT,v)$ be a marked $3$-valent tree
with modified random length sequence $\cW_{(\TT, v)} = (W_2, \ldots, W_n)$.
Then
\[
m:= \inf\{ k  \colon \Prb \{ W_k = 2k - 2 \} > 0 \}
\]
is the splitting index for the minimal down-split sequence for $(\TT,v)$.}
\label{cor:split subtrees}
\end{cor}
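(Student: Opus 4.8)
The plan is to prove the two inequalities $k_s \ge m$ and $k_s \le m$, where $k_s$ denotes the splitting index of the minimal down-split sequence $s$ for $(\TT,v)$; the corollary is then immediate.

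The inequality $k_s \ge m$ requires almost nothing. Since $s$ lies in the support of $\cW_{(\TT,v)}$, we have $\Prb\{\cW_{(\TT,v)} = s\} > 0$, and $s_{k_s} = 2k_s - 2$ by the definition of the splitting index; hence $\Prb\{W_{k_s} = 2k_s - 2\} > 0$, so $m \le k_s$.

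For $k_s \le m$ I would use that the order $\prec$ compares down-split sequences of a fixed length first by their splitting indices, a smaller splitting index being $\prec$-smaller, so that $k_s$ is the least splitting index occurring among all down-split sequences in the support of $\cW_{(\TT,v)}$; it is therefore enough to exhibit one down-split sequence in the support whose splitting index is $m$. Since $\Prb\{W_m = 2m-2\} > 0$, \Cref{lem:split subtrees} provides a realization of $(Y_1,\ldots,Y_n)$ with $Y_1 = v$ on which $W_m = 2m-2$ and, after relabelling $\TT'$, $\TT''$ if necessary, $Y_2,\ldots,Y_m \in \LL(\TT')$ while $Y_{m+1},\ldots,Y_n \in \LL(\TT'')$; as $Y_2,\ldots,Y_n$ runs through $(\LL(\TT')\setminus\{o\}) \sqcup (\LL(\TT'')\setminus\{o\})$, this forces $\{Y_2,\ldots,Y_m\} = \LL(\TT')\setminus\{o\}$ and hence $m = \#\LL(\TT')$. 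Now consider the positive-probability realization in which $Y_1 = v$, the leaves $Y_2,\ldots,Y_m$ enumerate $\LL(\TT')\setminus\{o\}$ in the order coming from a suitable total order on $\VV(\TT')$ (with $o$ in the role of the marked leaf, as in \Cref{rem:down-split construction}), and $Y_{m+1},\ldots,Y_n$ enumerate $\LL(\TT'')\setminus\{o\}$ similarly for $\VV(\TT'')$. Such an order is a suitable total order on $\VV(\TT)$ (explore $\TT'$ first, then $\TT''$), so by \Cref{rem:down-split construction} the corresponding length sequence $r := \cW_{(\TT,v)}$ is a down-split sequence lying in the support of $\cW_{(\TT,v)}$. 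By construction the leaves split between $\TT'$ and $\TT''$ exactly at position $m = \#\LL(\TT')$, so \Cref{lem:split subtrees} gives $r_m = 2m-2$, while $r_k \ne 2k-2$ for every $k<m$ because $\Prb\{W_k = 2k-2\} = 0$ there. Hence the splitting index of $r$ is exactly $m$, and minimality of $s$ with respect to $\prec$ gives $k_s \le m$; together with the previous inequality this yields $k_s = m$, as required.

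The step I expect to be most delicate is the passage from the analytically defined $m$ to the combinatorial identity $m = \#\LL(\TT')$ after the correct choice of which of the two subtrees at $o$ is called $\TT'$: this is precisely what \Cref{lem:split subtrees} was built to supply, so the corollary is in essence a repackaging of that lemma together with the construction in \Cref{rem:down-split construction}, but one must check carefully that the least index $k$ with $r_k = 2k-2$ really equals $m$ (and not something strictly smaller) and keep the relabelling of $\TT'$, $\TT''$ straight throughout.
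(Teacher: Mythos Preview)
Your proof is correct and follows essentially the same two-inequality approach as the paper: the bound $m\le k_s$ is identical, and for $k_s\le m$ both you and the paper identify $m$ with $\#\LL(\TT')\wedge\#\LL(\TT'')$ via \Cref{lem:split subtrees} and then invoke the construction of \Cref{rem:down-split construction} to produce a down-split sequence with splitting index $m$. The only difference is that you spell out explicitly why the constructed sequence $r$ has splitting index \emph{exactly} $m$ (using $\Prb\{W_k=2k-2\}=0$ for $k<m$), a point the paper leaves implicit.
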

\begin{proof}
If $k_s$ is the splitting index of any down-split sequence $s$ in the support of $\cW_{(\TT,v)}$, then 
$s_{k_s} = 2k_s - 2$ by definition. Thus
$\Prb \{ W_{k_s} = 2k_s - 2 \} > 0$ and hence $m \le k_s$.

On the other hand, let $o,v',v'', \TT',\TT''$ 
be as in the statement of \Cref{lem:split subtrees}.  It follows from that result that
$m = \# \LL(\TT') \wedge \# \LL(\TT'')$.  By the construction in \Cref{rem:down-split construction}
if $m = \# \LL(\TT')$ or the analogous one with the roles of $\TT'$ and $\TT''$ reversed if $m = \# \LL(\TT'')$,
we may construct a down-split sequence for $(\TT,v)$ that has splitting index $m$.  By the definition of
the total order $\prec$, the splitting index for the minimal down-split sequence for $(\TT,v)$ is at most $m$.
\end{proof}

\begin{prop}{Let $s$ be the minimal down-split sequence for a marked $3$-valent tree $(\TT,v)$. 
There is no other marked $3$-valent tree for which $s$ is the minimal down-split sequence.}
\label{prop:det by downsplit}
\end{prop}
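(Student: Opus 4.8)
The plan is to reconstruct $(\TT,v)$ from $s$ by induction on $n = \#\LL(\TT)$, mirroring the recursive structure of the definition of down-split sequences. The base case $n=2$ is trivial: the only marked $3$-valent tree is a single edge and $s=(1)$. For the inductive step, suppose $s = (s_2,\ldots,s_n)$ is the minimal down-split sequence for $(\TT,v)$, with splitting index $k_s$. By \Cref{cor:split subtrees}, $k_s = m = \inf\{k : \Prb\{W_k = 2k-2\} > 0\}$, so $k_s$ is an invariant of the joint distribution of $\cW_{(\TT,v)}$, hence of $s$ itself (it is literally read off as the position of the first entry equal to $2k-2$). Now \Cref{lem:split subtrees} tells us that $W_{k_s} = 2k_s - 2$ forces the leaves $Y_2,\ldots,Y_{k_s}$ to all lie in one of $\TT',\TT''$ and $Y_{k_s+1},\ldots,Y_n$ in the other, and moreover (from \Cref{cor:split subtrees}) $k_s = \#\LL(\TT') \wedge \#\LL(\TT'')$. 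So the structure of $\TT$ at the marked vertex $o$ is: $v$ adjoined to $o$, and $o$ splits into two subtrees $\TT'$ and $\TT''$, rooted at $o$, with $\#\LL(\TT') = k_s$ (say, WLOG the smaller one) and $\#\LL(\TT'') = n - k_s + 1$.

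Next I would identify $s'$ and $s''$ as the minimal down-split sequences of the two marked subtrees. Following \Cref{rem:down-split construction}, set $s' = (s_2-1,\ldots,s_{k_s}-1)$ and $s'' = (s_{k_s+1}-(2k_s-2),\ldots,s_n-(2k_s-2))$; these are down-split by definition. The marked subtrees to consider are $(\TT',o)$ — a rooted binary tree on $k_s$ leaves, which we may equivalently regard as a $3$-valent tree marked at the extra leaf $o$ — and $(\TT'',o)$ likewise. The key claim is that $s'$ is the minimal down-split sequence for $(\TT',o)$ and $s''$ is the minimal down-split sequence for $(\TT'',o)$. This is where the total order $\prec$ on down-split sequences does its work: because $\prec$ compares first on splitting index, then on the left piece $s'$, then on the right piece $s''$, any down-split sequence $r$ in the support of $\cW_{(\TT,v)}$ with $r \prec s$ would have to have $k_r \le k_s = m$, hence $k_r = m$ by \Cref{cor:split subtrees}, and then either $r' \prec s'$ (with $r''$ arbitrary) or $r' = s'$ and $r'' \prec s''$. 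A realization of the leaf-order that produces a down-split sequence with splitting index $m$ exists by the construction in \Cref{rem:down-split construction}, and the induced sub-realizations on $\TT'$ and $\TT''$ range over exactly the realizations producing down-split sequences for $(\TT',o)$ and $(\TT'',o)$ respectively (with independent positivity of the joint probabilities, since the leaf sampling within each block is uniform). Minimality of $s$ among all down-split sequences in the support of $\cW_{(\TT,v)}$ then forces $s'$ to be $\prec$-minimal among down-split sequences in the support of $\cW_{(\TT',o)}$ and, given that, $s''$ to be $\prec$-minimal among those in the support of $\cW_{(\TT'',o)}$.

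Granting the claim, the induction closes: by the inductive hypothesis applied to $(\TT',o)$ (which has $k_s < n$ leaves) and to $(\TT'',o)$ (which has $n - k_s + 1 < n$ leaves, since $k_s \ge 2$), the sequences $s'$ and $s''$ determine $(\TT',o)$ and $(\TT'',o)$ up to isomorphism. One then reassembles $\TT$ by gluing $\TT'$ and $\TT''$ at their marked leaves $o$ (identifying the two copies of $o$ into a single degree-$3$ vertex) and attaching $v$ to $o$; since $\#\LL(\TT') \le \#\LL(\TT'')$ is forced, there is no ambiguity in which subtree plays which role, and the resulting marked $3$-valent tree is determined. I expect the main obstacle to be making the claim in the second paragraph fully rigorous — specifically, verifying that minimality of the full sequence under $\prec$ descends correctly to each component. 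One must argue carefully that (i) every realization of the leaf-sampling of $\TT$ that yields a down-split sequence with splitting index $m$ splits the leaves into the $\TT'$-block and $\TT''$-block as in \Cref{lem:split subtrees}, and conversely that every pair of realizations on $\TT'$ and $\TT''$ (with positive probability, producing down-split sequences) can be interleaved into such a realization on $\TT$; and (ii) that the lexicographic-style priority built into $\prec$ (splitting index, then $s'$, then $s''$) exactly matches the order in which these independent choices are made, so that minimizing the whole is equivalent to minimizing $k_s$, then $s'$, then $s''$. The rest is bookkeeping with the recursive definitions.
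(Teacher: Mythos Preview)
Your proposal is correct and follows essentially the same inductive strategy as the paper: use \Cref{cor:split subtrees} to pin down the splitting index, invoke \Cref{lem:split subtrees} to separate the leaves into the two subtrees $\TT'$ and $\TT''$, show that $s'$ and $s''$ are the minimal down-split sequences for $(\TT',o)$ and $(\TT'',o)$, and then apply the inductive hypothesis. The paper handles your ``main obstacle'' slightly more directly: rather than analyzing the structure of all possible $r \prec s$, it simply argues by contradiction that if some $\tilde s' \prec s'$ lay in the support of $\cW_{(\TT',o)}$, then concatenating $\tilde s'$ with the original $s''$ (suitably shifted) would produce a down-split sequence in the support of $\cW_{(\TT,v)}$ strictly $\prec s$ --- and similarly for $s''$. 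One small caution: your remark that ``$\#\LL(\TT') \le \#\LL(\TT'')$ is forced, so there is no ambiguity'' doesn't quite cover the case $\#\LL(\TT') = \#\LL(\TT'')$; the paper simply absorbs this into a ``without loss of generality'' choice of labeling, which is harmless since the glued tree is the same either way.
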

\begin{proof}
We will prove this by induction. The claim is clearly true for the down-split sequence $s = (1)$.

Let $(\TT, v)$ be a marked $3$-valent tree and $s$ the minimal down-split sequence for $(\TT,v)$. Define $o,v',v'', \TT',\TT''$ 
as in the statement of \Cref{lem:split subtrees}.  Let $k_s$ be the splitting index of $s$.  Let $y_1, \ldots, y_n$
be an ordered listing of $\LL(\TT)$ such that $\WW_{(\TT,v)}(\{y_1, \ldots, y_k\}) = s_k$ for $2 \le k \le n$.
By \Cref{cor:split subtrees} and \Cref{lem:split subtrees} we must either have
$\{y_2, \ldots, y_{k_s}\} = \LL(\TT') \setminus \{o\}$ and $\{y_{k_s + 1}, \ldots, y_{n}\} = \LL(\TT'') \setminus \{o\}$
or the analogous conclusion with the roles of $\TT'$ and $\TT''$ interchanged holds (if $\# \LL(\TT') \ne \LL(\TT'')$, then only
one alternative is possible).  We may suppose without loss of generality that the choice of $v'$ and $v''$ is such that
the first alternative holds.

Set
$$
s' := (s_2 - 1, \ldots, s_{k_s} - 1), \quad s'' := (s_{k_s + 1} - (2k_s - 2), \ldots, s_{n} - (2k_s - 2) ).
$$ 
By definition, $s'$ and $s''$ are down-split sequences.
Because
$\Prb \{ \cW_{(\TT, v)} = s \} > 0$,
we have
$$
\Prb\{\cW_{(\TT', o)} = s' \} > 0
$$
and
$$
\Prb\{\cW_{(\TT'', o)} = s''\} > 0.
$$

We claim that $s'$ must be the minimal down-split sequence for $(\TT', o)$. 
To see this, note that if there was a down-split sequence $\tilde{s}'$ with $\tilde{s}' \prec s'$ such that
$$
\Prb\{ \cW_{(\TT', o)} = \tilde{s}'\} > 0,
$$
then, writing 
$$
\bar{m} := (m, \ldots, m)
$$
for a positive integer $m$ we would have
$$
\Prb\{\cW_{(\TT,v)} = (\tilde{s}' + \overline{1}, s'' + \overline{2k_s - 2})\} > 0
$$
and, by definition of the total order $\prec$,
$$
(\tilde{s}'+ \overline{1}, s'' + \overline{2k_s - 2}) \prec (s' + \overline{1}, s''+ \overline{2k_s - 2}) = s.
$$
This, however, contradicts the minimality of $s$. Similarly, $s''$ is the minimal down-split sequence for $(\TT'', o)$.
By induction, $(\TT', o)$ and $(\TT'', o)$ are uniquely determined.

Since $(\TT,v)$ is obtained by gluing $(\TT',o)$ and $(\TT'', o)$ together at the shared vertex $o$ 
and attaching the marked leaf $v$ to $o$ by an edge,  we see that $(\TT, v)$ is also determined by $s$.
\end{proof}

While the proof of \Cref{prop:det by downsplit} is not in the form of an explicit reconstruction procedure,
the argument  clearly leads to an algorithm for building a marked $3$-valent tree $(\TT, v)$ 
from the corresponding minimal down-split sequence. Namely, $(\TT,v)$ is simply the recursion tree that
results from parsing $s$ as a down-split sequence 
as in \Cref{def:downsplit}, with leaves corresponding to edges that terminate in the sequence $(1)$. 

\begin{figure}[ht]
    \centering
    \includegraphics[width=0.4\textwidth]{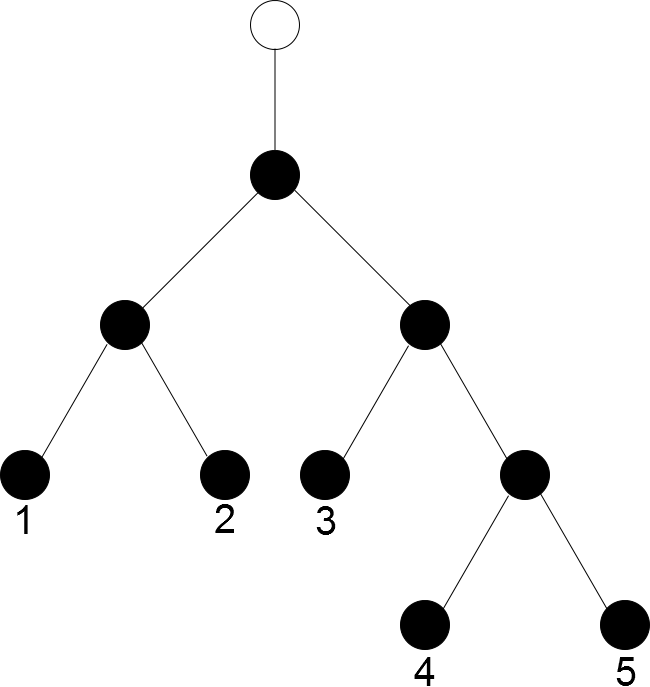}
    \hspace{5 pc}
    \includegraphics[width=0.4\textwidth]{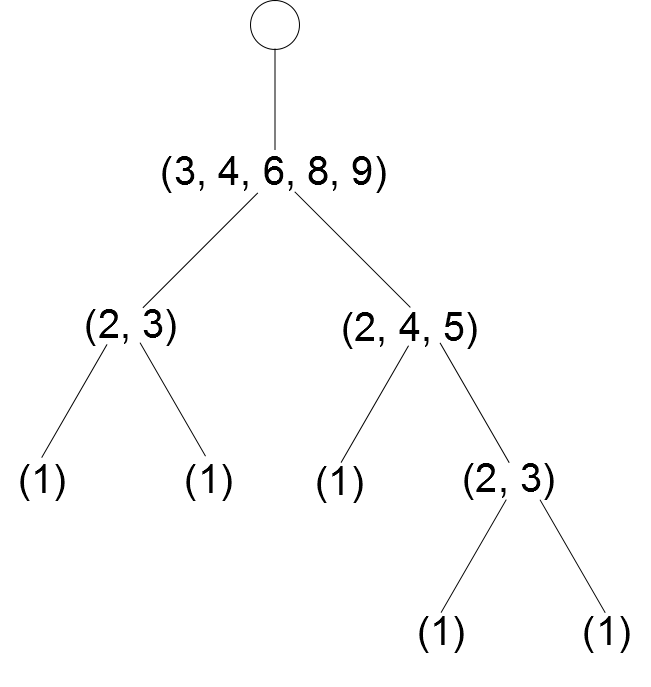}
    \caption{A marked $3$-valent tree with its leaves ordered minimally and the corresponding parse tree for the minimal 
		down-split sequence.}
    \label{fig:ParseTree}
\end{figure}

\subsubsection{Proof of \Cref{thrm:main kary1} and \Cref{thrm:main kary2}}

\label{sec:first half of main proof}

From \Cref{prop:det by downsplit} we are able to easily prove \Cref{thrm:main kary1} for (unmarked) $3$-valent trees.

\begin{proof}
Let $\TT$ be a fixed (unknown) $3$-valent tree with $n$ leaves and let $\cW_{\TT}$ be its random length sequence. 
Conditional on $Y_1$, $\cW_{\TT}$ is the modified random length sequence of the marked binary tree $(\TT, Y_1)$. Thus, if 
$$
\Prb\{\cW_{\TT} = s\} > 0,
$$
then there must be some leaf $v \in \TT$ such that
$$
\Prb\{ \cW_{(\TT, v)} = s \} > 0.
$$
Let $s^*$ be the minimal element of the set
$$
\{ s \text{ down-split } \colon \Prb \{\cW_{\TT} = s\} > 0 \}.
$$
Then $s^*$ must be the minimal down-split sequence for $(\TT, v)$ for at least one leaf $v$ of $\TT$. 
By \Cref{prop:det by downsplit} we can reconstruct $(\TT, v)$ and hence $\TT$ from $s^*$.
\end{proof}

The above argument can be pushed further to prove \Cref{thrm:main kary2} for $\cT$ a random $3$-valent tree.

\begin{proof} 
Let $\cT$ be a random $3$-valent tree with $n$ leaves and 
random length sequence $\cW_{\cT}$.

Given a $3$-valent tree $\TT$ with $n$ leaves, let $s^\TT$ be the minimal element of the set of down-split
sequences of the marked $3$-valent trees $(\TT,v)$ as $v$ ranges over $\LL(\TT)$.
 We equip the set of $3$-valent tree with $n$ leaves with a total order that, with a slight abuse of notation,
we denote by $\prec$ by declaring that $\TT' \prec \TT''$ if $s^{\TT'} \prec s^{\TT''}$. Note
that if $\TT' \prec \TT''$, then $\bP\{\cW_{\TT'} = s^{\TT'}\} > 0$ and $\bP\{\cW_{\TT''} = s^{\TT'}\} = 0$.
Now, for each choice of $\TT$ we have
\[
\bP\{\cW_{\cT} = s^{\TT}\} 
= 
\sum_{\TT'} \bP\{\cT = \TT'\} \bP\{\cW_{\TT'} = s^{\TT}\}
=
\sum_{\TT' \preceq \TT} \bP\{\cT = \TT'\} \bP\{\cW_{\TT'} = s^{\TT}\},
\]
and the conclusion that we can recover $\bP\{\cT = \TT\}$ as $\TT$ ranges over the $3$-valent trees with
$n$ leaves follows simply from the observation that if $b$ is a row vector of length $N$ and
$A$ is an $N \times N$ matrix that has all entries below the diagonal zero and all entries on the
diagonal strictly positive, then there is a unique row vector $x$ of length $N$ such that $b = x A$.
\end{proof}

\subsection{Up-split sequences}

We now prove \Cref{thrm:main kary1} and \Cref{thrm:main kary2} for rooted binary trees. 
Analogous to the objects we introduced for marked $3$-valent trees, we begin with a definition of a class of sequences
that will appear in the support of the random length sequence of a rooted binary tree.

\begin{defn}
An {\em up-split sequence} is an element of the class of increasing sequences of nonnegative
integers defined recursively as follows.

The sequence 
\[
s = (0)
\] 
is an up-split sequence.

a sequence $s = (s_1, \ldots, s_n)$, $n>1$, is an up-split sequence if 
\[
\{1 \leq k < n \colon s_k = 2k - 2 \} \ne \emptyset
\]
and, setting
$$
k_s := \sup \{1 \leq k < n \colon s_k = 2k - 2 \},
$$
\begin{itemize}
\item $(s_1, \ldots, s_{k_s})$ is an up-split sequence,
\item $(s_{k_s + 1} - (2k_s - 1), \ldots, s_n - (2k_s - 1))$ is a down-split sequence.
\end{itemize}
The index $k_s$ is the {\em splitting index} of $s$.
\end{defn}

\begin{ex}
{Suppose that $\TT$ is a rooted binary tree with root $o$.  In a manner similar to the construction in
\Cref{rem:down-split construction} we can order
a partial order on $\VV(\TT)$ by declaring
that $x$ precedes $y$ if $x \ne y$ is on the path between $\rho$ and $y$, and we can extend this
partial order to a total order $<$ such that if $w,x,y,z$ are such that
$w$ and $x$ are not comparable in the partial order  but $w < x$, $w$ precedes $y$
in the partial order,
and $x$ precedes $z$ in the partial order, 
then $y < z$.  
Suppose that $y_1 < y_2 < \ldots < y_n$ is the ordered
listing of $\LL(\TT)$.  Set $s_1:=0$ and
$s_k := \WW_\TT(\{y_1, \ldots, y_k\})$, $2 \le k \le n$.
Then $(s_1, \ldots, s_n)$ is an up-split sequences.
The leaves $y_1, \ldots, y_{k_s}$ and $y_{k_s+1}, \ldots, y_n$ respectively span the two binary subtrees $\TT'$
and $\TT''$ that are rooted at the two
children of the root $o$.  The subtree spanned by $o$ and $y_{k_s+1}, \ldots, y_n$ is a $3$-valent tree.
\begin{figure}[ht]
    \centering
    \includegraphics[width=0.6\textwidth]{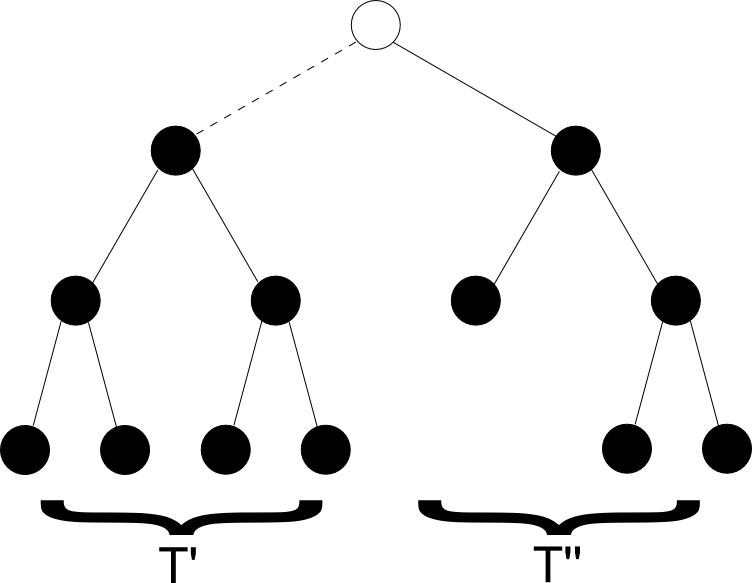}
    \caption{A rooted binary tree split as a rooted binary subtree $\TT'$ and a marked $3$-valent tree $(\TT'', o)$.}
    \label{fig:UpsplitEx}
\end{figure}
}
\label{ex:up-split construction}
\end{ex}

The following analogue of  \Cref{lem:exist of split} is clear from \Cref{ex:up-split construction}.

\begin{lemma}{For every rooted binary tree $\TT$, there is at least one up-split sequence $s$ with
$$
\Prb\{ (0,\cW_{\TT}) = s\} > 0.
$$}
\label{lem:exist upsplit}
\end{lemma}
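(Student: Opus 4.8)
The plan is to obtain the lemma as an immediate consequence of the construction in \Cref{ex:up-split construction}. Let $\TT$ be a rooted binary tree with $n$ leaves and root $o$, fix a planar embedding compatible with the ancestor partial order on $\VV(\TT)$, and let $y_1 < y_2 < \cdots < y_n$ be the resulting left-to-right listing of $\LL(\TT)$. Put $s_1 := 0$ and $s_k := \WW_\TT(\{y_1,\ldots,y_k\})$ for $2 \le k \le n$; by \Cref{ex:up-split construction} the sequence $s := (s_1,\ldots,s_n)$ is an up-split sequence. Since the uniform random listing $(Y_1,\ldots,Y_n)$ of $\LL(\TT)$ takes the particular value $(y_1,\ldots,y_n)$ with probability $1/n! > 0$, and since on that event $(0,\cW_\TT) = (0,W_2,\ldots,W_n) = (s_1,\ldots,s_n) = s$, we get $\Prb\{(0,\cW_\TT) = s\} \ge 1/n! > 0$, which is the claim.

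The only real content is the assertion of \Cref{ex:up-split construction} that $s$ is up-split, which I would prove by induction on $\#\LL(\TT)$, splitting at the root $o$. Choose the planar embedding so that the subtree below one child of $o$ lies entirely to the left; let $\TT'$ be the rooted binary subtree spanned by the $n'$ leaves $\{y_1,\ldots,y_{n'}\}$ of that side, and let $\TT''$ be the $3$-valent subtree spanned by $o$ together with $\{y_{n'+1},\ldots,y_n\}$, so that $(\TT'',o)$ is a marked $3$-valent tree. By \Cref{lem:k subtree size}, for $1 \le k < n$ we have $s_k = 2k-2$ exactly when the subtree spanned by $\{y_1,\ldots,y_k\}$ is a rooted binary tree; this holds at $k = n'$, where the span is precisely $\TT'$, and fails for $n' < k < n$, where the span contains $o$ and hence has at least two vertices of degree $2$; therefore $k_s = n'$. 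The prefix $(s_1,\ldots,s_{n'})$ coincides with the planar-order length sequence of $\TT'$ (spans of subsets of $\LL(\TT')$ are taken inside $\TT'$), and so is up-split by the inductive hypothesis, with base case $\TT'$ a single leaf giving $(0)$. For the suffix, decomposing the subtree spanning $\{y_1,\ldots,y_{n'+j}\}$ along the edge that joins the root of $\TT'$ to $o$ yields $s_{n'+j} = (2n'-2) + 1 + t_{j+1}$ for $1 \le j \le n-n'$, where $(t_2,\ldots,t_{n-n'+1})$ is the length sequence of $(\TT'',o)$ in the sense of \Cref{rem:down-split construction} with leaves listed as $o,y_{n'+1},\ldots,y_n$; by \Cref{rem:down-split construction} this is a down-split sequence. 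Since $(2n'-2)+1 = 2k_s - 1$, the shifted suffix $(s_{k_s+1} - (2k_s-1),\ldots,s_n - (2k_s-1))$ equals $(t_2,\ldots,t_{n-n'+1})$ and is down-split, while $s_{n'} = 2n'-2$ with $n' < n$ gives the nonemptiness condition; this is exactly the recursive definition of an up-split sequence.

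I do not expect a genuine obstacle here: once \Cref{lem:k subtree size}, \Cref{rem:down-split construction}, and \Cref{ex:up-split construction} are available the argument is bookkeeping. The one point that needs care is the index shift in the suffix: the quantity $2k_s - 1$ must be read as the $2k_s - 2$ edges of $\TT'$ together with the single edge joining the root of $\TT'$ to $o$ -- the very edge by which $o$ is adjoined in the marked tree $(\TT'',o)$ -- and getting this alignment right (rather than committing an off-by-one error) is the main thing to watch. Beyond that one should handle the degenerate cases (a child of $o$ being a leaf, so that $\TT' = (0)$; and $(\TT'',o)$ bottoming out at a single edge, so that the down-split suffix is $(1)$) and confirm that $k_s = n' < n$, so that both recursive pieces are well defined.
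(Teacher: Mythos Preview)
Your proposal is correct and follows exactly the route the paper intends: the paper gives no proof at all beyond the sentence ``clear from \Cref{ex:up-split construction}'', and what you have written is precisely the detailed verification of that example's claim, carried out by induction on the number of leaves and appealing to \Cref{lem:k subtree size} and \Cref{rem:down-split construction} for the two pieces of the split. Your identification of the splitting index as $k_s = n'$ via the ``two degree-$2$ vertices'' argument and your bookkeeping on the $2k_s - 1$ shift are both sound; the only cosmetic point is that in your phrasing the edge joining the root of $\TT'$ to $o$ is not literally ``the edge by which $o$ is adjoined in $(\TT'',o)$'' under your own convention (you defined $\TT''$ to already contain $o$ as a leaf), but the formula $s_{n'+j} = (2n'-2)+1+t_{j+1}$ is unaffected.
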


The following analogue of \Cref{lem:size of downsplit} can be established using a similar inductive proof.

\begin{lemma}{If $s = (s_1, \ldots, s_n)$ is an up-split sequence then $s_n = 2n - 2$.}
\label{lem:size of upsplit}
\end{lemma}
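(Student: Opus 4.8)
The plan is to argue by induction on the length $n$, mirroring the proof of \Cref{lem:size of downsplit}. The base case is immediate: the only up-split sequence of length $n = 1$ is $s = (0)$, and $s_1 = 0 = 2 \cdot 1 - 2$.

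For the inductive step, I would take $s = (s_1, \ldots, s_n)$ with $n > 1$ an up-split sequence with splitting index $k_s$, so that $1 \le k_s \le n - 1$, the prefix $(s_1, \ldots, s_{k_s})$ is an up-split sequence, and the shifted tail
\[
t := (s_{k_s + 1} - (2k_s - 1), \ldots, s_n - (2k_s - 1))
\]
is a down-split sequence. The sequence $t$ has $n - k_s$ entries, so, regarded as a down-split sequence indexed from $2$, it has leaf parameter $m := n - k_s + 1$. By \Cref{lem:size of downsplit} its final entry equals $2m - 3 = 2(n - k_s + 1) - 3 = 2n - 2k_s - 1$. Since that final entry is also $s_n - (2k_s - 1)$, one obtains $s_n = (2n - 2k_s - 1) + (2k_s - 1) = 2n - 2$, which closes the induction.

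As with \Cref{lem:size of downsplit}, the inductive hypothesis for up-split sequences is not really needed: the conclusion already drops out of \Cref{lem:size of downsplit} applied to the down-split block $t$. The one point that wants a little care — and essentially the only obstacle — is the bookkeeping between the two indexing conventions (up-split sequences indexed from $1$ versus down-split sequences indexed from $2$), together with the degenerate case $k_s = n - 1$, where $t$ is the single-entry down-split sequence $(1)$ and one checks directly that $s_n - (2k_s - 1) = 1$, i.e. $s_n = 2k_s = 2n - 2$.
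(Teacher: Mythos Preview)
Your proof is correct and matches what the paper intends: the paper does not spell out a proof but simply says the lemma ``can be established using a similar inductive proof'' to that of \Cref{lem:size of downsplit}, and your argument---applying \Cref{lem:size of downsplit} to the down-split tail $t$ and solving for $s_n$---is precisely that analogue. Your observation that the inductive hypothesis on the up-split prefix is never actually invoked is accurate and a nice sharpening; the paper's proof of \Cref{lem:size of downsplit} likewise only uses the inductive hypothesis on the second block, so the parallel is exact.
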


\begin{defn}
Define a total order $\ll$ on the set of up-split sequences of a given length
recursively as follows.  Firstly, $(0) \ll (0)$ does not hold.
Next, let $s$ and $r$ be two up-split sequences indexed by $\{1,\ldots,n\}$
with respective splitting indices 
$k_s$ and $k_r$. Set
$$
s' = \left(s_1, \ldots, s_k \right), 
\quad 
r' = \left( r_1, \ldots, r_k \right)
$$
and
$$
s'' = \left(s_{k + 1} - (2k - 1), \ldots, s_n - (2k - 1) \right), 
\quad
r'' = \left(r_{k + 1} - (2k - 1), \ldots, r_n - (2k - 1) \right).
$$
Declare that
$$
s \ll r
$$
if 
\[
k_s > k_r
\] 
or 
\[
k_s = k_r  \quad \text{and} \quad s' \ll r'
\]
or
\[
k_s = k_r  \quad \text{and} \quad s' = r' \quad
\text{and} \quad s'' \prec r''.
\]
\end{defn}

\begin{rem}
Note for up-split sequences $s$ and $r$, 
that $s \ll r$ implies that the splitting index of $s$ 
is {\em greater than or equal} to the splitting index of $r$. 
For down-split sequences $u$ and $t$,  $u \prec t$ implies that the splitting index of $u$ 
is {\em less than or equal} to the splitting index of $t$.
This change in the direction of the inequalities matches 
the switch in the definition of the splitting index
from an infimum for down-split sequences to a supremum for up-split sequences.
\end{rem}

The next result follows easily by induction.

\begin{lemma}{The binary relation $\ll$ is a total order on the set of 
up-split sequences of a given length.}
\end{lemma}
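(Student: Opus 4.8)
The plan is to prove the lemma by induction on the common length $n$ of the up-split sequences, verifying the three defining properties of a (strict) linear order: irreflexivity, transitivity, and the trichotomy that any two distinct up-split sequences of length $n$ are $\ll$-comparable. The recursion for $\ll$ splits an up-split sequence $s$ of length $n$ with splitting index $k_s$ into an up-split piece $s'$ of length $k_s$ and a down-split piece $s''$ of length $n-k_s$; since $1\le k_s<n$, both pieces are strictly shorter, so the inductive hypothesis applies to $s'$, and the already-established fact that $\prec$ is a total order on the down-split sequences of each fixed length applies to $s''$. That earlier fact is what we invoke to settle the ``$s''$ versus $r''$'' comparisons, exactly as the (omitted) proof for $\prec$ would invoke itself recursively on its down-split pieces.

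For the base case $n=1$ the only up-split sequence is $(0)$, and since $(0)\ll(0)$ is declared not to hold, irreflexivity is immediate, transitivity is vacuous, and there is no pair of distinct sequences to compare. For the inductive step, fix $n>1$, assume the lemma for all lengths $<n$, and let $s,r$ be up-split of length $n$ with splitting indices $k_s,k_r$ and associated pieces $s',s'',r',r''$. Irreflexivity of $\ll$ at $s$ holds because each of the three clauses defining $s\ll s$ fails: the first since $k_s\not>k_s$, the second by the inductive irreflexivity of $\ll$ on $s'$, and the third by the irreflexivity of $\prec$ on $s''$. For trichotomy, assume $s\ne r$. If $k_s\ne k_r$, exactly one of $k_s>k_r$ and $k_r>k_s$ holds, and the first clause then yields exactly one of $s\ll r$, $r\ll s$. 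If $k_s=k_r=:k$, then $s'$ and $r'$ are up-split of the common length $k<n$, and by the inductive trichotomy either $s'\ll r'$, or $r'\ll s'$, or $s'=r'$; in the first two cases the second clause determines the comparison (the reverse comparison being excluded by the inductive antisymmetry on length $k$), while if $s'=r'$ then $s\ne r$ forces $s''\ne r''$, and since $s'',r''$ are down-split of the common length $n-k$, the total order $\prec$ gives exactly one of $s''\prec r''$, $r''\prec s''$, hence via the third clause exactly one of $s\ll r$, $r\ll s$.

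For transitivity, suppose $s\ll r$ and $r\ll t$, all of length $n$. By the remark recorded just before the lemma, $s\ll r$ forces $k_s\ge k_r$ and $r\ll t$ forces $k_r\ge k_t$, so $k_s\ge k_t$; if $k_s>k_t$ then $s\ll t$ by the first clause. Otherwise $k_s=k_r=k_t=:k$, and from $s\ll r$ we have either $s'\ll r'$ or ($s'=r'$ and $s''\prec r''$), and likewise for $r\ll t$ with $(r',r'')$ and $(t',t'')$. In any combination in which at least one of the two steps uses its ``$\ll$'' alternative, one obtains $s'\ll t'$ (using the inductive transitivity of $\ll$ on length $k$, together with $s'=r'$ or $r'=t'$ when appropriate), whence $s\ll t$ by the second clause; and if both steps use the equality alternative, then $s'=r'=t'$ and the transitivity of $\prec$ on down-split sequences of length $n-k$ gives $s''\prec t''$, whence $s\ll t$ by the third clause. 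This completes the induction.

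The argument is essentially bookkeeping over the three clauses, and I do not expect a genuine obstacle. The one point requiring care is keeping the induction well-founded: the recursive pieces $s',r',t'$ have length $k<n$ and the pieces $s'',r'',t''$ have length $n-k<n$ (both guaranteed by $1\le k_s<n$), so that the inductive hypothesis for $\ll$ and the previously proved total-order property of $\prec$ are each invoked at a legitimately smaller length. Beyond that, the proof is the exact mirror of the proof that $\prec$ linearly orders the down-split sequences, with the roles of the ``recurse on the first piece'' and ``recurse on the second piece'' steps interchanged and the direction of the splitting-index comparison reversed.
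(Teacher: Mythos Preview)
Your proof is correct and follows exactly the approach the paper indicates: the paper's proof consists of the single sentence ``The next result follows easily by induction,'' and you have simply written out that induction in full, checking irreflexivity, trichotomy, and transitivity via the recursive decomposition into an up-split piece of length $k_s<n$ and a down-split piece of length $n-k_s<n$, and invoking the previously established total order $\prec$ on down-split sequences for the latter.
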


\begin{defn}
The {\em minimal up-split sequence} for a rooted binary tree $\TT$ is the minimal element 
(with respect to the total order $\ll$) of the set
$$
\{ s \text{ up-split }\colon \Prb \{ \cW_{\TT} = s \} > 0 \}.
$$
\end{defn}

The up-split sequence analogues of \Cref{lem:split subtrees} and \Cref{cor:split subtrees}
are the following and they are proved in essentially the same manner.

\begin{lemma}{Given  a binary tree $\TT$ with root $o$, let $\TT'$ and $\TT''$ be the binary subtrees
rooted at the two children of $o$.
Set
$$
m := \sup \{ 1 \leq k < n \colon \Prb \{ W_k = 2k - 2 \} > 0 \}.
$$
Then $W_m = 2m - 2$ if and only if $Y_1, \ldots, Y_m \in \TT'$ 
and $Y_{m + 1}, \ldots, Y_n \in \TT''$ or {\em vice versa}.}
\label{lem:upsplit subtrees}
\end{lemma}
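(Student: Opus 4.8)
The plan is to follow the proof of \Cref{lem:split subtrees}, modified to account for the fact that here the relevant extremum is a supremum rather than an infimum. Throughout, write $n' := \#\LL(\TT')$ and $n'' := \#\LL(\TT'')$, so that $n = n' + n''$ with $n', n'' \ge 1$, and recall that, because all edge lengths equal $1$, for any $K \subseteq \LL(\TT)$ we have $\WW_\TT(K) = \#\EE(\SS_K)$, where $\SS_K$ is the subtree of $\TT$ spanned by $K$. By \Cref{lem:k subtree size} applied with $k = 2$, the equality $\WW_\TT(K) = 2\#K - 2$ holds exactly when $\SS_K$ is a rooted binary tree, that is, when $\SS_K$ has a unique vertex of degree $2$ (necessarily its root).

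The core of the proof is the following structural claim: \emph{if $\SS$ is a subtree of $\TT$ that is a rooted binary tree and $\LL(\SS)$ contains a leaf of $\TT'$ as well as a leaf of $\TT''$, then $\SS = \TT$.} I would argue this directly. First, $o \in \VV(\SS)$, since $o$ lies on the path joining any leaf of $\SS$ in $\TT'$ to any leaf of $\SS$ in $\TT''$; the only $\TT$-neighbours of $o$ are its children $o'$ and $o''$, and both lie in $\SS$ for the same reason, so $o$ has degree exactly $2$ in $\SS$. As $\SS$ has at least two leaves, $o$ is therefore \emph{the} unique degree-$2$ vertex of $\SS$. Now suppose some leaf $\ell$ of $\TT$ is not a leaf of $\SS$; since $\ell$ has degree $1$ in $\TT$, this means $\ell \notin \VV(\SS)$. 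Walking from $\ell$ up toward $o$, let $q$ be the last vertex on this path not lying in $\VV(\SS)$; then $q \ne o$, so the parent $q^+$ of $q$ lies in $\VV(\SS)$, and $q^+$ --- an interior vertex of $\TT$ on a path between two leaves of $\SS$ but ``missing'' the direction toward $q$ --- must, on a brief case check according to whether $q^+ = o$ or not, either be a degree-$2$ vertex of $\SS$ other than $o$, or force $o$ to have degree at most $1$ in $\SS$; both alternatives contradict what was just shown. Hence no such $\ell$ exists, $\LL(\SS) = \LL(\TT)$, and $\SS = \TT$. (One could equally prove the claim by induction on the height of $\TT$, along the lines of the degree bookkeeping in \Cref{lem:split subtrees}.)

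Given the claim, the remainder is routine. If $k < n$ and $\WW_\TT(\{Y_1, \dots, Y_k\}) = 2k - 2$, then $\SS_{\{Y_1, \dots, Y_k\}}$ is a rooted binary tree, so by the claim $\{Y_1, \dots, Y_k\}$ must be contained in $\LL(\TT')$ or in $\LL(\TT'')$. Therefore $\Prb\{W_k = 2k - 2\} = 0$ for $\max(n', n'') < k < n$, whereas choosing $\{Y_1, \dots, Y_{\max(n', n'')}\}$ to be all of whichever of $\LL(\TT'), \LL(\TT'')$ is the larger spans that (rooted binary) subtree and realises $W_{\max(n', n'')} = 2\max(n', n'') - 2$; hence $m = \max(n', n'')$. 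For the stated equivalence, suppose without loss of generality that $n' \ge n''$, so $m = n'$. If $W_m = 2m - 2$, then $\{Y_1, \dots, Y_{n'}\} \subseteq \LL(\TT')$ or $\subseteq \LL(\TT'')$; having $n' = m \ge n''$ elements, it equals all of $\LL(\TT')$ --- whence $\{Y_{m+1}, \dots, Y_n\} = \LL(\TT'')$ --- or, possible only if $n' = n''$, all of $\LL(\TT'')$, which is the ``vice versa'' alternative. Conversely, if $\{Y_1, \dots, Y_m\}$ is $\LL(\TT')$ (respectively $\LL(\TT'')$) then $\SS_{\{Y_1, \dots, Y_m\}}$ is $\TT'$ (respectively $\TT''$), a rooted binary tree, so $W_m = 2m - 2$ by \Cref{lem:k subtree size}.

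The step I expect to be the main obstacle is the structural claim: making precise the degree bookkeeping that shows ``$\SS$ rooted binary and meeting both $\TT'$ and $\TT''$'' forces $\SS = \TT$. The rest mirrors \Cref{lem:split subtrees}, the one genuine difference being that the roles of ``too few'' and ``too many'' selected leaves are swapped because the relevant extremum has changed from an infimum to a supremum.
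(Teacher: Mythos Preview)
Your proposal is correct and follows essentially the same degree-counting approach that the paper uses for \Cref{lem:split subtrees} (the paper gives no separate proof of \Cref{lem:upsplit subtrees}, merely noting it is ``proved in essentially the same manner''). Your packaging of the argument into a single structural claim --- that a rooted-binary spanned subtree meeting both $\TT'$ and $\TT''$ must be all of $\TT$ --- is a tidy reorganisation of the case analysis in the proof of \Cref{lem:split subtrees}; one small simplification is that the case $q^+ = o$ in your argument cannot actually occur, since you have already shown $o', o'' \in \VV(\SS)$ and $q$ would have to be one of them.
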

%
%

\begin{cor}{Let $\TT$ be a rooted binary tree
with random length sequence $\cW_{\TT} = (W_2, \ldots, W_n)$.
Then
\[
m:= \sup\{ 1 \le k  < n \colon \Prb \{ W_k = 2k - 2 \} > 0 \}
\]
is the splitting index for the minimal up-split sequence for $\TT$.}
\label{cor:upsplit subtrees}
\end{cor}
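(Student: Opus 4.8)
The plan is to follow the proof of \Cref{cor:split subtrees} essentially verbatim, using \Cref{lem:upsplit subtrees} in place of \Cref{lem:split subtrees} and \Cref{ex:up-split construction} in place of \Cref{rem:down-split construction}, while taking care that everything is now oriented ``the other way'': $m$ is a supremum rather than an infimum, and the total order $\ll$ on up-split sequences declares $s \ll r$ when $k_s > k_r$. Throughout, let $o$ be the root of $\TT$, let $\TT'$ and $\TT''$ be the rooted binary subtrees hanging from its two children, and set $p := \#\LL(\TT') \vee \#\LL(\TT'')$.

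The first step is to identify $m$. The set $\{1 \le k < n : \Prb\{W_k = 2k-2\} > 0\}$ is finite and nonempty (it contains $k=1$), so its supremum $m$ is attained, i.e.\ $\Prb\{W_m = 2m-2\} > 0$. By \Cref{lem:upsplit subtrees}, on that positive-probability event the leaves $Y_1,\ldots,Y_m$ all lie in one of $\TT',\TT''$ while $Y_{m+1},\ldots,Y_n$ lie in the other; since $\LL(\TT)$ is the disjoint union $\LL(\TT') \sqcup \LL(\TT'')$, this forces $\{Y_1,\ldots,Y_m\}$ to be the whole leaf set of that subtree, so $m \in \{\#\LL(\TT'),\#\LL(\TT'')\}$ and in particular $m \le p$. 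For the reverse inequality, consider a realization (of positive probability) in which the $p$ leaves of the larger of the two subtrees are listed first; those $p$ leaves span that subtree, which is a rooted binary tree, so by \Cref{lem:k subtree size} the spanned subtree has $2p-2$ edges, giving $\Prb\{W_p = 2p-2\} > 0$ with $1 \le p < n$, hence $p \le m$. Therefore $m = p$.

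The second step exhibits an up-split sequence in the support with splitting index exactly $m$ and then argues that no up-split sequence in the support can do better. Relabelling if necessary so that $\#\LL(\TT') = p = m$, order $\LL(\TT)$ by a planar order as in \Cref{ex:up-split construction} with the $m$ leaves of $\TT'$ listed first; the resulting sequence $s = (0, s_2, \ldots, s_n)$ is up-split and $s_m = 2m-2$. For any $k$ with $m < k < n$ one cannot have $s_k = 2k-2$, since that would give $\Prb\{W_k = 2k-2\} > 0$ and hence $k \le m$; thus the splitting index of $s$ equals $m$. On the other hand, for every up-split sequence $r$ with $\Prb\{(0,\cW_\TT) = r\} > 0$ the defining relation $r_{k_r} = 2k_r-2$ yields $\Prb\{W_{k_r} = 2k_r-2\} > 0$ and hence $k_r \le m$; so $m$ is the largest splitting index occurring among up-split sequences in the support. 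Since $\ll$ ranks a sequence with larger splitting index strictly lower, the minimal up-split sequence for $\TT$ has splitting index equal to this maximum, namely $m$, which is the claim.

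The only point requiring genuine work is the ``only if'' half of \Cref{lem:upsplit subtrees} --- that at the extremal index $m$ the equality $W_m = 2m-2$ is possible only through a clean split of the leaves between the two children of the root --- and this has already been established. Everything downstream is the same bookkeeping as in \Cref{cor:split subtrees} with the inequalities reversed, and it needs no combinatorial input beyond \Cref{lem:k subtree size}, \Cref{lem:upsplit subtrees}, and \Cref{ex:up-split construction}.
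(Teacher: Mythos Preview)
Your proof is correct and follows essentially the same approach as the paper, which simply states that \Cref{cor:upsplit subtrees} is proved ``in essentially the same manner'' as \Cref{cor:split subtrees}. You have carried out exactly this translation: showing $m = \#\LL(\TT') \vee \#\LL(\TT'')$ via \Cref{lem:upsplit subtrees} and \Cref{lem:k subtree size}, exhibiting an up-split sequence in the support with splitting index $m$ via the planar ordering of \Cref{ex:up-split construction}, bounding every splitting index in the support by $m$, and then invoking the definition of $\ll$ (which favors larger splitting indices) to conclude.
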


The following analogue of \Cref{prop:det by downsplit} 
for up-split sequences follows from
\Cref{lem:upsplit subtrees} and \Cref{cor:upsplit subtrees} 
in essentially the same manner that \Cref{prop:det by downsplit}
followed from \Cref{lem:split subtrees} and \Cref{cor:split subtrees}.

\begin{prop}{Let $s$ be the minimal up-split sequence for a rooted binary tree $\TT$. 
There is no other rooted binary tree for which $s$ is the minimal up-split sequence.}
\label{prop:det by upsplit}
\end{prop}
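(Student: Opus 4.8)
The plan is to mimic the proof of \Cref{prop:det by downsplit} essentially line for line, inducting on the number of leaves $n$ (equivalently, on the length of the up-split sequence). The one structural novelty is that the recursive splitting of an up-split sequence is not self-similar: by definition the prefix $s' := (s_1, \ldots, s_{k_s})$ is again an up-split sequence, whereas the shifted suffix $s'' := (s_{k_s + 1} - (2k_s - 1), \ldots, s_n - (2k_s - 1))$ is a \emph{down}-split sequence. Correspondingly, the induction will feed $s'$ back into the inductive hypothesis for rooted binary trees and will dispatch $s''$ to the \emph{already proved} \Cref{prop:det by downsplit} for marked $3$-valent trees; there is no circularity and no simultaneous induction is needed. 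The geometric picture behind the split is the one in \Cref{ex:up-split construction}: a rooted binary tree $\TT$ with root $o$ and root-subtrees $\TT'$, $\TT''$ is reassembled from the rooted binary tree $\TT'$ and the marked $3$-valent tree $(\TT'', o)$ (obtained by hanging a marked leaf $o$ off the root of $\TT''$) by adjoining an edge from $o$ to the root of $\TT'$, so that $o$ becomes the degree-two root of $\TT$.

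The steps, in order, are as follows. The base case $s = (0)$ corresponds to the one-leaf tree, so nothing is to be proved. For the inductive step, let $s$ be the minimal up-split sequence of a rooted binary tree $\TT$ as above, and let $k_s$ be the splitting index of $s$. By \Cref{cor:upsplit subtrees}, $k_s$ equals $m := \sup\{1 \le k < n : \Prb\{W_k = 2k - 2\} > 0\}$, a quantity depending only on the law of $\cW_\TT$; this is what makes the reconstruction below depend on $s$ alone, which is exactly what the ``no other tree'' conclusion requires. Since $s_{k_s} = 2 k_s - 2$, any leaf ordering $y_1, \ldots, y_n$ realizing $s$ satisfies $W_{k_s} = 2 k_s - 2 = 2m - 2$, so \Cref{lem:upsplit subtrees} forces the first $k_s$ of these leaves to be precisely the leaf set of one of $\TT'$, $\TT''$ (say $\TT'$, after renaming) and the remaining $n - k_s$ to be the leaf set of the other. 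Reading off the lengths along this ordering exactly as in \Cref{ex:up-split construction} gives $\Prb\{\cW_{\TT'} = s'\} > 0$ and $\Prb\{\cW_{(\TT'', o)} = s''\} > 0$.

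The crux is to upgrade these to the statements that $s'$ is the \emph{minimal} up-split sequence of $\TT'$ and $s''$ is the \emph{minimal} down-split sequence of $(\TT'', o)$. Were there an up-split sequence $\tilde s' \ll s'$ in the support of $\cW_{\TT'}$, then picking the leaves of $\TT'$ in an order realizing $\tilde s'$ followed by the leaves of $\TT''$ in an order realizing $s''$ (each of positive probability) would realize $\cW_\TT$ as the concatenation $c := (\tilde s', \, s'' + \overline{2 k_s - 1})$. By \Cref{lem:size of upsplit} the last entry of $\tilde s'$ is $2 k_s - 2$, so $k_s$ is one of the indices $k$ at which $c$ takes the value $2k - 2$, and since any up-split sequence in the support of $\cW_\TT$ has splitting index at most $m = k_s$ (its splitting index $k'$ makes $\Prb\{W_{k'} = 2k' - 2\} > 0$), the splitting index of $c$ is \emph{exactly} $k_s$; its two recursive pieces are then $\tilde s'$ and $s''$, so $c \ll s$ by the definition of $\ll$, contradicting the minimality of $s$. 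The parallel argument, now invoking the third clause of the definition of $\ll$ (which compares the suffixes in the down-split order $\prec$) and using \Cref{lem:size of downsplit} in place of \Cref{lem:size of upsplit}, shows $s''$ is minimal for $(\TT'', o)$. By the inductive hypothesis $\TT'$ is then determined by $s'$, by \Cref{prop:det by downsplit} the marked tree $(\TT'', o)$ is determined by $s''$, and since $s'$ and $s''$ are functions of $s$ alone, so is $\TT$ via the reassembly described above.

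I expect all of the friction to sit in the crux step, and more precisely in the claim that the concatenation $c$ is a genuine up-split sequence whose splitting index is exactly $k_s$ rather than some earlier index --- the exact analogue of the (essentially tacit) corresponding point in the proof of \Cref{prop:det by downsplit}, but made a little more delicate by the fact that for up-split sequences the splitting index is a \emph{supremum} and the order $\ll$ reverses the associated inequality. The way to settle it is to combine the two size lemmas \Cref{lem:size of upsplit} and \Cref{lem:size of downsplit} (which pin down exactly which coordinates of an up- or down-split sequence can equal $2k - 2$) with the supremum characterization of the splitting index in \Cref{cor:upsplit subtrees}. Once that bookkeeping is in place, the remainder is a mechanical transcription of the argument for \Cref{prop:det by downsplit}, with $\prec$ replaced by $\ll$ at the top level and \Cref{lem:split subtrees}, \Cref{cor:split subtrees} replaced by their up-split counterparts \Cref{lem:upsplit subtrees}, \Cref{cor:upsplit subtrees}.
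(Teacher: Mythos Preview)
Your proposal is correct and follows essentially the same approach as the paper, which merely asserts that the proof is analogous to that of \Cref{prop:det by downsplit} via \Cref{lem:upsplit subtrees} and \Cref{cor:upsplit subtrees}. One minor slip: where you write ``rather than some earlier index'' it should be ``some later index'' (for up-split sequences the splitting index is a supremum), though your actual argument correctly rules out indices $k > k_s$ using \Cref{cor:upsplit subtrees}.
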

%
%
%
%

Clearly, \Cref{prop:det by upsplit} completes the proof of \Cref{thrm:main kary1}. 
To establish \Cref{thrm:main kary2} in the case of $\cT$ 
a random rooted binary tree, we need only repeat the argument of 
the proof of \Cref{thrm:main kary2} given in \Cref{sec:first half of main proof} 
for $3$-valent trees.

\subsection{$(k+1)$-valent and rooted $k$-ary combinatorial trees}
\label{sec:kary}

The proof of the extension \Cref{thrm:main kary1} and \Cref{thrm:main kary2} 
to $(k+1)$-valent and rooted $k$-ary combinatorial trees for $k \geq 3$ 
is very similar to the $k = 2$ case
and involves the introduction of suitable notion of 
down-split and up-split sequences along with appropriate
total orders on these sets of sequences. 
The only difference is that both types of split sequences are now split into $k$ smaller sequences, instead of just two.  We leave the details to the reader.

%
%
%

\section{Open problems}
\label{sec:open probs}

The original conjecture Question~\ref{q:main} remains open in general, both for simple trees with arbitrary edge weights (not in general position), and for combinatorial trees. An even more general question is suggested by \Cref{thrm:main kary2}.

\begin{question}{Let $\cT$ be a random tree with probability distribution supported 
either on the set of simple trees with $n$ leaves and general edge weights or the set of combinatorial trees with $n$ leaves. 
Can the probability distribution of $\cT$ be determined uniquely from the joint probability distribution of 
the random length sequence $\cW_{\cT}$?}
\label{q:open general}
\end{question}

Even if the answer to Question~\ref{q:open general} is ``no'', the answer may
still be ``yes'' if the probability distribution of 
$\cT$ is known {\em a priori} to belong to
some particular family of probability distributions. There are,
of course, many families of
probability models for with random trees with $n$ leaves that
are described by a small number of parameters (for example,
conditioned Galton-Watson models, the various preferential attachment
models), and perhaps the value of these parameters can be determined
from the joint probability distribution of the random length sequence
of a random tree that is known {\em a priori} to be distributed according to a
member of one of these families.



\begin{question}{What
are the necessary and sufficient conditions on a vector
for there to be an edge-weighted tree $\TT$ such that
the vector is in the support of $\cW_\TT$?}
\end{question}

We remarked in the Introduction that the focus of this paper is superficially
similar to that in \cite{MR786484}, where the problem of reconstructing
a combinatorial tree from its number deck (the sizes of the subtrees in the forests
produced by deleting each vertex) was studied.  The lists of lists that
are the number deck of some combinatorial tree are characterized in \cite{MR846676}.

\begin{question}{Are there more parsimonious quantities derived from the
joint probability distribution of the random length sequence
that still carry a lot of information about $\TT$?
For example, how much information about $\TT$ is contained in the expectation 
$(\bE[W_2], \ldots, \bE[W_n])$ of the random length sequence
and is it possible to characterize those vectors which can arise as the
expectation of the random length sequence?}
\end{question}

\providecommand{\bysame}{\leavevmode\hbox to3em{\hrulefill}\thinspace}
\providecommand{\MR}{\relax\ifhmode\unskip\space\fi MR }
\providecommand{\MRhref}[2]{%
  \href{http://www.ams.org/mathscinet-getitem?mr=#1}{#2}
}
\providecommand{\href}[2]{#2}

\end{document}